\numberwithin{equation}{section}
\newtheorem{thm}{Theorem}[section]
\newtheorem{prop}[thm]{Proposition}
\newtheorem{conj}[thm]{Conjecture}
\newtheorem{lem}[thm]{Lemma}
\theoremstyle{definition}
\newtheorem{defin}[thm]{Definition}
\newtheorem{rmk}[thm]{Remark}
\newtheorem{ex}[thm]{Example}
\def\x{{\bf x}}
\def\y{{\bf y}}
\def\H{{\mathcal H}}
\newcommand{\meet}[2]{#1 \cap #2}
\newcommand{\join}[2]{\langle #1, #2 \rangle}
\begin{document}
\title{$Y$-meshes and generalized pentagram maps}
\author{Max Glick \and Pavlo Pylyavskyy}
\address{Department of Mathematics, University of Minnesota,
Minneapolis, MN 55455, USA}
\thanks{M. G. was partially supported by NSF grant DMS-1303482.  P. P. was partially supported by NSF grants DMS-1148634, DMS-1351590, and Sloan Fellowship.}
\keywords{pentagram map, discrete dynamical systems, cluster algebras}

\begin{abstract}
We introduce a rich family of generalizations of the pentagram map sharing the property that each generates an infinite configuration of points and lines with four points on each line.  These systems all have a description as $Y$-mutations in a cluster algebra and hence establish new connections between cluster theory and projective geometry.  Our framework incorporates many preexisting generalized pentagram maps due to M. Gekhtman, M. Shapiro, S. Tabachnikov, and A. Vainshtein and also B. Khesin and F. Soloviev.  In several of these cases a reduction to cluster dynamics was not previously known.%The reduction to cluster dynamics was not known for some of these systems an
\end{abstract}

\maketitle

\setcounter{tocdepth}{1}
\tableofcontents

\section{Introduction} \label{secintro}
The pentagram map, introduced by R. Schwartz \cite{S1} is a discrete dynamical system defined on the space of polygons in the projective plane.  Figure \ref{figT} gives an example of the pentagram map, which we denote $T$.  Although the underlying construction is simple, the resulting system has many amazing properties including the following:
\begin{itemize}
\item V. Ovsienko, Schwartz, and S. Tabachnikov \cite{OST1} established that the pentagram map is a completely integrable system.  Further work in this direction was conducted by those same authors \cite{OST2}, F. Soloviev \cite{So}, and M. Gekhtman, M. Shapiro, Tabachnikov, and A. Vainshtein \cite{GSTV}.
\item Schwartz \cite{S2} showed that the continuous limit of the pentagram map is the Boussineq equation.
\item The first author \cite{G} proved that the pentagram map can be described in certain coordinates as mutations in a cluster algebra.
\end{itemize}

\begin{figure} 
\subfigure[An application of the pentagram map with input $A$ and output $B=T(A)$.\label{figT}]{
\begin{pspicture}(6,5)
\rput(0,-.5){
  \pspolygon[linewidth=1.5pt, showpoints=true](1,2)(1,3)(2,4.5)(3,5)(4.5,4.5)(5,3.5)(4.5,2)(3.5,1)(2,1)
  \pspolygon[linestyle=dashed](1,2)(2,4.5)(4.5,4.5)(4.5,2)(2,1)(1,3)(3,5)(5,3.5)(3.5,1)
  \pspolygon[linewidth=1.5pt, showpoints=true](1.22,2.55)(1.67,3.67)(2.5,4.5)(3.67,4.5)(4.5,3.87)(4.5,2.67)(3.97,1.79)(2.75,1.3)(1.62,1.75)
  %\uput[ur](3.75,4.75){$A$}
  %\uput[d](3,4.5){$B$}
	\uput[dl](1,2){$A_1$}
	\uput[ul](1,3){$A_2$}
	\uput[ul](2,4.5){$A_3$}
	\uput[u](3,5){$A_4$}
	\uput[u](4.5,4.5){$A_5$}
	\uput[r](5,3.5){$A_6$}
	\uput[r](4.5,2){$A_7$}
	\uput[dr](3.5,1){$A_8$}
	\uput[d](2,1){$A_9$}
	\uput[r](1.22,2.55){$B_1$}
	\uput[dr](1.67,3.67){$B_2$}
	\uput[d](2.5,4.5){$B_3$}
	\uput[d](3.67,4.5){$B_4$}
	\uput[dl](4.5,3.87){$B_5$}
	\uput[l](4.5,2.67){$B_6$}
	\uput[ul](3.97,1.79){$B_7$}
	\uput[u](2.75,1.3){$B_8$}
	\uput[ur](1.62,1.75){$B_9$}
}
\end{pspicture}
}
\hfill
\subfigure[Part of a $Y$-mesh produced by the pentagram map.\label{figmesh}]{
%\psset{unit=.8cm}
\begin{pspicture}(5,5)
\rput(-.5,-.5){
  % Original Polygon
	\pspolygon[showpoints=true](1,2)(1,3)(2,4.5)(3,5)(4.5,4.5)(5,3.5)(4.5,2)(3.5,1)(2,1)
 
 % \pspolygon(1,2)(1,3)(2,4.5)(3,5)(4.5,4.5)(5,3.5)(4.5,2)(3.5,1)(2,1)
  \pspolygon(1,2)(2,4.5)(4.5,4.5)(4.5,2)(2,1)(1,3)(3,5)(5,3.5)(3.5,1)
  \pspolygon[showpoints=true](1.22,2.56)(1.67,3.67)(2.5,4.5)(3.67,4.5)(4.5,3.87)(4.5,2.67)(3.97,1.79)(2.75,1.3)(1.62,1.75)
	
%	\pspolygon(1.22,2.56)(1.67,3.67)(2.5,4.5)(3.67,4.5)(4.5,3.87)(4.5,2.67)(3.97,1.79)(2.75,1.3)(1.62,1.75)
  \pspolygon(1.22,2.56)(2.5,4.5)(4.5,3.87)(3.97,1.79)(1.62,1.75)(1.67,3.67)(3.67,4.5)(4.5,2.67)(2.75,1.3)
  \pspolygon[showpoints=true](1.64,2.22)(1.66,3.22)(2.06,3.83)(3.17,4.29)(3.86,4.08)(4.30,3.10)(4.12,2.37)(3.36,1.78)(2.19,1.76)
 
%  \pspolygon(1.64,2.22)(1.66,3.22)(2.06,3.83)(3.17,4.29)(3.86,4.08)(4.30,3.10)(4.12,2.37)(3.36,1.78)(2.19,1.76)
  \pspolygon(1.64,2.22)(2.06,3.83)(3.86,4.08)(4.12,2.37)(2.19,1.76)(1.66,3.22)(3.17,4.29)(4.30,3.1)(3.36,1.78)
  \pspolygon[showpoints=true](1.8,2.83)(1.95,3.43)(2.63,3.91)(3.42,4.02)(3.95,3.47)(4.06,2.76)(3.69,2.23)(2.75,1.93)(2.06,2.11)
}
\end{pspicture}
}
\caption{}
\label{figTmesh}
\end{figure}

The pentagram map has been generalized and modified in many ways with the hope that some or all of these properties continue to hold.  Work of B. Khesin and Soloviev \cite{KS1,KS2,KS3}, Gekhtman, Shapiro, Tabachnikov and Vainshtein \cite{GSTV} and G. Mari Beffa \cite{M1,M2,M3} all pursue this idea.  Of the three listed properties, it seems that a connection to cluster algebras is the most resistant to being pushed forward, with only \cite{GSTV} achieving a generalization of the cluster description of $T$.  

We propose a new family of generalized pentagram maps, all of which have a description in terms of cluster algebras.  Our family includes the higher pentagram maps of Gekhtman et al. \cite{GSTV}, some of the maps introduced by Khesin and Soloviev \cite{KS1,KS2}, as well as a wide variety of new systems.  Already for the systems of Khesin and Soloviev, the cluster structure is new.

\begin{rmk}
The cluster algebra associated to each of our systems has a quiver that can be embedded on a torus with alternating orientations around each vertex (see Section \ref{seclift}).  Therefore, the techniques of A. Goncharov and R. Kenyon \cite{GK} or Gekhtman et al. \cite{GSTV} should apply, with consequences for Arnold-Liouville integrability of the systems.  We choose not to pursue this direction in this paper.
\end{rmk}

\smallskip
\centerline{------------}
\smallskip

The pentagram map relates to cluster algebras by way of certain projectively natural coordinates on the space of polygons.  Specifically, if $A$ is a polygon and $P_{i,j}$ is the $i$th vertex of $T^j(A)$ (using the indexing method suggested by Figure \ref{figT}) then
\begin{displaymath}
P_{i,j}, P_{i+2,j}, P_{i,j+1}, P_{i+1,j+1}
\end{displaymath}
are collinear for all $i,j \in \mathbb{Z}$ (see Figure \ref{figmesh}).  The cross ratios of such quadruples play the role of $y$-variables in the associated cluster algebra.  To obtain more systems admitting a cluster description, we reverse engineer the property of producing collinear points in a specified pattern.

\begin{defin} \label{defpin}
Let $a,b,c,d \in \mathbb{Z}^2$ be distinct and assume $a_2 \leq b_2 \leq c_2 \leq d_2$.  Say that $S = \{a,b,c,d\}$ is a $Y$-pin if $b_2 < c_2$ and the vectors $b-a,c-a,d-a$ generate all of $\mathbb{Z}^2$.
When $S$ is a $Y$-pin, we will always assume its elements are called $a,b,c,d$ and that $a_2 \leq b_2 < c_2 \leq d_2$.
\end{defin}

\begin{rmk}
If $a_2 < b_2 < c_2 < d_2$ then there is no ambiguity in determining $a,b,c,d$ given $S$.  If $a_2=b_2$ or $c_2=d_2$ (or both) we allow any choice of $a,b,c,d$ satisfying $a_2 \leq b_2 < c_2 \leq d_2$.  The corresponding dynamical system we define will not depend on the choice.
\end{rmk}

\begin{defin} \label{defmesh}
Let $S = \{a,b,c,d\}$ be a $Y$-pin and suppose $D \geq 2$.  A \emph{$Y$-mesh} of type $S$ and dimension $D$ is a grid of points $P_{i,j}$ and lines $L_{i,j}$ in $\mathbb{RP}^D$ with $i,j \in \mathbb{Z}$ which together span all of $\mathbb{RP}^D$ and such that
\begin{itemize}
\item $P_{r+a}, P_{r+b}, P_{r+c}, P_{r+d}$ all lie on $L_r$ and are distinct for all $r \in \mathbb{Z}^2$
\item $L_{r-a}, L_{r-b}, L_{r-c}, L_{r-d}$ (all of which contain $P_r$) are distinct for all $r \in \mathbb{Z}^2$.
\end{itemize}
\end{defin}
The points determine the lines (for example, via $L_r = \join{P_{r+a}}{P_{r+b}}$) so we usually consider only the points.

\begin{rmk}
We note that the definition of a $Y$-mesh is similar in spirit to that of a $Q$-net given by A. Bobenko and Y. Suris \cite{BS}.  In the former certain quadruples of points are assumed to be collinear while in the latter certain quadruples are assumed to be coplanar.  We are not aware of any direct connections between the two theories.
\end{rmk}

Given four points $x_1,x_2,x_3,x_4 \in \mathbb{RP}^1 = \mathbb{R} \cup \{\infty\}$, their \emph{cross ratio} is defined to be
\begin{displaymath}
[x_1,x_2,x_3,x_4] = \frac{(x_1-x_2)(x_3-x_4)}{(x_2-x_3)(x_4-x_1)}.
\end{displaymath}
The cross ratio is invariant under projective transformations.  Given a $Y$-mesh $P = (P_{i,j})_{i,j \in \mathbb{Z}}$ of type $S=\{a,b,c,d\}$, let
\begin{equation} \label{eqdefy}
y_r(P) = -[P_{r+a}, P_{r+c}, P_{r+b}, P_{r+d}]
\end{equation}
for all $r \in \mathbb{Z}^2$.

\begin{thm} \label{thmcluster}
Fix a $Y$-mesh of type $S = \{a,b,c,d\}$ and let $y_r = y_r(P)$.  Then
\begin{equation} \label{eqmain}
y_{r+a+b}y_{r+c+d} = \frac{(1+y_{r+a+c})(1+y_{r+b+d})}{(1+y_{r+a+d}^{-1})(1+y_{r+b+c}^{-1})}
\end{equation}
for all $r \in \mathbb{Z}^2$.  Moreover, there exists a quiver $Q_S$ and a sequence of mutations so that the $y$-variables transform according to \eqref{eqmain}.
\end{thm}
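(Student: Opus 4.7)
The plan is to split Theorem \ref{thmcluster} into its geometric half (the identity \eqref{eqmain}) and its combinatorial half (the existence of $Q_S$ and the mutation sequence), and attack them in that order.

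For the geometric half, fix $r \in \mathbb{Z}^2$ and set $s_0 = a+b+c+d$. I would single out the four \emph{central} points $Q_k := P_{r+s_0-k}$ for $k \in \{a,b,c,d\}$. From the $Y$-mesh axioms one sees directly that each of the six lines $L_{r+i+j}$ appearing in \eqref{eqmain} contains exactly the two central points $Q_m, Q_n$ with $\{m,n\} = \{a,b,c,d\}\setminus\{i,j\}$, together with two ``extras'' $E^{ij}_i := P_{r+2i+j}$ and $E^{ij}_j := P_{r+i+2j}$; and further that, for each $k \in \{a,b,c,d\}$, the three extras $E^{km}_k$ (for $m \ne k$) are collinear on the auxiliary line $L_{r+2k}$. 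Thus $L_{r+2k}$ functions as a Menelaus transversal to the triangle with vertices $Q_m$ ($m \ne k$) and sides $L_{r+k+m}$. The four auxiliary lines give four Menelaus identities among the twelve extras.

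After restricting to the (at most $3$-dimensional) projective subspace spanned by the $Q_k$ and normalizing these to the vertices of a standard simplex, each auxiliary line $L_{r+2k}$ is determined by two parameters, so the twelve extras are recorded by eight free parameters modulo the four Menelaus constraints. Substituting into \eqref{eqdefy} yields an explicit rational expression for each of the six $y_{r+i+j}$; the formulas are not uniform in $\{i,j\}$, because the two central points on $L_{r+i+j}$ occupy differing slots of the cross-ratio depending on which of $a,b,c,d$ lie in $\{i,j\}$. Plugging the six expressions into \eqref{eqmain} reduces the identity to a polynomial equation that follows from the four Menelaus relations, establishing the first half of the theorem.

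For the combinatorial half I would build $Q_S$ on the vertex set $\mathbb{Z}^2$, place $y_r$ at the vertex indexed by $r$, and declare arrow multiplicities depending only on the displacement and on the pin $S$. The shape of \eqref{eqmain}---four neighbor factors with exponents $\pm 1$ flanking two ``mutating'' variables $y_{r+a+b}$ and $y_{r+c+d}$---pins the local structure down to a toric, alternating-orientation quiver, as anticipated in the remark after the introduction; a single $y$-mutation at an appropriately chosen vertex, carried out via the standard Fomin--Zelevinsky rule, then reproduces \eqref{eqmain} verbatim. The chief obstacle is the projective verification: while Menelaus supplies exactly the right codimension to force one relation among the six cross-ratios, the uneven placement of central points within \eqref{eqdefy} prevents \eqref{eqmain} from emerging as a symmetric combination of the four transversal identities, so careful tracking of orderings and signs is needed to recover the identity in its precise stated form.
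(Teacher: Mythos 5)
Your geometric half is built on the same configuration the paper uses: the four points $P_{r+s_0-k}$ and the four transversal lines $L_{r+2k}$, with each of the six lines $L_{r+i+j}$ carrying two of the former and two ``extras'' lying on the latter. The paper's Proposition \ref{propmain} first turns \eqref{eqmain} into the statement that six cross ratios multiply to $1$, using $1-[x_1,x_2,x_3,x_4]=[x_1,x_3,x_2,x_4]$ and $[x_1,x_2,x_3,x_4]^{-1}=[x_1,x_4,x_3,x_2]$ to absorb the $1+y$ and $1+y^{-1}$ factors, and then proves that product identity by Lemma \ref{lemmain}, a coordinate-free cancellation of the signed distances $d(P_i,L_j)$. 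Your alternative route --- four Menelaus identities, one per transversal, whose product regroups into the six cross ratios --- is exactly how the paper argues the one-dimensional analogue in Section \ref{sec1d}, so it is viable; but your plan to finish by normalizing the central points to a simplex and checking a polynomial identity in eight parameters is much heavier than either, and the ``careful tracking of orderings and signs'' you defer is precisely the rewriting step that constitutes most of Proposition \ref{propmain}, not a routine afterthought.

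The genuine gap is in the combinatorial half. A single $Y$-seed mutation at a vertex $v$, by the rule \eqref{eqmuy}, sends $y_v$ to $y_v^{-1}$ and multiplies each neighbor by a factor $(1+y_v)^{\pm 1}$; it never produces a relation of the shape $y_vy_v'=\prod(1+y_w)/\prod(1+y_w^{-1})$. So your claim that a single $y$-mutation at a well-chosen vertex ``reproduces \eqref{eqmain} verbatim'' fails. The identity \eqref{eqmain} emerges only over a full period of mutations: the new variable is born as $1/y_{r+a+b}$ and then accumulates the four factors $(1+y_{r+a+c})$, $(1+y_{r+b+d})$, $(1+y_{r+a+d}^{-1})^{-1}$, $(1+y_{r+b+c}^{-1})^{-1}$ one at a time, each contributed when the corresponding neighboring vertex is itself mutated. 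Making this coherent requires (i) a quiver that is period one, i.e.\ returns to a shift of itself after a whole row is mutated, so that the accumulation repeats identically forever, together with the bookkeeping of the intermediate variables $y^{(k)}$; and (ii) a proof that a period-one quiver with the prescribed arrows out of the origin to $c-a$ and $d-b$ and into it from $c-b$ and $d-a$ actually exists --- the correction arrows in the proof of Proposition \ref{propQexists} are needed exactly to restore periodicity after mutation, and their construction uses the symmetry $m_v=m_{(i_0,l)-v}$. Both points are absent from your sketch; they are the substance of Section \ref{secquiver} and of Propositions \ref{propexchange} and \ref{propQexists}.
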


The second part of Theorem \ref{thmcluster}, namely the construction of the desired quiver, can be seen as a special case of an extension of work of A. Fordy and R. Marsh \cite{FM} that we develop.  To give a sense of the types of quivers that arise, Figure \ref{figquiver} contains a small example of $Q_S$, specifically for the $Y$-pin $S= \{(-1,0),(1,0),(0,1),(0,2)\}$.  We will see that this $S$ corresponds to the short diagonal hyperplane map in three dimensions first studied by Khesin and Soloviev \cite{KS1} and Mari Beffa \cite{M1}.

\begin{figure}
\begin{pspicture}(-2,.8)(11,3.2)
\cnode(1,1){.1}{v11}
\cnode(2,1){.1}{v21}
\cnode(3,1){.1}{v31}
\cnode(4,1){.1}{v41}
\cnode(5,1){.1}{v51}
\cnode(6,1){.1}{v61}
\cnode(7,1){.1}{v71}
\cnode(8,1){.1}{v81}
\cnode(1,2){.1}{v12}
\cnode(2,2){.1}{v22}
\cnode(3,2){.1}{v32}
\cnode(4,2){.1}{v42}
\cnode(5,2){.1}{v52}
\cnode(6,2){.1}{v62}
\cnode(7,2){.1}{v72}
\cnode(8,2){.1}{v82}
\cnode(1,3){.1}{v13}
\cnode(2,3){.1}{v23}
\cnode(3,3){.1}{v33}
\cnode(4,3){.1}{v43}
\cnode(5,3){.1}{v53}
\cnode(6,3){.1}{v63}
\cnode(7,3){.1}{v73}
\cnode(8,3){.1}{v83}

\psset{arrowsize=5pt}
\psset{arrowinset=0}

\ncline{->}{v11}{v22}
\ncline{->}{v21}{v32}
\ncline{->}{v31}{v42}
\ncline{->}{v41}{v52}
\ncline{->}{v51}{v62}
\ncline{->}{v61}{v72}
\ncline{->}{v71}{v82}

\ncline{->}{v12}{v23}
\ncline{->}{v22}{v33}
\ncline{->}{v32}{v43}
\ncline{->}{v42}{v53}
\ncline{->}{v52}{v63}
\ncline{->}{v62}{v73}
\ncline{->}{v72}{v83}

\ncline{->}{v12}{v21}
\ncline{->}{v22}{v31}
\ncline{->}{v32}{v41}
\ncline{->}{v42}{v51}
\ncline{->}{v52}{v61}
\ncline{->}{v62}{v71}
\ncline{->}{v72}{v81}

\ncline{->}{v13}{v22}
\ncline{->}{v23}{v32}
\ncline{->}{v33}{v42}
\ncline{->}{v43}{v52}
\ncline{->}{v53}{v62}
\ncline{->}{v63}{v72}
\ncline{->}{v73}{v82}

\ncline{->}{v21}{v13}
\ncline{->}{v31}{v23}
\ncline{->}{v41}{v33}
\ncline{->}{v51}{v43}
\ncline{->}{v61}{v53}
\ncline{->}{v71}{v63}
\ncline{->}{v81}{v73}

\ncline{->}{v23}{v11}
\ncline{->}{v33}{v21}
\ncline{->}{v43}{v31}
\ncline{->}{v53}{v41}
\ncline{->}{v63}{v51}
\ncline{->}{v73}{v61}
\ncline{->}{v83}{v71}

\ncarc[arcangle= 15]{->}{v32}{v12}
\ncarc[arcangle=-15]{->}{v42}{v22}
\ncarc[arcangle= 15]{->}{v52}{v32}
\ncarc[arcangle=-15]{->}{v62}{v42}
\ncarc[arcangle= 15]{->}{v72}{v52}
\ncarc[arcangle=-15]{->}{v82}{v62}

\psdots(-1,2)(-.5,2)(0,2)
\psdots(9,2)(9.5,2)(10,2)
%\rput(9,2){$\ldots$}

\end{pspicture}
\caption{The quiver $Q_S$ for $S=\{(-1,0),(1,0),(0,1),(0,2)\}$.}
\label{figquiver}
\end{figure}
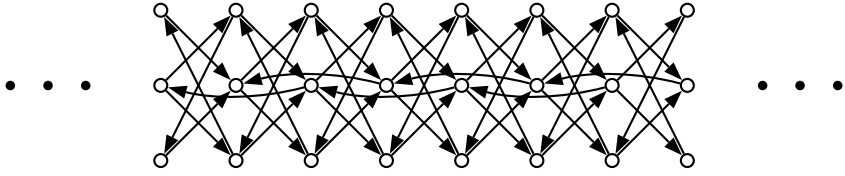

Theorem \ref{thmcluster} hints at the existence of a geometric dynamical system that builds a $Y$-mesh and that transforms in coordinates according to \eqref{eqmain}.  The pentagram map accomplishes this feat for $S=\{(0,0),(2,0),(0,1),(1,1)\}$ and $D=2$.  It is not hard to see for this $S$ that $Y$-meshes cannot exist in any higher dimensions.  Our main geometric result is a description of which pairs $S$ and $D$ can be obtained, with an explicit definition of the corresponding pentagram like map in all realizable cases.  First we need some definitions and notation.
%Theorem \ref{thmcluster} ignores the fundamental issue of whether $Y$-meshes actually exist for given type $S$ and dimension $D$.  In the case $S=\{(0,0),(2,0),(0,1),(1,1)\}$ and $D=2$, we have seen that such $Y$-meshes can be constructed by iterating the pentagram map.  It is not hard to show for the same $S$ that $D=2$ is the only case that is realizable.  Our main result is a description of which pairs $S$ and $D$ can be obtained, with an explicit definition of the corresponding pentagram like map in all realizable cases.  First we need some definitions and notation.

Let $\mathcal{U}_D$ be the space of infinite polygons $A$ with vertices $\ldots, A_{-1}, A_0, A_1, A_2, \ldots$ in $\mathbb{RP}^D$, considered modulo projective equivalence.  Let $\mathcal{U}_{D,m}$ denote the space of $m$-tuples $(A^{(1)}, \ldots, A^{(m)})$ of polygons, also modulo projective equivalence.  Let $\pi_j:\mathcal{U}_{D,m} \to \mathcal{U}_D$ denote the $j$th projection map.  For $A \in \mathcal{U}_{D,m}$, let $\langle A \rangle$ denote the span (i.e. affine hull) of the points of $A$.

Throughout we let $A^{(j)}$ denote the $j$th row of a grid $(P_{i,j})_{i,j \in \mathbb{Z}}$, that is $A_i^{(j)} = P_{i,j}$.  We consider birational maps $F$ of $\mathcal{U}_{D,m}$ (or subvarieties thereof) of the form $F(A^{(1)},\ldots, A^{(m)}) = (A^{(2)},\ldots, A^{(m+1)})$ and iterate $F$ and $F^{-1}$ to define $A^{(j)}$ for all $j \in \mathbb{Z}$.

\begin{defin}
Let $S= \{a,b,c,d\}$ be a $Y$-pin.  Let $X \subseteq \mathcal{U}_{D,m}$ and assume $\langle A \rangle = \mathbb{RP}^D$ for generic $A \in X$.  Say $F: X \to X$ as above is an \emph{$S$-map} if for all $r$ and generic $A \in X$, the points $P_{r+a}, P_{r+b}, P_{r+c}, P_{r+d}$ are collinear and distinct.  Call $m$ the \emph{order} of the $S$-map and $D$ the \emph{dimension}.
\end{defin}

By way of notation, let $D(S) = 2\alpha-1$ where $\alpha$ is the area of the convex hull of $S$ (note $D(S)$ is an integer by Pick's formula).

\begin{thm} \label{thmmain}
Let $S=\{a,b,c,d\}$ be a $Y$-pin.  
\begin{enumerate}
\item If $2 \leq D \leq D(S)$ then there exists an $S$-map in dimension $D$ of order $m=d_2-a_2$.
\item If $D > D(S)$ then no $Y$-mesh of type $S$ exists in dimension $D$.
\end{enumerate}
\end{thm}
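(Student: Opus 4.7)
My plan is to address parts (1) and (2) via complementary lifted-coordinate arguments, with the area invariant $\alpha$ entering through a combinatorial lemma about the propagation of collinearity.

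For part (2), the strategy is a rigidity statement: if $D > D(S)$, the $Y$-mesh conditions force the points into a proper projective subspace of $\mathbb{RP}^D$, contradicting the spanning hypothesis. Lift each $P_r$ to a vector $v_r \in \mathbb{R}^{D+1}$; collinearity of $P_{r+a}, P_{r+b}, P_{r+c}, P_{r+d}$ requires the four lifts to lie in a common $2$-plane. The pentagram case ($S = \{(0,0),(2,0),(0,1),(1,1)\}$, $D(S) = 2$) illustrates the mechanism: in $\mathbb{RP}^3$ the short diagonals of a generic polygon do not meet, so requiring the pentagram $Y$-mesh structure forces every four consecutive vertices to be coplanar, and an easy induction then forces the entire polygon into a projective plane. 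The general case should proceed analogously: the collinearity conditions, applied to suitable windows of vertices in a single row, cap its projective span at $D(S)$, and the mesh-propagating recursion keeps every other row inside the same subspace. The combinatorial heart is a lemma asserting that the lifts along a row satisfy a linear recursion whose fundamental initial data has exactly $2\alpha$ degrees of freedom, so the row spans at most a $2\alpha$-dimensional subspace of $\mathbb{R}^{D+1}$. Pick's formula $\alpha = I + B/2 - 1$ gives $2\alpha = 2I + B - 2$, which I expect to match the count of free lattice positions in one fundamental period after accounting for the three shift vectors $a-d, b-d, c-d$ (which generate $\mathbb{Z}^2$ by the $Y$-pin hypothesis).

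For part (1), I would construct the $S$-map explicitly. Given generic $(A^{(1)}, \ldots, A^{(m)}) \in \mathcal{U}_{D,m}$, each new vertex $P_{i,m+1}$ is defined in lifted coordinates as the unique solution of the linear system assembling all collinearity constraints from lines $L_r$ passing through it. The most informative constraint comes from the line with $r+d=(i,m+1)$, whose other three points already lie in rows $\leq m$; additional lines (offsets $c, b,$ or $a$ reaching $(i,m+1)$) contribute further rank. The hypothesis $D \leq D(S)$ ensures the system has rank exactly $D$ in projective coordinates, uniquely determining $P_{i,m+1}$. Consistency — that every remaining collinearity of the extended $Y$-mesh holds automatically — should follow by induction on the row index, invoking the same combinatorial lemma from part (2).

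The main obstacle is the combinatorial lemma itself, relating the recursion count to $2\alpha$ and so unifying the two parts of the theorem; without a clean statement of which lattice positions serve as independent initial data, neither bound is precise. A secondary subtlety arises in the degenerate cases $a_2 = b_2$ or $c_2 = d_2$, where certain lines carry two new vertices in the same row: handling these requires solving for all new vertices of row $m+1$ jointly as a single linear system in lifted coordinates, rather than one at a time, and I expect this to fall out of the same lifted framework without a separate case analysis.
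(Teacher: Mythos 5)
Your overall strategy is the right one, and it is essentially the paper's: lift to $\mathbb{R}^{D+1}$, identify a set of lattice positions whose lifted vectors can be chosen freely while all other vectors are forced into their span by the incidence relations, and show that this set has exactly $D(S)+1=2\alpha$ elements. Part (2) then follows because the whole mesh lies in a span of dimension at most $D(S)<D$, and part (1) follows because $D(S)+1>D$ free points can be chosen to span $\mathbb{RP}^D$. However, the "combinatorial lemma" you defer is not a technical afterthought --- it is the entire content of the paper's Section \ref{secrealize}, and as stated your version of it would fail. You propose that the free initial data sits in a single row and that the collinearity conditions "cap its projective span"; this is modeled on the horizontal (corrugated-polygon) case and does not survive for general $Y$-pins, where no (L1), (L2), or (P3) relation is contained in one row and a single row of $X_{D,S}$ is unconstrained. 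The paper instead constructs a \emph{filtration} $(f,g,H_t)$: a multi-row region $H_0\subseteq\mathbb{Z}\times\{1,\dots,m\}$ (a union of up to three parallelograms, with three separate case analyses depending on whether $\overline{ad}$ is a diagonal, a side, or the hull is a triangle), together with bijections $f,g$ assigning to each circuit the unique position it "determines" in each time direction, so that every arrangement is generated from arbitrary choices on $H_0$ (Propositions \ref{propfilter} and \ref{propH}). The identity $|H_0|=D(S)+1$ is then proved not by Pick's formula but by an averaging argument showing the lattice-point count of $H_t$ equals its area, which is twice the area of the convex hull of $S$. Without specifying $H_0$, the maps $f,g$, and the ordering conditions that make the generation procedure well defined, neither of your bounds is established.

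A secondary issue: in part (1) you invoke $D\le D(S)$ to guarantee that the linear system defining each new vertex has rank $D$. That is not where the hypothesis enters. The construction of $P_{r+c+d}$ as $\meet{\join{P_{r+a+c}}{P_{r+b+c}}}{\join{P_{r+a+d}}{P_{r+b+d}}}$ is well defined in every dimension once the (P3) coplanarity relations are imposed, and the propagation of all relations is Propositions \ref{propT2S} and \ref{propTS}, proved independently of $D(S)$. The paper is explicit that what remains for part (1) is only the spanning requirement in the definition of an $S$-map, and this is exactly where $|H_0|=D(S)+1>D$ is used: the $H_0$ positions are unconstrained, so a generic choice spans $\mathbb{RP}^D$. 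Your joint-linear-system treatment of the degenerate cases $a_2=b_2$ or $c_2=d_2$ is likewise unnecessary once the construction is phrased as a two-line intersection per new point.
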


The existence of an $S$-map of order $m=d_2-a_2$ implies that a $Y$-mesh $P$ can be determined by its points on $m$ consecutive rows alone.  The procedure to construct each new point will always have the same form: draw lines through two pairs of given points and take the intersection of these lines.  

The choice $m=d_2-a_2$, however, is not minimal.  We consider it a fundamental problem for given $S$ and $D \leq D(S)$ to determine the minimal order $m$ of an $S$-map and to describe the corresponding geometric construction.  We see in examples that as $D$ increases and $m$ decreases, the underlying constructions become extremely intricate.  It is on the surface a miracle that these constructions can be used to define coherent systems, let alone ones admitting a cluster description.  We invite the reader to skip to Section \ref{seczoo} to get a quick taste of the types of systems that fit in our framework.

\begin{rmk} \label{rmkequiv}
Say two $Y$-pins $S$ and $S'$ are equivalent if $S' = g(S)$ for some map $g: \mathbb{Z}^2 \to \mathbb{Z}^2$ of the form
\begin{displaymath}
g(i,j) = (\pm i + kj + i_0, j+j_0)
\end{displaymath}
with $i_0,j_0,k \in \mathbb{Z}$.  In this case there is an identification between $Y$-meshes $P_{i,j}$ of type $S$ and $P'_{i,j} = P_{g^{-1}(i,j)}$ of type $S'$.  Moreover, $g$ sends rows to rows and preserves the positive time (i.e. $j$) direction, so $S$-maps and $S'$-maps are essentially the same objects.  There are five degrees of freedom in choosing $S$ up to equivalence.  Alternatively, if $S$ and $S'$ are related by $(i,j) \mapsto (i,-j)$ then $S'$-maps are inverses of $S$-maps.
\end{rmk}

\begin{rmk}
Definition \ref{defpin} does not allow all four points of $S$ to be collinear, as then it would be impossible for $b-a, c-a, d-a$ to span $\mathbb{Z}^2$.  All the same, it would be interesting to consider this case, which would provide a cluster description of maps in the spirit of Schwartz's pentagram spirals \cite{S3} in terms of Gale-Robinson quivers.
\end{rmk}

The remainder of this paper is organized as follows.  The first half focuses on the geometry of $Y$-meshes.  Section \ref{secdefs} describes the $S$-maps in dimension two and higher.  In Section \ref{secex} we explain the connection between these $S$-maps and previously studied pentagram map generalizations.  Sections \ref{seclattice} and \ref{secrealize} prove Theorem \ref{thmmain} by identifying in which dimensions the $S$-maps can operate.  The problem of minimizing the order of $S$-maps is introduced in Section \ref{secorder}, and the minimal order is found in dimension two.  Section \ref{secfractal} defines certain fractal-shaped subsets of $\mathbb{Z}^2$ designed to better understand which collections of points of a $Y$-mesh lie in a common subspace of some given dimension.  Section \ref{seczoo} is a zoo of examples illustrating various aspects of the theory.

The second half of the paper explores the connection between $Y$-meshes and cluster algebras.  In Section \ref{secquiver} we introduce a two dimensional generalization of the periodic quivers defined by Fordy and Marsh \cite{FM}.  Section \ref{seccluster} identifies the quivers within this class relevant to our systems and also proves \eqref{eqmain} geometrically.  We discuss a one-dimensional version of $Y$-meshes in Section \ref{sec1d} generalizing the lower pentagram map \cite{GSTV}.  Although it is not immediately clear, the quivers corresponding to $S$-maps can be embedded on tori as is established in Section \ref{seclift}.  Finally, Section \ref{secratios} pushes beyond cross ratios to geometrically describe a larger class of $y$-variables of the cluster algebra.

\section{Definition of the $S$-maps} \label{secdefs}

\subsection{The $S$-map in the plane}
We begin by introducing the $S$-map in the plane, which will capture most of the essential components of the general definition.  Throughout the paper we let $\join{A}{B}$ denote the line passing through two points $A$ and $B$.

\begin{prop}
Let $S$ be a $Y$-pin with $D(S)>1$ and let $P$ be a $Y$-mesh of type $S$.  Then
\begin{equation} \label{eqF2}
P_{r+c+d} = \meet{\join{P_{r+a+c}}{P_{r+b+c}}}{\join{P_{r+a+d}}{P_{r+b+d}}}
\end{equation}
and
\begin{equation} \label{eqF2inv}
P_{r+a+b} = \meet{\join{P_{r+a+c}}{P_{r+a+d}}}{\join{P_{r+b+c}}{P_{r+b+d}}}.
\end{equation}
for all $r \in \mathbb{Z}^2$.
\end{prop}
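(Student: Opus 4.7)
The plan is to derive both equations as direct formal consequences of the two axioms in Definition~\ref{defmesh}, applied at carefully chosen indices. The first bullet (that $P_{s+a},P_{s+b},P_{s+c},P_{s+d}$ all lie on $L_s$ and are distinct) will supply the two lines appearing on the right-hand side of each equation and show that the desired point lies on both; the second bullet (that the four lines through $P_r$ are distinct) will ensure that those two lines meet in a single point.

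For equation \eqref{eqF2}, I would apply the first bullet at $s=r+c$: this places $P_{(r+c)+a}, P_{(r+c)+b}, P_{(r+c)+c}, P_{(r+c)+d}$ on a common line $L_{r+c}$, with the first two distinct, so they identify $L_{r+c}=\join{P_{r+a+c}}{P_{r+b+c}}$. The fourth of these points is exactly $P_{r+c+d}$, so $P_{r+c+d} \in L_{r+c}$. Symmetrically, applying the first bullet at $s=r+d$ yields $L_{r+d}=\join{P_{r+a+d}}{P_{r+b+d}}$ and $P_{r+c+d} \in L_{r+d}$. To see that $L_{r+c}$ and $L_{r+d}$ are distinct, rewrite them as $L_{(r+c+d)-d}$ and $L_{(r+c+d)-c}$ and invoke the second bullet at the point $P_{r+c+d}$: since $c\ne d$, these two lines are genuinely different. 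Two distinct lines in $\mathbb{RP}^D$ that share a point meet only at that point, so $\meet{L_{r+c}}{L_{r+d}} = \{P_{r+c+d}\}$, which is precisely \eqref{eqF2}.

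The proof of \eqref{eqF2inv} is entirely parallel with the roles of $\{a,b\}$ and $\{c,d\}$ swapped. Applying the first bullet at $s=r+a$ shows $L_{r+a} = \join{P_{r+a+c}}{P_{r+a+d}}$ and $P_{r+a+b} \in L_{r+a}$; applying it at $s=r+b$ gives $L_{r+b}=\join{P_{r+b+c}}{P_{r+b+d}}$ with $P_{r+a+b}\in L_{r+b}$. Viewing these two lines as $L_{(r+a+b)-b}$ and $L_{(r+a+b)-a}$ and invoking the second bullet at $P_{r+a+b}$, they are distinct, and the same projective-geometry observation finishes the argument.

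I do not anticipate a real obstacle here; the entire proof is a matter of bookkeeping the indices and citing each axiom at the correct base point. The condition $D(S)>1$ is not used in the manipulations themselves—it serves only to ensure that the hypothesis ``$P$ is a $Y$-mesh of type $S$'' is non-vacuous in the setting $D\ge 2$ required by Definition~\ref{defmesh}.
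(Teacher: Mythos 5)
Your proof is correct and follows exactly the paper's argument: identify $L_{r+c}=\join{P_{r+a+c}}{P_{r+b+c}}$ and $L_{r+d}=\join{P_{r+a+d}}{P_{r+b+d}}$ via the first axiom, note both contain $P_{r+c+d}$, and use the second axiom at $P_{r+c+d}$ to see the lines are distinct so their intersection is that single point. The only difference is that you spell out the index bookkeeping more explicitly than the paper does.
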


\begin{proof}
By definition of a $Y$-mesh, the points $P_{r+a+c}$, $P_{r+b+c}$ are distinct and both lie on the line $L_{r+c}$.  Also, $P_{r+a+d}$ and $P_{r+b+d}$ are distinct and both lie on $L_{r+d}$.  Lastly, the lines $L_{r+c}$ and $L_{r+d}$ are distinct and both contain $P_{r+c+d}$, proving \eqref{eqF2}.  The proof of \eqref{eqF2inv} is similar.
\end{proof}

In \eqref{eqF2}, the $P_{i,j}$ with smallest $j$-value is $P_{r+a+c}$ and the one with highest $j$-value is $P_{r+c+d}$.  Put another way, all five points in the equation lie within a window of $d_2-a_2+1$ rows of the grid.  As such, \eqref{eqF2} can be seen as a recurrence that inputs $d_2-a_2$ rows of points $P_{i,j}$ and determines the next row.  Similarly, \eqref{eqF2inv} can be used to determine the previous row.  It follows by iterating that a $Y$-mesh is determined by the points of $d_2-a_2$ consecutive rows.

Let $m=d_2-a_2$.  We impose relations on $\mathcal{U}_{2,m}$ of two types (L1) and (L2) defined as follows:  
\begin{itemize}
\item (L1) $P_{r+a},P_{r+b},P_{r+c}$ are collinear %and distinct
\item (L2) $P_{r+b},P_{r+c},P_{r+d}$ are collinear %and distinct
\end{itemize}
Note that a relation like $P_{r+a}, P_{r+b}, P_{r+d}$ collinear would never fit within $m$ consecutive rows since $m=d_2-a_2$.  Define $X_{2,S} \subseteq \mathcal{U}_{2,m}$ by all relevant (L1) and (L2) relations.  Specifically, $A=(A^{(1)},\ldots, A^{(m)}) \in X_{2,S}$ if and only if (L1) holds for all $r$ with $-a_2 < r_2 \leq m-c_2$ and (L2) holds for all $r$ with $-b_2 < r_2 \leq m-d_2$.  Recall here the identification $P_{i,j} = A^{(j)}_i$.

Define $A^{(m+1)} \in \mathcal{U}_{2}$ using \eqref{eqF2} (in the case $r_2=m+1-c_2-d_2$) and define $A^{(0)} \in \mathcal{U}_2$ using \eqref{eqF2inv} (in the case $r_2=-a_2-b_2$).  Write
\begin{align*}
F(A) &= (A^{(2)},\ldots, A^{(m+1)}) \\
G(A) &= (A^{(0)},\ldots, A^{(m-1)})
\end{align*}

\begin{prop} \label{propT2S}
Generically, $F(A) \in X_{2,S}$ and $G(A) \in X_{2,S}$.  Moreover $F,G: X_{2,S} \to X_{2,S}$ are inverse as rational maps.
\end{prop}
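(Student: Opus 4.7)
The plan is in two parts: first, check that $F$ and $G$ each map $X_{2,S}$ into itself; second, verify that they are mutually inverse as rational maps.

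Part 1. Write $B = F(A) = (A^{(2)},\ldots,A^{(m+1)})$. Every (L1) or (L2) relation required of $B$ corresponds, after shifting the row index up by one, to an (L1) or (L2) relation for $A$. All such relations at non-maximal row index are inherited directly from $A \in X_{2,S}$, so only the relations involving the newly built row $A^{(m+1)}$ need verification. For the new (L1) relation, one must check $P_{s+a}, P_{s+b}, P_{s+c}$ collinear at the unique $s$ for which $(s+c)_2 = m+1$. Setting $r = s - d$ in \eqref{eqF2}, the point $P_{s+c} = P_{r+c+d}$ is by definition the intersection of $\join{P_{r+a+c}}{P_{r+b+c}}$ and $\join{P_{r+a+d}}{P_{r+b+d}}$; the second of these lines equals $\join{P_{s+a}}{P_{s+b}}$, giving the required collinearity. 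The new (L2) relation follows symmetrically using $r = s - c$ and the other factor of the meet. The analogous check for $G(A) \in X_{2,S}$ uses \eqref{eqF2inv} in place of \eqref{eqF2}.

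Part 2. It suffices to verify $G \circ F = \mathrm{id}$, as $F \circ G = \mathrm{id}$ is symmetric. Applying $G$ to $B = F(A)$ produces a new bottom row via \eqref{eqF2inv}; translating through $B^{(j)} = A^{(j+1)}$, this reduces to applying \eqref{eqF2inv} to the extended $A$-data at parameter $r$ with $r_2 = -a_2 - b_2$. Reproducing $A^{(1)}$ amounts to verifying that \eqref{eqF2inv} holds at this $r$, i.e.\ that $P_{r+a+b}$ is the intersection of $\join{P_{r+a+c}}{P_{r+a+d}}$ and $\join{P_{r+b+c}}{P_{r+b+d}}$. The first collinearity follows from the (L1) and (L2) relations for $A$ at parameter $r+a$, which together force the four points $P_{r+2a}, P_{r+a+b}, P_{r+a+c}, P_{r+a+d}$ onto a common line. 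The second collinearity involves the point $P_{r+b+d}$, which lies in the freshly built row $A^{(m+1)}$; to certify collinearity there, I would invoke \eqref{eqF2} at parameter $s = r + b - c$, which places $P_{r+b+d}$ on $\join{P_{r+a+b}}{P_{r+2b}}$, and then combine with the (L1) and (L2) relations of $A$ at $r+b$ to push $P_{r+b+d}$ onto the common line $L_{r+b}$.

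The principal obstacle is the index bookkeeping in Part 2: the line $L_{r+b}$ certifying invertibility passes through a point on the freshly built row $A^{(m+1)}$, forcing one to reread \eqref{eqF2} in a shifted index frame from the one used to define that row. Tracking the precise ranges of valid (L1) and (L2) indices—especially in the edge cases $a_2 = b_2$ or $c_2 = d_2$ permitted by Definition \ref{defpin}—is also delicate. Once the bookkeeping is under control, the underlying geometric picture is transparent: the six points
\[
P_{r+a+b},\ P_{r+a+c},\ P_{r+a+d},\ P_{r+b+c},\ P_{r+b+d},\ P_{r+c+d}
\]
lie on a complete quadrilateral formed by the four lines $L_{r+a}, L_{r+b}, L_{r+c}, L_{r+d}$, and the opposite corners $P_{r+a+b}$ and $P_{r+c+d}$ are mutually determined by the remaining four, making the invertibility of $F$ by $G$ automatic.
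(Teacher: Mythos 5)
Your overall strategy coincides with the paper's: check only the (L1)/(L2) relations that involve the new row, and verify \eqref{eqF2inv} at $r_2=1-a_2-b_2$ by combining (L1), (L2), and the defining instance of \eqref{eqF2}. Your Part 2 is essentially the paper's argument (where the paper cites the new (L2) relation at $r+b$, you re-derive it from \eqref{eqF2} at $s=r+b-c$ together with (L1) at $r+b$, which amounts to the same thing).

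The one genuine misstep is in Part 1: the new (L2) relation does \emph{not} ``follow symmetrically using the other factor of the meet.'' Both lines appearing in \eqref{eqF2} pass through the points indexed by $a$ and $b$. At $r=s-d$ the second factor is $\join{P_{s+a}}{P_{s+b}}$ and yields the (L1) triple $\{P_{s+a},P_{s+b},P_{s+c}\}$; but at $r=s-c$ the relevant factor is again $\join{P_{s+a}}{P_{s+b}}$, and it yields only that $P_{s+a},P_{s+b},P_{s+d}$ are collinear --- which is not the (L2) triple $\{P_{s+b},P_{s+c},P_{s+d}\}$. To finish you must also invoke the (L1) relation at this same $s$ (valid because $s_2=m+1-d_2=1-a_2\le m+1-c_2$, so it is either an inherited relation or the one just established) and use that generically $P_{s+a}\ne P_{s+b}$, so that all four points $P_{s+a},P_{s+b},P_{s+c},P_{s+d}$ lie on one line and the (L2) triple is collinear. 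This is exactly the two-line argument the paper supplies, and it is the same ``two collinear triples sharing two generically distinct points'' move you use correctly in Part 2, so the gap is easily repaired; but as written your (L2) verification in Part 1 does not go through. A smaller quibble: in the edge cases $a_2=b_2$ or $c_2=d_2$, some of the relations you attribute to $A$ at the parameters $r+a$ and $r+b$ are in fact the newly established relations of the extended configuration $(A^{(1)},\dots,A^{(m+1)})$; you flag this as delicate, but the clean fix is simply to state that Part 2 works throughout with the extended configuration, whose full set of (L1)/(L2) relations was certified in Part 1.
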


\begin{proof}
First show $F(A) \in X_{2,S}$.  Since $(A^{(1)},\ldots, A^{(m)}) \in X_{2,S}$ it suffices to verify the (L1) and (L2) conditions that involve $A^{(m+1)}$.  Let $r$ satisfy $r_2=m+1-c_2$.  The formula \eqref{eqF2} shifted by $-d$ ensures that $P_{r+a}, P_{r+b}, P_{r+c}$ are collinear, confirming (P1).  Now assume $r_2 = m+1-d_2$.  Shifting instead by $-c$ shows $P_{r+a}, P_{r+b}, P_{r+d}$ are collinear.  In addition, we have $-a_2+1 = r_2 \leq m+1-c_2$ so $P_{r+a}, P_{r+b}, P_{r+c}$ are collinear, by a (P1) relation.  Generically $P_{r+a}$ and $P_{r+b}$ are distinct so these imply the (P2) relation $P_{r+b}, P_{r+c}, P_{r+d}$ collinar.  Therefore $F(A) \in X_{2,S}$.  A similar argument shows $G(A) \in X_{2,S}$.

Now let $A \in X_{2,S}$ and use $F$ to build $A^{(m+1)}$.  Showing $G(F(A)) = A$ amounts to verifying that \eqref{eqF2inv} holds for $r$ with $r_2 = 1-a_2-b_2$.  An (L2) relation shifted by $a$ shows that $P_{r+a+b}, P_{r+a+c}, P_{r+a+d}$ are collinear so
\begin{displaymath}
P_{r+a+b} \in \join{P_{r+a+c}}{P_{r+a+d}}.
\end{displaymath}
An (L1) relation shifted by $b$ shows that $P_{r+a+b}, P_{r+2b},P_{r+b+c}$ are collinear.  An (L2) relation shifted by $b$ shows $P_{r+2b}, P_{r+b+c}, P_{r+b+d}$ are collinear.  Generically, these imply
\begin{displaymath}
P_{r+a+b} \in \join{P_{r+b+c}}{P_{r+b+d}}.
\end{displaymath}
So \eqref{eqF2inv} does hold.  A similar argument shows $F(G(A)) = A$.
\end{proof}

We now fix notation $T_{2,S}: X_{2,S} \to X_{2,S}$ for the map $F$ constructed above.  Starting from $A \in X_{2,S}$, we can iterate $T_{2,S}$ and $T_{2,S}^{-1}$ to fill up the whole $P_{i,j}$ array, and all (L1) and (L2) relations will hold.  It follows that generically $P_{r+a}$, $P_{r+b}$, $P_{r+c}$, and $P_{r+d}$ are  collinear.

\begin{ex} \label{exF2}
Let $S=\{(-1,1), (1,2), (0,3), (0,4)\}$ and $D=2$.  Figure \ref{figF2} illustrates the $S$-map
\begin{displaymath}
T_{2,S} : X_{2,S} \to X_{2,S}
\end{displaymath}
A triple of enclosed points indicates that said points are collinear (as are any triple in the same relative position).  In this example, the (L1) relation is $P_{i-1,1}, P_{i+1,2}, P_{i,3}$ collinear for all $i$ and the (L2) relation is $P_{i+1,1}, P_{i,2}, P_{i,3}$ collinear for all $i$.  The space $X_{2,S} \subseteq \mathcal{U}_{2,3}$ consists of triples of infinite polygons satisfying these relations.  

The two line segments indicate the construction of a $P_{i,4}$, specifically the one marked by a $*$.  The $S$-map $T_{2,S}$ is defined by
\begin{displaymath}
P_{i,4} = \meet{\join{P_{i-1,1}}{P_{i+1,2}}}{\join{P_{i-1,2}}{P_{i+1,3}}}.
\end{displaymath}
\end{ex}

\begin{figure}
\begin{pspicture}(0,.5)(14,5.5)
\rput(1.5,1){
\pnode(-1,1){a}
\pnode(1,2){b}
\pnode(0,3){c}
\pnode(0,4){d}
\psdots(a)(b)(c)(d)
\uput[d](a){$a$}
\uput[d](b){$b$}
\uput[d](c){$c$}
\uput[d](d){$d$}
}
\rput(4.5,1){
\multirput(0,0)(1,0){7}{\multirput(0,0)(0,1){3}{\psdots(1,1)}}
\psaxes[axesstyle=none,ticks=none,showorigin=false,labelsep=0]{->}(7,3)
\uput[d](.3,0){$i$}
\uput[l](0,.3){$j$}
\psline(2,1)(4,2)
\psline(2,2)(4,3)
\psdots[dotstyle=asterisk](3,4)
\rput(6,0){
\psbezier(-1.18,.91)(-1.33,1.21)(.67,1.79)(.82,2.09)
\psbezier(.82,2.09)(.97,2.39)(-.38,2.91)(-.14,3.14)
\psbezier(-.14,3.14)(.09,3.38)(1.33,2.21)(1.18,1.91)
\psbezier(1.18,1.91)(1.03,1.61)(-1.03,.61)(-1.18,.91)
}
\rput(6,0){
\psbezier(1.14,.86)(.91,.62)(-.03,1.61)(-.18,1.91)
\psbezier(-.18,1.91)(-.33,2.21)(-.33,3.2 )(0   ,3.2 )
\psbezier(0   ,3.2 )(.33,3.2 )(.03,2.39)(.18,2.09)
\psbezier(.18,2.09)(.33,1.79)(1.38,1.09)(1.14,.86)

}

}
\end{pspicture}
\caption{The $Y$-pin $S = \{(-1,1), (1,2), (0,3), (0,4)\}$ (left) and the corresponding $S$-map (right). }
\label{figF2}
\end{figure}
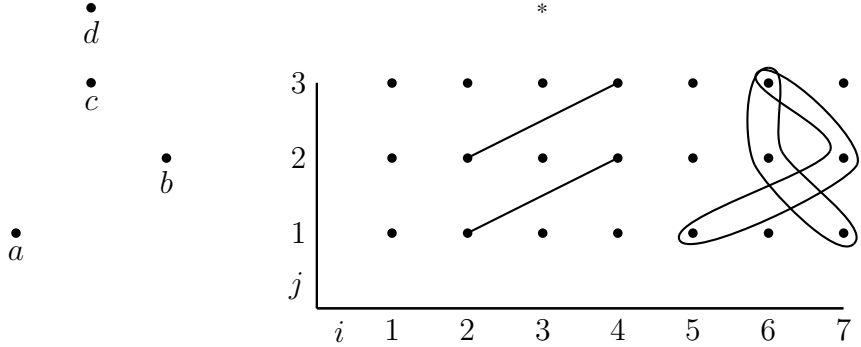

\subsection{Higher dimensional $S$-maps}
Now suppose $D > 2$, but keep $m=d_2-a_2$.  The identities \eqref{eqF2} and \eqref{eqF2inv} still follow from the definition of a $Y$-mesh.  The only new feature is a certain coplanarity condition (P3), which is needed in addition to (L1) and (L2) from before:
\begin{itemize}
\item (P3) $P_{r+a+c},P_{r+a+d},P_{r+b+c},P_{r+b+d}$ are coplanar. 
\end{itemize}

Define $X_{D,S} \subseteq \mathcal{U}_{D,m}$ by $A \in X_{D,S}$ if and only if the $P_{i,j} = A_i^{(j)}$ satisfy (L1), (L2), and (P3) for all relevant $r$.  Specifically, (L1) should hold for $-a_2 < r_2 \leq m-c_2$, (L2) should hold for $-b_2 < r_2 \leq m-d_2$, and (P3) should hold for $-a_2-c_2 < r_2 \leq m-b_2-d_2$.

The maps $F:X_{D,S} \to X_{D,S}$ and $G:X_{D,S} \to X_{D,S}$ are defined using \eqref{eqF2} and \eqref{eqF2inv} as before.  By a (P3) relation, the four points used in the construction are coplanar making it possible to join them in pairs and intersect the resulting lines.

\begin{prop} \label{propTS}
The maps $F$ and $G$ do in fact map $X_{D,S}$ to $X_{D,S}$, and they are inverse to each other.
\end{prop}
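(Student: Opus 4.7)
The plan is to follow the structure of the proof of Proposition \ref{propT2S}, upgrading each argument to handle the new coplanarity condition (P3). To show $F(A) \in X_{D,S}$ (the argument for $G$ being symmetric), the only relations that need to be checked involve the new row $A^{(m+1)}$: namely (L1) at $r_2 = m+1-c_2$, (L2) at $r_2 = m+1-d_2$, and (P3) at $r_2 = m+1-b_2-d_2$. The first two follow exactly as in the planar case: \eqref{eqF2} shifted by $-d$ produces the new (L1) directly, while \eqref{eqF2} shifted by $-c$ combined with an old (L1) at the same $r$ puts a fourth point on the line $\join{P_{s+a}}{P_{s+b}}$, yielding the new (L2). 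One must check, using the (P3) assumption at the original range, that both shifts of \eqref{eqF2} are legitimate, but this amounts only to range arithmetic.

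The genuinely new step is establishing (P3) at $r_2 = m+1-b_2-d_2$. The plan is to show that both lines $\ell_1 = \join{P_{r+a+c}}{P_{r+a+d}}$ and $\ell_2 = \join{P_{r+b+c}}{P_{r+b+d}}$ pass through the common point $P_{r+a+b}$, thereby forcing the four target points into the plane spanned by $\ell_1$ and $\ell_2$. The containment $P_{r+a+b} \in \ell_1$ is exactly (L2) at $r+a$, and $P_{r+a+b} \in \ell_2$ follows by combining (L1) at $r+b$ (collinearity of $P_{r+a+b}, P_{r+2b}, P_{r+b+c}$) with (L2) at $r+b$ (collinearity of $P_{r+2b}, P_{r+b+c}, P_{r+b+d}$); since the two triples share the two generically distinct points $P_{r+2b}$ and $P_{r+b+c}$, all four points of the two triples lie on a common line.

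The proof of the inverse property then follows the argument of Proposition \ref{propT2S} essentially verbatim: verify \eqref{eqF2inv} at $r_2 = 1-a_2-b_2$ by using (L2) at $r+a$ to place $P_{r+a+b}$ on $\join{P_{r+a+c}}{P_{r+a+d}}$ and (L1) together with (L2) at $r+b$ to place it on $\join{P_{r+b+c}}{P_{r+b+d}}$. The extra ingredient in higher dimensions is that \eqref{eqF2inv} only determines an intersection point when its four inputs are coplanar; this is (P3) at $r_2 = 1-a_2-b_2$ for the extended tuple, which after the shift accompanying the application of $F$ coincides with the new (P3) established in the first part, so no additional work is required.

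I expect the main obstacle to be careful bookkeeping of which shifted (L1) or (L2) relation lies in the original range for $A \in X_{D,S}$ and which instead coincides with one of the new relations just established. The boundary cases $a_2 = b_2$ and $c_2 = d_2$ permitted by Definition \ref{defpin} are delicate in this respect: for instance, when $a_2 = b_2$ the (L2) relation at $r+a$ invoked in the (P3) argument is itself a new relation rather than an old one. One must check that the order in which the new (L1), (L2), and (P3) are derived avoids any circular dependence, which reduces to a short case analysis once the ranges are written out explicitly.
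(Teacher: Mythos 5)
Your proposal is correct and follows essentially the same route as the paper: the (L1)/(L2) verifications are carried over from Proposition \ref{propT2S}, and the new (P3) relation at $r_2+b_2+d_2=m+1$ is obtained exactly as in the paper's proof, by exhibiting two lines through the common point $P_{r+a+b}$ — one containing $P_{r+a+c},P_{r+a+d}$ via (L2) at $r+a$, the other containing $P_{r+b+c},P_{r+b+d}$ via the collinearity of $P_{r+a+b},P_{r+2b},P_{r+b+c},P_{r+b+d}$ — so that the four target points span a common plane. Your extra attention to which invoked relations are ``old'' versus ``newly established'' (and to the boundary cases $a_2=b_2$, $c_2=d_2$) is bookkeeping the paper leaves implicit, but it does not change the argument.
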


\begin{proof}
Most of the proof is identical to that of Proposition \ref{propT2S}.  What remains is to verify that the maps preserve the (P3) conditions.  Let $A=(A^{(1)},\ldots A^{(m)}) \in X_{D,S}$ and $F(A) = (A^{(2)}, \ldots, A^{(m+1)})$.  The new (P3) relations to check are those with $r$ satisfying $r_2+b_2+d_2 = m+1$.  By the definition of $F$, 
\begin{displaymath}
P_{r+b+d} = \meet{\join{P_{r+a+b}}{P_{r+2b}}}{\join{P_{r+a+b-c+d}}{P_{r+2b-c+d}}}.
\end{displaymath}
From the above and an (L1) relation, the points $P_{r+a+b}, P_{r+2b}, P_{r+b+c}, P_{r+b+d}$ generically are distinct and lie on a line.  By an (L2) relation, $P_{r+a+b}, P_{r+a+c}, P_{r+a+d}$ lie on another line.  These two lines intersect (at $P_{r+a+b}$), so $P_{r+a+c}, P_{r+a+d}, P_{r+b+c}, P_{r+b+d}$ lie on a plane as desired.
\end{proof}

Let $T_{D,S}:X_{D,S} \to X_{D,S}$ denote the map $F$.  Apply $T_{D,S}$ forwards and backwards to build the full $P_{i,j}$ array.  It follows as usual from the (L1) and (L2) relations that
\begin{displaymath}
P_{r+a}, P_{r+b}, P_{r+c}, P_{r+d}
\end{displaymath}
are collinear for all $r \in \mathbb{Z}^2$.

We seem to have obtained an $S$-map in all dimensions $D$.  What is missing is the requirement that the resulting $Y$-meshes actually span all of $\mathbb{RP}^D$ rather than some proper subspace.  We return to this matter in Section \ref{secrealize}.

\section{Connection with other pentagram maps} \label{secex}
\begin{ex} \label{exGSTV}
Fix $p,q$ relatively prime with $1 \leq q < p$.  Let $S = \{(0,0),(p,0),(0,1),(q,1)\}$ and $D=2$.  Then $m=1-0=1$.  The (L1) and (L2) relations do not fit, so arbitrary polygons are allowed, i.e. $X_{2,S} = \mathcal{U}_{2}$.  Letting $A_i=P_{i,1}$ and $B_i = P_{i,2}$, equation \eqref{eqF2} (applied at $r=(i-q,0)$) implies $T_{2,S}(A)=B$ where
\begin{equation} \label{eqTpq}
B_i = \meet{\join{A_{i-q}}{A_{i+p-q}}}{\join{A_{i}}{A_{i+p}}}.
\end{equation}
In words, $B$ is formed by taking distance $p$ diagonals of $A$, and then intersecting diagonals that are a distance $q$ apart.  The case $p=2,q=1$ corresponds to the pentagram map.  

Note the convex hull of $S$ is a trapezoid of area $(p+q)/2$ so $D(S)=p+q-1$.  If $2 < D \leq p+q-1$ then $X_{D,S} \subseteq \mathcal{U}_{D}$ is defined by a (P3) relation
\begin{displaymath}
X_{D,S} = \{A \in \mathcal{U}_{D} : A_i, A_{i+q}, A_{i+p}, A_{i+p+q} \textrm{ coplanar for all $i$}\}.
\end{displaymath}
The map $T_{D,S}$ is defined by \eqref{eqTpq} as before.  In the case $q=1$, Gekhtman, Shapiro, Tabachnikov, and Vainshtein \cite{GSTV} extensively studied the corresponding systems ($T_{D,S}$ in our notation, $T_{p+1}$ in theirs).  They call these maps \emph{higher pentagram maps} and the polygons on which they act \emph{corrugated polygons}.
\end{ex}

\begin{ex} \label{exKS}
Let $S=\{(-1,1), (1,1), (0,2), (0,3)\}$ and $D=3$.  Figure \ref{figKS} illustrates the corresponding map.  Letting $A_i = P_{i,1}$ and $B_i=P_{i,2}$, we have $(A,B) \in X_{3,S}$ if and only if $A_{i-1},B_i,A_{i+1}$ collinear and $A_{i-1},A_{i+1},B_{i-1},B_{i+1}$ coplanar for all $i$.  The $S$-map is $T_{3,S}(A,B) = (B,C)$ where
\begin{displaymath}
C_i = \meet{\join{A_{i-1}}{A_{i+1}}}{\join{B_{i-1}}{B_{i+1}}}.
\end{displaymath}
Now, by assumption $B_{i-1} \in \join{A_{i-2}}{A_i}$ and $B_{i+1} \in \join{A_i}{A_{i+2}}$, so the line $\join{B_{i-1}}{B_{i+1}}$ is contained in the plane $\langle A_{i-2}, A_i, A_{i+2} \rangle$.  Therefore
\begin{displaymath}
C_i \in \meet{\join{A_{i-1}}{A_{i+1}}}{\langle A_{i-2}, A_i, A_{i+2} \rangle}.
\end{displaymath}
The right hand side is the intersection of a line and a plane in three space, so it is generically a point and
\begin{displaymath}
C_i = \meet{\join{A_{i-1}}{A_{i+1}}}{\langle A_{i-2}, A_i, A_{i+2} \rangle}.
\end{displaymath}
As such we can forget about $B$ and write $C = F(A)$ where $F:\mathcal{U}_{3} \to \mathcal{U}_{3}$ is as above.  The map $F$ is the \emph{short diagonal hyperplane map} studied by Khesin and Soloviev \cite{KS1}.
\end{ex}

\begin{figure}
\begin{pspicture}(0,.5)(14,4.5)
\rput(0,1){
\pnode(1,1){a}
\pnode(3,1){b}
\pnode(2,2){c}
\pnode(2,3){d}
\psdots(a)(b)(c)(d)
\uput[d](a){$a$}
\uput[d](b){$b$}
\uput[d](c){$c$}
\uput[d](d){$d$}
}
\rput(5,1){
\multirput(0,0)(1,0){7}{\multirput(0,0)(0,1){2}{\psdots(1,1)}}
\psaxes[axesstyle=none,ticks=none,showorigin=false,labelsep=0]{->}(7,2)
\uput[d](.3,0){$i$}
\uput[l](0,.3){$j$}
\psline(2,1)(4,1)
\psline(2,2)(4,2)
\psdots[dotstyle=asterisk](3,3)
\rput(6,0){
\psbezier(-1.14,.86)(-1.38,1.09)(-.33,2.2 )(0   ,2.2 )
\psbezier(0   ,2.2 )(.33,2.2 )(1.38,1.09)(1.14,.86)
\psbezier(1.14,.86)(.91,.62)(.33,1.8 )(0   ,1.8 )
\psbezier(0   ,1.8 )(-.33,1.8 )(-.91,.62)(-1.14,.86)
}
}
\end{pspicture}
\caption{The $S$-map for $S = \{(-1,1),(1,1),(0,2),(0,3)\}$.}
\label{figKS}
\end{figure}
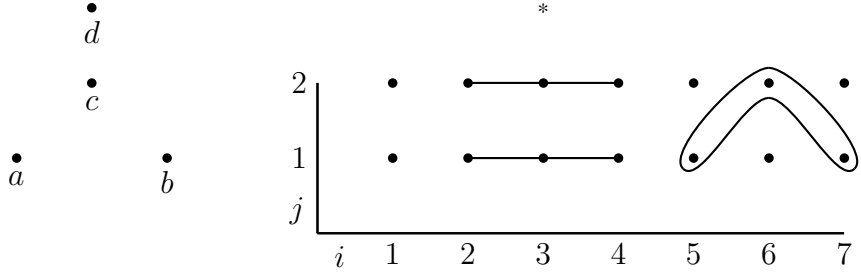

The map $F$ in the previous example is not itself an $S$-map.  More precisely, it does not trace out a full $Y$-mesh, but rather every second row of a $Y$-mesh.  A large family of $S$-maps are related in a similar way to so-called $(I,J)$-maps defined by Khesin and Soloviev \cite{KS2}.  Fix $D \geq 2$ and let $I=(s_1,\ldots, s_{D-1})$, $J=(t_1,\ldots, t_{D-1})$.  Given $A \in \mathcal{U}_{D}$ define the $I$-hyperplanes of $A$ to be
\begin{displaymath}
H_i = \langle A_i, A_{i+s_1}, A_{i+s_1+s_2}, \ldots, A_{i+s_1+\ldots + s_{D-1}} \rangle.
\end{displaymath}
Define a new polygon $B$ whose vertices are the $J$-intersections of the $H_i$
\begin{displaymath}
B_{i-i_0} = H_i \cap H_{i+t_1} \cap H_{i+t_1+t_2} \cap \ldots \cap H_{i+t_1+\ldots + t_{D-1}}.
\end{displaymath}
Everything is defined up to some uniform shift $i_0$ of indices.  The map $T_{I,J}:\mathcal{U}_{D} \to \mathcal{U}_{D}$ is $T_{I,J}(A)=B$.  

A basic property of these maps is that they are birational.  In fact, if $I=(s_1,\ldots, s_{D-1})$, $J=(t_1,\ldots, t_{D-1})$ then $T_{I,J}^{-1} = T_{J^*,I^*}$ where $I^*=(s_{D-1},\ldots, s_1)$ and $J^*=(t_{D-1},\ldots, t_1)$.  We can always take the $s_k$ and $t_k$ to be positive, but it will be convenient to allow negative values as well.  There is no problem with the definition provided the partial sums of $I$ are all distinct and similarly for $J$.

In some cases, the $(I,J)$-map which is usually defined as an intersection of $D$ hyperplanes can be expressed more simply as the intersection of two subspaces of complimentary dimension.
\begin{prop}
Suppose $I=(s,s,\ldots, s)$ and $J = (s,\ldots, s, t, s, \ldots s)$ where $\gcd(s,t)=1$ and the single $t$ is in position $k$ of $J$.  Then $B=T_{I,J}(A)$ is given by
\begin{equation} \label{eqIJconst}
\begin{split}
B_{i-i_0} = &\langle A_{i+(k-1)s}, A_{i+ks},\ldots, A_{i+(D-1)s} \rangle \\ 
&\cap \langle A_{i+(D-2)s+t}, A_{i+(D-1)s+t}, \ldots, A_{i+(D+k-2)s + t} \rangle.
\end{split}
\end{equation}
\end{prop}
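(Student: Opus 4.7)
The plan is to unpack the definition of $T_{I,J}$, group the $D$ hyperplanes whose intersection defines $B_{i-i_0}$ into two natural blocks determined by the position of the single ``jump'' $t$ in $J$, and show via a straightforward dimension count that each block intersects in the span of the $A$-points common to every hyperplane in that block.

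First I would list the hyperplane indices explicitly. With $J = (s,\ldots,s,t,s,\ldots,s)$ and $t$ in position $k$, the partial sums $0, t_1, t_1+t_2, \ldots, t_1+\cdots+t_{D-1}$ come out to
\begin{displaymath}
0,\ s,\ 2s,\ \ldots,\ (k-1)s,\ (k-1)s+t,\ ks+t,\ \ldots,\ (D-2)s+t.
\end{displaymath}
So $B_{i-i_0}$ is the intersection of the $k$ hyperplanes $H_{i+js}$ for $j=0,\ldots,k-1$ together with the $D-k$ hyperplanes $H_{i+(j-1)s+t}$ for $j=k,\ldots,D-1$.

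Second I would identify the common $A$-points in each block. Since $I=(s,\ldots,s)$, $H_{i+js} = \langle A_{i+js}, A_{i+(j+1)s}, \ldots, A_{i+(j+D-1)s}\rangle$, so the $A$-points lying in every $H_{i+js}$ for $j=0,\ldots,k-1$ are exactly those with index $i+ls$ for $l\in\{k-1,k,\ldots,D-1\}$, a list of $D-k+1$ points. Analogously, the $A$-points lying in every $H_{i+(j-1)s+t}$ for $j=k,\ldots,D-1$ are those with index $i+ls+t$ for $l\in\{D-2,D-1,\ldots,D+k-2\}$, a list of $k+1$ points; these are exactly the two spans appearing in \eqref{eqIJconst}.

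Third I would close the loop with a generic dimension count. In $\mathbb{RP}^D$ the intersection of $k$ hyperplanes generically has dimension $D-k$, while the span of $D-k+1$ generic points also has dimension $D-k$; since the span is always contained in the intersection, they must coincide generically. The same argument with roles swapped shows the second block of $D-k$ hyperplanes intersects in the $k$-dimensional span of its $k+1$ common points. Intersecting the two subspaces yields the formula \eqref{eqIJconst}. The only real thing to verify is genericity, namely that for generic $A$ the two listed spans attain their expected dimensions and the hyperplane intersections attain their expected codimensions; this is routine since the hypothesis $\gcd(s,t)=1$ together with the fact that $J$ has distinct partial sums ensures the relevant indices are distinct, so the corresponding $A$-points are in general position on a generic polygon. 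I do not foresee any serious obstacle beyond bookkeeping of indices.
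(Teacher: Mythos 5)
Your proposal is correct and follows essentially the same route as the paper: the paper also splits the $D$ hyperplanes into the block $H_i\cap\cdots\cap H_{i+(k-1)s}$ and the block $H_{i+(k-1)s+t}\cap\cdots\cap H_{i+(D-2)s+t}$, asserts that each equals the span of its common $A$-points, and concludes from the definition of $T_{I,J}$. The only difference is that the paper leaves the two span-equals-intersection identities as ``easy to see,'' whereas you supply the index bookkeeping and the generic dimension count justifying them.
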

\begin{proof}
Let $H_i = \langle A_i, A_{i+s}, A_{i+2s}, \ldots, A_{i+(D-1)s} \rangle$.  It is easy to see that 
\begin{displaymath}
H_i \cap H_{i+s} \cap \ldots \cap H_{i+(k-1)s} = \langle A_{i+(k-1)s}, A_{i+ks},\ldots, A_{i+(D-1)s} \rangle
\end{displaymath}
and
\begin{displaymath}
H_{i+(k-1)s+t} \cap H_{i+ks+t} \cap \ldots \cap H_{i+(D-2)s+t} = \langle A_{i+(D-2)s+t}, A_{i+(D-1)s+t}, \ldots, A_{i+(D+k-2)s + t} \rangle.
\end{displaymath}
The result follows from the definition of $T_{I,J}$.
\end{proof}

Say that a $Y$-pin $S$ is horizontal if $a_2=b_2$.  Applying a translation and by choice of $a$ we can assume $a_2=b_2=0$ and $0=a_1<b_1$.  Let $p=c_2$ and $q=d_2$.  In order to have $b-a,c-a,d-a$ span all of $\mathbb{Z}^2$, it must be the case that $p, q$ are relatively prime.  The connection to $(I,J)$-maps arises in dimension $p+q$, so assume $D(S) \geq p+q$.  Let $(P_{i,j})_{i,j\in\mathbb{Z}}$ be a $Y$-mesh of type $S$ and dimension $D=p+q$. 

\begin{prop} 
Given the above,
\begin{equation} \label{eqzontal}
%\begin{split} 
P_{qc+pd} \in \langle P_{pd}, P_{b+pd}, P_{2b+pd}, \ldots, P_{qb+pd} \rangle 
\cap \langle P_{qc}, P_{b+qc}, P_{2b+qc}, \ldots, P_{pb+qc} \rangle.
%P_{qc+pd} \in &\langle P_{qa+pd}, P_{(q-1)a+b+pd}, P_{(q-2)a+2b+pd}, \ldots, P_{qb+pd} \rangle \\
%&\cap \langle P_{pa+qc}, P_{(p-1)a+b+qc}, P_{(p-2)a+2b+qc}, \ldots, P_{pb+qc} \rangle.
%\end{split}
\end{equation}
\end{prop}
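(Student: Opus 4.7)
The plan is to prove the first containment,
\[
P_{qc+pd} \in \langle P_{pd}, P_{b+pd}, \ldots, P_{qb+pd}\rangle,
\]
by a double induction exploiting the collinearity axiom of a $Y$-mesh, and then obtain the second containment by re-running the identical argument with the roles of $c$ and $d$ (and correspondingly of $p$ and $q$) interchanged. This symmetry is legitimate because the axiom that $P_r, P_{r+b}, P_{r+c}, P_{r+d}$ all lie on $L_r$ is itself symmetric in $c$ and $d$, so no step of the argument ever appeals to the convention $p \leq q$.

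Write $V = \langle P_{pd}, P_{b+pd}, \ldots, P_{qb+pd}\rangle$. The central claim I intend to establish is that $P_{jc+pd+kb} \in V$ for every pair of integers $(j,k)$ with $0 \leq j \leq q$ and $0 \leq k \leq q-j$. The base case $j=0$ is immediate, since the points $P_{pd+kb}$ with $0 \leq k \leq q$ are the declared generators of $V$. For the inductive step, assume the claim at level $j$ and fix $k$ with $0 \leq k \leq q-j-1$. Apply the collinearity on $L_r$ at $r = jc+pd+kb$ (which is where we use $a=0$): the four points
\[
P_{jc+pd+kb},\quad P_{jc+pd+(k+1)b},\quad P_{(j+1)c+pd+kb},\quad P_{jc+(p+1)d+kb}
\]
lie on $L_r$ and are pairwise distinct by the $Y$-mesh axioms. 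The first two lie in $V$ by the inductive hypothesis; since they are distinct, they span the entire line $L_r$, and hence the third point $P_{(j+1)c+pd+kb}$ also belongs to $V$. Specializing to $j=q$, $k=0$ gives $P_{qc+pd} \in V$.

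The second containment follows by repeating the induction verbatim after swapping $c \leftrightarrow d$ and $p \leftrightarrow q$ throughout. I do not foresee any genuine obstacle. The only bookkeeping is to verify that the index range $0 \leq k \leq q-j-1$ used in the inductive step sits inside the previously established range $0 \leq k \leq q-j$ of the preceding level, which is forced by the shape of the argument. The dimension of the ambient projective space plays no role here, so the hypothesis $D=p+q$ is not used in this proposition; it becomes relevant only when one then wants to conclude, on dimensional grounds, that the two spans actually meet in a single point.
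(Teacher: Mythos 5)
Your proposal is correct and is essentially the paper's own argument: the paper defines the nested spans $V_k = \langle \{P_{\beta b + (q-k)c + pd} : 0 \leq \beta \leq k\}\rangle$ and proves $V_{k-1} \subseteq V_k$ by exactly your key step (two distinct points of one level span the line $L_r$, which contains the point one $c$-level up, using $a=0$), which is just a re-indexing of your induction on $j$. The second containment is likewise handled in the paper by the same symmetry in $c$ and $d$.
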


\begin{proof}
Let 
\begin{displaymath}
%V_k = \langle \{P_{\alpha a + \beta b + (q-k)c + pd} : \alpha, \beta \geq 0, \alpha+\beta=k\} \rangle.
V_k = \langle \{P_{\beta b + (q-k)c + pd} : 0 \leq \beta \leq k\} \rangle.
\end{displaymath}
Then $V_0 = P_{qc+pd}$ and $V_q$ is the first space in the intersection.  By induction, it suffices to show $V_{k-1} \subseteq V_k$ for $k=1,\ldots, q$.  Suppose $0 \leq \beta \leq k-1$.  The $Y$-mesh property applied for $r= \beta b + (q-k)c + pd$ shows
\begin{displaymath}
P_{\beta b + (q-(k-1))c + pd} \in \join{P_{(\beta b + (q-k)c + pd}}{P_{(\beta+1) b + (q-k)c + pd}} \subseteq V_k.
\end{displaymath}
Therefore $V_{k-1} \subseteq V_k$ as desired.  A similar argument shows
\begin{displaymath}
P_{qc+pd} \in \langle P_{qc}, P_{b+qc}, P_{2b+qc}, \ldots, P_{pb+qc} \rangle.
\end{displaymath}
\end{proof}

Note $qc+pd$ has $j=qc_2+pd_2=2pq$ while all the points on the right hand side of \eqref{eqzontal} have $j=qp$.  Moreover, if these $P_{i,pq}$ are in general position then the right hand side is the intersection of a $q$-space and a $p$-space in $\mathbb{RP}^{p+q}$ and hence the intersection is a single point, namely $P_{qc+pd}$.  We conjecture that under the current assumptions, a single row of the $Y$-mesh is arbitrary, and hence generically $P_{qc+pd}$ can be determined in this manner.  

\begin{conj} \label{conjcover}
Suppose $S$ is horizontal as above with $D(S) \geq p+q$.  Then the map $\pi_1:X_{p+q,S} \to \mathcal{U}_{p+q}$ has dense image.  Restricting to twisted configurations, i.e. $A$ satisfying $A_{i+n}^{(j)} = \phi(A_i^{(j)})$ for some fixed $n \geq 1$ and projective transformation $\phi$, the map $\pi_1$ is finite-to-one.
\end{conj}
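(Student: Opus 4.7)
The plan is to prove both claims by reducing to a dimension count in the twisted setting, then verifying transversality at a carefully chosen base point.

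First I would restrict attention to twisted polygons of fixed period $n$, where all spaces involved are finite-dimensional algebraic varieties with $\dim \mathcal{U}_{p+q}^{\mathrm{twist},n} = n(p+q)$. Next I would compute the expected dimension of $X_{p+q,S}^{\mathrm{twist},n}$: parametrize the $q$-tuple of rows by $qn(p+q)$ coordinates and subtract the codimensions of the imposed relations. The (L1) relations hold for $r_2 \in \{1,\ldots,q-p\}$ and impose codimension $p+q-1$ each (collinearity of three points in $\mathbb{RP}^{p+q}$); the (P3) relations hold for $r_2 \in \{1-p,\ldots,0\}$ and impose codimension $p+q-2$ each (coplanarity of four points); the (L2) relations are vacuous under the horizontal hypothesis $a_2=b_2=0$, $d_2=m$. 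A direct computation yields the naive dimension
\[qn(p+q)-(q-p)n(p+q-1)-pn(p+q-2)=n(p+q),\]
matching the target. If this count is sharp and $\pi_1$ is dominant, then $\pi_1$ is a morphism between equidimensional varieties, giving finite-to-one and dense image simultaneously.

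To show sharpness I would exhibit an $A \in X_{p+q,S}^{\mathrm{twist},n}$ at which the defining constraints are transversal. A promising candidate is a Y-mesh whose initial row $A^{(1)}$ has vertices sampled from a rational normal curve $\gamma \subset \mathbb{RP}^{p+q}$, with later rows built from secant subspaces of $\gamma$. The generic-position behavior of points on $\gamma$, together with the predictable structure of its chord and secant flats, makes it feasible to produce rows for which the collinearity and coplanarity relations hold by construction, and to compute the Jacobian of the constraint system in coordinates pulled back from $\gamma$. Once transversality is verified at one such base point, $X_{p+q,S}^{\mathrm{twist},n}$ has the expected dimension $n(p+q)$, $\pi_1$ is dominant on the main component, and both conclusions of the conjecture follow; density of the image for non-twisted $\mathcal{U}_{p+q}$ then transfers by standard approximation of general polygons by twisted ones.

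The main obstacle is the transversality step itself: a brute-force Jacobian calculation is straightforward for small $p,q$ but becomes intricate as the density of (L1) and (P3) relations grows. As a fallback, I would exploit the connection to the $(I,J)$-maps of Khesin and Soloviev spelled out immediately before the conjecture: for horizontal $S$ in dimension $p+q$, one expects an inter-row dynamics governed by a specific $T_{I,J}$ with $I=(b_1,\ldots,b_1)$ (of length $p+q-1$) and $J$ obtained from $(p,q,b_1,c_1,d_1)$ in the spirit of Proposition 3.3, so that iteration produces successive rows of a Y-mesh of type $S$. Because $T_{I,J}^{-1}=T_{J^*,I^*}$ is itself birational, verifying that the iterates $A^{(k)}=T_{I,J}^{k-1}(A^{(1)})$ satisfy (L1), (L2), and (P3) would exhibit a birational section $\sigma$ of $\pi_1$, whose existence forces $\pi_1$ to be dominant with finite generic fibers; this verification in turn reduces to unpacking the hyperplane intersection \eqref{eqIJconst} defining $T_{I,J}$ and matching it against the long-range identity \eqref{eqzontal}, which captures the $pq$-fold iterate.
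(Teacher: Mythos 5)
First, a point of comparison: the paper does not prove this statement. It is stated as Conjecture \ref{conjcover}, and the authors only claim to have verified it for the handful of examples in Table \ref{tab3D}. So there is no proof in the paper to measure your argument against; it must stand on its own, and as written it does not.

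Your dimension count is correct arithmetic: with $m=q$ rows, $(q-p)n$ collinearity conditions of codimension $p+q-1$, $pn$ coplanarity conditions of codimension $p+q-2$, and vacuous (L2) conditions, the expected dimension of the twisted version of $X_{p+q,S}$ is indeed $n(p+q)$, matching the target. But two genuine gaps remain. The first is that the transversality verification is the entire content of the conjecture, and you only gesture at it: no configuration on a rational normal curve is actually constructed, no check that it satisfies (L1) and (P3), and no Jacobian is computed. Worse, even granting transversality of the defining equations at a base point, equidimensionality of source and target does not by itself make $\pi_1$ dominant or generically finite --- a map between equidimensional varieties can still collapse. You would additionally need to show that $d\pi_1$ restricted to the tangent space of $X_{p+q,S}$ at your base point is an isomorphism (equivalently, that the fiber of $\pi_1$ through that point is zero-dimensional there), and this separate computation is never set up. The second gap is that your fallback is circular and misreads the $(I,J)$-map connection: Proposition \ref{propzontal} is itself conditional on Conjecture \ref{conjcover}, and $T_{I,J}$ relates $A^{(j)}$ to $A^{(j+pq)}$ --- it advances $pq$ rows at a time, not one. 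Iterating it from a single polygon yields only every $pq$-th row of the putative $Y$-mesh and gives no way to reconstruct the $q$ consecutive rows that constitute an element of $X_{p+q,S}$; recovering those intermediate rows from a single row is precisely what the conjecture asserts, so no section of $\pi_1$ is produced this way.
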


\begin{prop} \label{propzontal}
Suppose $S$ is horizontal as above with $D(S) \geq p+q$.  Let $(P_{i,j})_{i,j\in\mathbb{Z}}$ be a $Y$-mesh of type $S$ and dimension $p+q$.  As usual, let $A^{(j)}$ be the polygon $\ldots, P_{0,j}, P_{1,j} \ldots$.  Assuming Conjecture \ref{conjcover},
\begin{displaymath}
A^{(j+pq)} = T_{I,J}(A^{(j)})
\end{displaymath}
where $I=(s,s,\ldots, s), J=(s,\ldots, s, t, s, \ldots, s)$, $s=b_1>0$, $t=qc_1-pd_1-(q-1)b_1$, and the unique $t$ of $J$ is in position $k=p$.
\end{prop}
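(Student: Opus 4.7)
The plan is to combine the preceding Proposition's description of $P_{qc+pd}$ as the intersection of two explicit affine spans with the earlier Proposition expressing $T_{I,J}$-maps of the specified $I,J$ shape as intersections of two complementary subspaces. After a translation argument and an index-matching computation, the identification of $T_{I,J}$ with the dynamics $A^{(j)} \mapsto A^{(j+pq)}$ emerges, with the precise value of $t$ falling out of the matching.

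First I would reduce to a single vertex at a single level. By the translational structure of the $Y$-mesh it suffices to take $j=pq$, and by applying the preceding Proposition with $r$ replaced by $r+(m,0)$ for arbitrary $m\in\mathbb{Z}$ one obtains a formula for each vertex $P_{(m,0)+qc+pd}$ of $A^{(2pq)}$ in terms of vertices of $A^{(pq)}$. Conjecture~\ref{conjcover} is what legitimizes viewing $A^{(2pq)}$ as a rational function of $A^{(pq)}$: since $\pi_1$ has dense image, any two rational formulas that agree on an open set of $A^{(pq)}$ define the same map.

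Next comes the index bookkeeping. With $D=p+q$, $k=p$, and $s=b_1$, the Proposition on $T_{I,J}$ of this shape yields
\begin{displaymath}
B_{i-i_0} = \langle A_{i+(p-1)s},\ldots,A_{i+(p+q-1)s}\rangle \cap \langle A_{i+(p+q-2)s+t},\ldots,A_{i+(2p+q-2)s+t}\rangle.
\end{displaymath}
Choosing the base index $i = pd_1 - (p-1)s$ aligns the first span precisely with $\langle P_{pd}, P_{b+pd},\ldots,P_{qb+pd}\rangle$. The requirement that the second span coincide with $\langle P_{qc}, P_{b+qc},\ldots,P_{pb+qc}\rangle$ forces $i+(p+q-2)s+t = qc_1$, which simplifies to $t = qc_1 - pd_1 - (q-1)b_1$, matching the claimed value. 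The index $i_0$ is then fixed by the preceding Proposition, which identifies the intersection with $P_{qc+pd}$.

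Finally I would verify the hypothesis $\gcd(s,t)=1$ needed to apply the $T_{I,J}$ Proposition. The condition that $b-a,c-a,d-a$ generate $\mathbb{Z}^2$ says $\gcd(sp,\,sq,\,qc_1-pd_1)=1$, and since $\gcd(p,q)=1$ this reduces to $\gcd(s,\,qc_1-pd_1)=1$; as $t\equiv qc_1-pd_1 \pmod{s}$, we get $\gcd(s,t)=1$. The main obstacle is the index-matching step: committing to $k=p$ as the location of $t$ in $J$ is what makes the two spans align, and once that commitment is made the formula for $t$ is forced, but tracking the indices correctly through both the $T_{I,J}$ formula and the preceding Proposition is where most of the care is required.
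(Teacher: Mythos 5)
Your proposal is correct and follows essentially the same route as the paper: use Conjecture \ref{conjcover} together with \eqref{eqzontal} to upgrade the containment to an equality of $P_{qc+pd}$ with the intersection of the two spans, shift indices, and match against \eqref{eqIJconst} to read off $s$, $t$, and $k$. Your explicit index bookkeeping and the verification that $\gcd(s,t)=1$ (which the paper leaves implicit) are both accurate.
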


\begin{proof}
By Conjecture \ref{conjcover} and \eqref{eqzontal}, we have generically that
\begin{displaymath}
P_{qc+pd} = \langle P_{pd}, P_{b+pd}, P_{2b+pd}, \ldots, P_{qb+pd} \rangle 
\cap \langle P_{qc}, P_{b+qc}, P_{2b+qc}, \ldots, P_{pb+qc} \rangle.
\end{displaymath}
Shifting indices, any $P_{i,j+pq}$ can be expressed in a similar way in terms of the $P_{i,j}$.  This equation is of the same form as \eqref{eqIJconst}, and the correspondence between $S=\{(0,0),(b_1,0),(c_1,p),(d_1,q)\}$ and $I=(s,s,\ldots, s)$, $J=(s,\ldots, s, t, s, \ldots, s)$ can be easily read off.
\end{proof}

In words, the map $T_{I,J}$ traces out every $pq$th row of a $Y$-mesh.  Hence the $S$-map $T_{p+q,S}$ can be thought of as a $pq$th root of $T_{I,J}$, up to the conjecturally finite cover $\pi_1$.  The process can be reversed and any $I$, $J$ as above can be realized, assuming $s,t$ are relatively prime and the position of the $t$ is relatively prime to $D$.  Taking inverses we also get such pairs with $I$ and $J$ reversed.

\begin{ex}
The \emph{short diagonal hyperplane map} \cite{KS1} is $T_{I,J}$ where $I=(2,2,\ldots, 2)$ and $J=(1,1,\ldots, 1)$.  Suppose first that $D$ is odd.  Then the partial sums $0,1,\ldots, D-1$ of $J$ can be reordered as $1,3,\ldots, D-2, 0, 2, \ldots, D-1$.  Therefore, it is equivalent to take $J=(2,\ldots, 2, 2-D, 2, \ldots, 2)$ where the $2-D$ is in position $(D-1)/2$.  We are in the setting of Proposition \ref{propzontal} with $s=2$, $t=2-D$, and $k=(D-1)/2$.  Working backwards, $p = (D-1)/2$, $q=(D+1)/2$, $b_1=2$ and $c_1,d_1$ satisfy
\begin{displaymath}
\frac{D+1}{2}c_1 - \frac{D-1}{2}d_1 - \frac{D-1}{2}2 = 2-D.
\end{displaymath}
A possible choice is $c_1=d_1=1$.  Therefore, the short diagonal hyperplane map in odd dimension $D$ is related to the $S$-map with 
\begin{displaymath}
S = \left\{(0,0),(2,0),\left(1,\frac{D-1}{2}\right), \left(1,\frac{D+1}{2}\right)\right\}.
\end{displaymath}

If $D$ is even then the partial sums reorder to $0,2,\ldots, D-2, 1, 3,\ldots, D-1$.  As such we can take $J=(2,\ldots, 2, 3-D, 2, \ldots, 2)$ where the $3-D$ is in position $k=D/2$.  Assuming $D \geq 4$, this violates $\gcd(k,D)=1$ so there is no affiliated $S$-map in this case.
\end{ex}

\begin{ex}
The \emph{dented pentagram map} \cite{KS2} is the map $T_{I,J}$ where $I=(1,\ldots, 1,2,1,\ldots, 1)$ and $J=(1,1,\ldots, 1)$.  Suppose the 2 in $I$ is in position $D-k$.  If $\gcd(k,D)=1$ then the inverse $T_{J^*,I^*}$ is of the form above with $s=1$ and $t=2$.  So $T_{J^*,I^*}$ is related to the $S$-map where 
\begin{displaymath}
S = \{(0,0), (1,0), (c_1,k), (d_1,D-k) \}
\end{displaymath}
and $c_1,d_1$ are chosen so that $(D-k)c_1-kd_1-(D-k-1) = 2$.  A solution is guaranteed to exist since $\gcd(D-k,k)=1$.  The $S$ corresponding to the dented map itself is obtained by applying $(i,j)\mapsto(i,-j)$
\begin{displaymath}
S = \{(0,0), (1,0), (c_1,-k), (d_1,-D+k) \}.
\end{displaymath}
\end{ex}

Table \ref{tab3D} lists some $(I,J)$-maps in three dimensions along with the corresponding $S$.  The list includes the short diagonal hyperplane map \cite{KS1}, the dented and deep dented maps \cite{KS2}, and an unnamed map investigated numerically in \cite{KS3}.  For these examples we can verify Conjecture \ref{conjcover}.  

\begin{table}
\begin{tabular}{l|l|l|l}
Map name& $I$ & $J$ & $S$ \\
\hline
Short diagonal & $(2,2)$ & $(1,1)$ & $\{(0,0),(2,0),(1,1),(1,2)\}$\\
Dented & $(1,2)$ & $(1,1)$ &  $\{(0,0),(1,0),(2,-1),(1,-2)\} $\\
Deep dented & $(1,t)$ & $(1,1)$ & $\{(0,0),(1,0),(t,-1),(t-1,-2)\}$\\
? & $(2,2)$ & $(1,2)$ & $\{(0,0),(2,0),(2,1),(1,2)\}$
\end{tabular} 
\caption{The $Y$-pin $S$ associated with several $(I,J)$-maps.}
\label{tab3D}
\end{table}

\begin{rmk}
For $I,J$ as above, we obtain a cluster description of $T_{I,J}$ by way of the corresponding $S$-map, settling in these cases the open question (see e.g. \cite[Problem 5.12]{KS2}) of whether such a description exists.  Another fundamental problem is determining which of the $(I,J)$-maps are integrable.  Integrability has been verified by Khesin and Soloviev for short diagonal hyperplane maps \cite{KS1} and dented and deep dented maps \cite{KS2}.  We consider it likely that all $S$-maps, and hence all associated $(I,J)$-maps are integrable.  It should be noted however that not all integrable cases fit into our framework.  For example, the dented map $I=(1,2,1), J=(1,1,1)$ in dimension $D=4$ is integrable but has the dent in position 2 which is not relatively prime to $D$.
\end{rmk}

\section{The auxiliary lattice} \label{seclattice}
We present two alternate definitions of $D(S)$ to the one given in the introduction, and prove their equivalence.  The three definitions are tied together by a certain sublattice of $\mathbb{Z}^2$ associated with $S$.  %The construction of the lattice depends on whether the convex hull of $S$ is a trianlge or a quadrilateral.

\begin{defin}
Let $S = \{s_1,\ldots, s_k\} \subseteq \mathbb{Z}^2$.  A \emph{convex relation} on $S$ is a nontrivial linear relation
\begin{displaymath}
m_1s_1 + \ldots + m_ks_k=0
\end{displaymath}
such that $m_1,\ldots, m_k \in \mathbb{Z}$ and $m_1+\ldots + m_k = 0$.  In this case
\begin{displaymath}
\sum_{m_i>0} m_i = \sum_{m_i < 0} -m_i,
\end{displaymath}
so call this common value the \emph{magnitude} of the convex relation.  Say the relation is \emph{primitive} if $\gcd(m_1,\ldots, m_k)=1$.
\end{defin}

The reason for the name is that, if $M$ is the magnitude then the relation can be rewritten as
\begin{displaymath}
\sum_{m_i>0} \frac{m_i}{M}s_i = \sum_{m_i<0} \frac{-m_i}{M}s_i
\end{displaymath}
i.e. a convex combination of some $s_i$ equals a convex combination of others.

We care only about the case $|S|=4$.  If the elements of $S$ are vertices of a convex quadrilateral, then the intersection of the diagonals is a convex combination of each pair of opposite vertices.  If the convex hull of $S$ is a triangle, then one element of $S$ is a convex combination of the other three (and also of itself).  In both cases we have a unique primitive convex relation for $S$ (up to multiplication by -1).  Let $M(S)$ denote the magnitude of this relation.

The unique convex relation of $S$ naturally partitions $S$ into two pieces (opposite pairs in the quadrilateral case, the triple of vertices and the single interior point in the triangle case).  Define $\Lambda(S)$ to be the sublattice of $\mathbb{Z}^2$ spanned by vectors of the form $s_2-s_1$ where $s_1,s_2\in S$ are elements of the same piece.  Use the notation $|\mathbb{Z}^2 : \Lambda(S)|$ for the index of this sublattice.

\begin{prop} \label{proplattice}
For any $Y$-pin $S$
\begin{displaymath}
M(S) = |\mathbb{Z}^2 : \Lambda(S)| = D(S)+1.
\end{displaymath}
\end{prop}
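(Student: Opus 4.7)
My plan is to prove the two equalities by relating each side to $2\alpha = D(S)+1$. The $Y$-pin hypotheses (in particular that $b-a,c-a,d-a$ generate $\mathbb{Z}^2$) imply that the convex hull of $S$ is either a non-degenerate quadrilateral or a triangle, and I case-analyze accordingly. The primitive convex relation encodes either the intersection of the two diagonals of a convex quadrilateral at an interior point, or the expression of an interior lattice point of a triangle as a convex combination of its vertices, so the two ``pieces'' of the partition are precisely the pairs of opposite vertices in the quadrilateral case, and the interior point versus the three hull vertices in the triangle case.

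To prove $|\mathbb{Z}^2:\Lambda(S)|=2\alpha$, I compute the index as a single $2\times 2$ determinant in each case. For a quadrilateral hull with opposite pairs $\{x,z\}$ and $\{y,w\}$, $\Lambda(S)$ is generated by the diagonal vectors $z-x$ and $w-y$; splitting the quadrilateral into two triangles by one diagonal and summing signed areas yields the identity $|(z-x)\times(w-y)| = 2\alpha$. For a triangle hull with vertices $v_1,v_2,v_3$ and interior point $v_0$, the lattice $\Lambda(S)$ is generated by two edges $v_2-v_1$ and $v_3-v_1$, and $|(v_2-v_1)\times(v_3-v_1)| = 2\alpha$ is the standard triangle formula.

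To prove $M(S)=|\mathbb{Z}^2:\Lambda(S)|$, set $G=\mathbb{Z}^2/\Lambda(S)$. The key observation is that $G$ is cyclic, generated by a single element $g$. Indeed, $\mathbb{Z}^2$ is generated by $b-a,c-a,d-a$, and modulo $\Lambda(S)$ each of these images is either $0$ or $\pm g$, where $g$ is the difference of the images of the two pieces (whose elements the quotient collapses to single points). Reducing the primitive convex relation $m_a a + m_b b + m_c c + m_d d = 0$ modulo $\Lambda(S)$ and performing this collapse gives $M\cdot g = 0$, so $\mathrm{ord}(g)\mid M$. Conversely, $\mathrm{ord}(g)(b-a)\in\Lambda(S)$, and unfolding this lattice membership produces an integer relation on $a,b,c,d$ with coefficient sum zero whose two pieces sum to $\pm\mathrm{ord}(g)$; this relation must be an integer multiple of the primitive one (which has piece-sums $\pm M$), forcing $M\mid\mathrm{ord}(g)$. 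Hence $\mathrm{ord}(g) = M = |G|$, completing the chain $M(S) = |\mathbb{Z}^2:\Lambda(S)| = 2\alpha = D(S)+1$.

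The main subtlety is sign bookkeeping in the degenerate configuration where three of the four points of $S$ are collinear; there the primitive convex relation has one zero coefficient, so the partition of $S$ is technically ambiguous. This is resolved by checking that assigning the zero-coefficient point to either piece produces the same sublattice $\Lambda(S)$, after which the cyclic-quotient argument carries over verbatim. Otherwise the work is elementary, requiring only the confirmation (immediate from convexity) that the geometric partition of $S$ coincides with the sign partition of the primitive convex relation.
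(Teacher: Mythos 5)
Your proof is essentially correct, but it reaches the equality $M(S)=|\mathbb{Z}^2:\Lambda(S)|$ by a genuinely different route from the paper. The paper writes down the coefficients of the primitive convex relation explicitly as signed $3\times 3$ determinants, so that each $|m_i|$ is visibly twice the area of the triangle on $S\setminus\{s_i\}$; summing the positive ones then gives $M(S)=2\alpha$ directly, and separately the fundamental parallelogram of $\Lambda(S)$ is shown to have area $2\alpha$. You instead prove $|\mathbb{Z}^2:\Lambda(S)|=2\alpha$ by the same determinant-of-generators computation, but obtain $M(S)=|\mathbb{Z}^2:\Lambda(S)|$ algebraically: the quotient $G=\mathbb{Z}^2/\Lambda(S)$ is cyclic with generator $g$, reducing the primitive relation modulo $\Lambda(S)$ gives $Mg=0$, and conversely any relation $\operatorname{ord}(g)\cdot g=0$ unfolds to a convex relation that must be a multiple of the primitive one, forcing $M\mid\operatorname{ord}(g)$. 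This is a clean argument that explains \emph{why} the two invariants agree without recomputing areas, and it uses the uniqueness of the primitive convex relation (established in the paper's setup) in an essential way; the paper's approach is more computational but yields the geometric interpretation of the individual coefficients $m_i$ as triangle areas, which it reuses in the case analysis. One small inaccuracy: in the boundary case where the fourth point lies on a side of the triangle, your claim that assigning the zero-coefficient point to either piece "produces the same sublattice" is false. For $S=\{(0,0),(1,0),(0,2),(0,3)\}$ with relation $a-3c+2d=0$, the two choices give $\langle(1,0),(0,3)\rangle$ and $\langle(0,3),(-1,2)\rangle$, which are distinct sublattices of index $3$. What is true, and what your argument actually needs, is that both choices yield the same index and that the cyclic-quotient argument runs for either choice; this matches the paper's remark that the proposition holds whichever grouping is made.
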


\begin{proof}
Let
\begin{align*}
m_1 &= \det \left[ \begin{array}{ccc} 
s_2 & s_3 & s_4 \\
1 & 1 & 1 \\
\end{array}
\right] \\
m_2 &= -\det \left[ \begin{array}{ccc} 
s_1& s_3 & s_4 \\
1 & 1 & 1 \\
\end{array}
\right] \\
m_3 &= \det \left[ \begin{array}{ccc} 
s_1 & s_2 & s_4 \\
1 & 1 & 1 \\
\end{array}
\right] \\
m_4 &= -\det \left[ \begin{array}{ccc} 
s_1 & s_2 & s_3 \\
1 & 1 & 1 \\
\end{array}
\right] 
\end{align*}
Standard determinant tricks show that $m_1s_1 + m_2s_2 + m_3s_3 + m_4s_4=0$ and $m_1+m_2+m_3+m_4=0$.  Since $s_2-s_1, s_3-s_1, s_4-s_1$ generate $\mathbb{Z}^2$, it follows that $\gcd(m_2,m_3,m_4)=1$ so this relation is primitive.  Geometrically, we have $|m_i|$ is twice the area of the triangle formed by the three elements of $S \setminus \{s_i\}$.

First suppose the convex hull of $S$ is a triangle, say with vertices $s_1,s_2,s_3$.  Then we can take $m_1,m_2,m_3 \leq 0$ and $m_4 >0$.  Twice the area of the convex hull, which by definition equals $D(S)+1$, is given by $m_4 = M(S)$.  Meanwhile, $\Lambda(S)$ is generated by $s_2-s_1, s_3-s_1$.  A fundamental parallelogram of $\Lambda$ clearly has area twice that of the convex hull of $S$, (see Figure \ref{figlatticetri}).  As such $|\mathbb{Z}^2 : \Lambda(S)| = D(S)+1$.

Now suppose the convex hull of $S$ is a quadrilateral, with vertices $s_1,s_2,s_3,s_4$ in that order.  Then we can take $m_1,m_3 < 0$ and $m_2,m_4 >0$.  The convex hull of $S$ can be broken up into two triangles $\triangle s_1s_3s_4$ and $\triangle s_1s_2s_3$ so $D(S)+1 = m_2 + m_4 = M(S)$.  The lattice $\Lambda(S)$ is generated by $s_3-s_1$ and $s_4-s_2$.  Figure \ref{figlatticequad} shows an example fundamental parallelogram in this case.  As is indicated, the original quadrilateral can be inscribed in the parallelogram.  Again, the area of the parallelogram is twice that of the convex hull of $S$.
\end{proof}

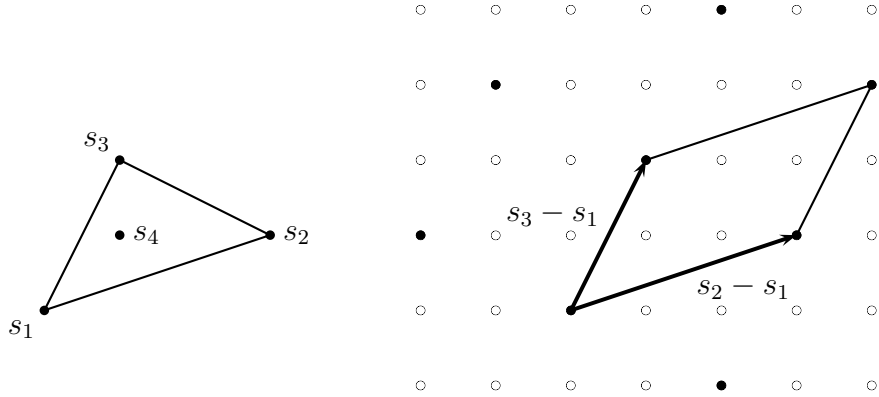
\begin{figure}
\begin{pspicture}(11,6)
\rput(-1,-.5){
\psdots(1,2)(4,3)(2,4)(2,3)
\pspolygon(1,2)(4,3)(2,4)
\uput[dl](1,2){$s_1$}
\uput[r](4,3){$s_2$}
\uput[ul](2,4){$s_3$}
\uput[r](2,3){$s_4$}
}

\rput(4,-.5){
\multirput(0,0)(1,0){7}{\multirput(0,0)(0,1){6}{\psdots[dotstyle=o](1,1)}}
\psdots(1,3)(2,5)(3,2)(4,4)(5,1)(5,6)(6,3)(7,5)

\pspolygon(3,2)(6,3)(7,5)(4,4)
\psline[linewidth=1.5pt]{->}(3,2)(6,3)
\uput[-30](4.5,2.5){$s_2-s_1$}
\psline[linewidth=1.5pt]{->}(3,2)(4,4)
\uput[ul](3.5,3){$s_3-s_1$}
%\pspolygon(3,2)(6,3)(7,5)(4,4)
}
\end{pspicture}
\caption{The lattice $\Lambda(\{s_1,s_2,s_3,s_4\})$ has basis $s_2-s_1,s_3-s_1$.  A fundamental parallelogram is indicated.}
%\caption{The lattice $\Lambda(S)$ for $S=\{(0,0),(4,1),(1,2),(1,1)\}$ with a fundamental parallelogram indicated.}
\label{figlatticetri}
\end{figure}

\begin{figure}
\begin{pspicture}(11,6)
\rput(-1,-.5){
\pspolygon[showpoints=true](2,2)(4,2)(3,4)(1,3)
\uput[dl](2,2){$s_1$}
\uput[dr](4,2){$s_2$}
\uput[u](3,4){$s_3$}
\uput[ul](1,3){$s_4$}
}

\rput(4,-.5){
\multirput(0,0)(1,0){7}{\multirput(0,0)(0,1){6}{\psdots[dotstyle=o](1,1)}}
\psdots(1,6)(2,1)(3,3)(4,5)(6,2)(7,4)

\pspolygon(3,3)(6,2)(7,4)(4,5)
\psline[linewidth=1.5pt]{->}(6,2)(7,4)
\uput[dr](6.5,3){$s_3-s_1$}
\psline[linewidth=1.5pt]{->}(6,2)(3,3)
\uput[210](4.5,2.5){$s_4-s_2$}
\pspolygon[linestyle=dashed](4.29,2.57)(6.29,2.57)(5.29,4.57)(3.29,3.57) %fractional parts (2/7,4/7)
}
\end{pspicture}
\caption{The lattice $\Lambda(\{s_1,s_2,s_3,s_4\})$ has basis $s_3-s_1,s_4-s_2$.  A fundamental parallelogram is indicated.}
%\caption{The lattice $\Lambda(S)$ for $S=\{(0,0),(4,1),(1,2),(1,1)\}$ with a fundamental parallelogram indicated.}
\label{figlatticequad}
\end{figure}
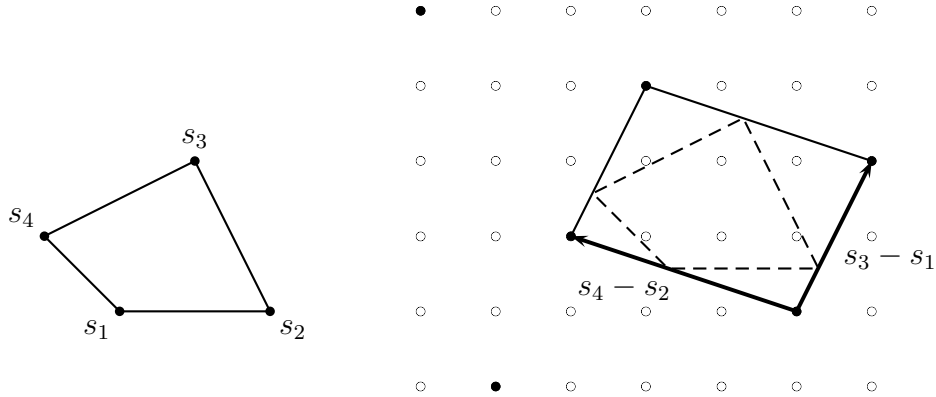

\begin{rmk}
There is a boundary case in which the convex hull of $S$ is a triangle, but the fourth point lies on a side rather than the interior of the triangle.  For example, let $S = \{a,b,c,d\} = \{(0,0), (1,0), (0,2), (0,3)\}$.  The area of the convex hull is $3/2$ so $D(S) = 2$.  A primitive convex relation, $a-3c+2d=0$ does have magnitude $M=3 =D(S)+1$.  There are two choices for the lattice $\Lambda(S)$.  Grouping $b$ with $a$ and $d$ yields $\Lambda(S)$ generated by $b-a=(1,0)$ and $d-a = (0,3)$.  Grouping $b$ with $c$ yields $\Lambda(S)$ generated by $d-a=(0,3)$ and $c-b=(-1,2)$.  In general, Proposition \ref{proplattice} holds whichever choice is made.
\end{rmk}

\section{Realizability} \label{secrealize}
To complete the proof of Theorem \ref{thmmain} we need to analyze the spaces $X_{D,S}$, whose elements are collections of points subject to (P1), (P2), and (L3) relations.  Specifically, we want to know if the vertices of a generic element actually span all of $\mathbb{RP}^D$.  We generalize the relation types and give a general approach to this sort of problem.  We also deprojectivize to $\mathbb{R}^{D+1}$ for convenience. 

Let $m \geq 1$.  Let $R = \mathbb{Z} \times \{1,2,\ldots, m\}$.  Let $C$ be a set of finite subsets of $R$, none of which being contained in any other.  Define a $C$-arrangement to be a collection of vectors $v_r \in \mathbb{R}^{D+1}$ indexed by $r \in R$ such that for each $I \in C$
\begin{itemize}
\item the set $\{v_r : r \in I\}$ is linearly dependent, and
\item each proper subset of $\{v_r : r \in I\}$ is linearly independent.
\end{itemize}
In matroid terminology, the elements of $C$ define circuits, although other circuits may exist in the arrangement.

In the above context, define a \emph{$C$-filtration} to be a triple $(f,g,H_t)$ where $f,g:C \to R$ and $H_t \subseteq R$ are finite for $t \in \mathbb{Z}$ satisfying the following properties
\begin{enumerate}
\item $f(I) \in I$, $g(I) \in I$, and $f(I) \neq g(I)$ for all $I \in C$
\item $f$ and $g$ are bijections
\item For each $r \in R$ there exist integers $t_1 \leq t_2$ such that $r \in H_t$ if and only if $t_1 < t \leq t_2$.
\item For each $t \in \mathbb{Z}$, $g\circ f^{-1}$ restricts to a bijection from $H_t \setminus H_{t+1}$ to $H_{t+1} \setminus H_t$.
\item For $I \in C$ if $f(I) \in H_t \setminus H_{t+1}$ (equiv. $g(I) \in H_{t+1} \setminus H_t$) then $I \subseteq H_t \cup H_{t+1}$.
%\item For each $I \in C$, there is a unique $t = t(I)$ such that $I \subseteq H_t \cup H_{t+1}$.
%\item $r \in H_{t+1} \setminus H_t$ if and only if $t=t(g^{-1}(r))$.  
%\item $r \in H_t \setminus H_{t+1}$ if and only if $t = t(f^{-1}(r))$.
%\item $t(g^{-1}(r)) < t(f^{-1}(r))$.
\item There is a linear order on $H_{t+1} \setminus H_t$ such that if $r,r' \in H_{t+1} \setminus H_t$ and $r' \in g^{-1}(r)$ then $r' \leq r$.
\item There is a linear order on $H_t \setminus H_{t+1}$ such that if $r,r' \in H_{t} \setminus H_{t+1}$ and $r' \in f^{-1}(r)$ then $r' \leq r$.
\end{enumerate}
Note property (4) implies that all $H_t$ have the same size.

\begin{prop} \label{propfilter}
Let $(f,g,H_t)$ be a filtration for $C$.  Then the span of any $C$-arrangement has dimension at most $|H_0|$.  %Suppose each element of $C$ has size at most $D+2$.    %Then a $C$-arrangement exists in $\mathbb{R}^{D+1}$ if and only if $D < |H_0|$
\end{prop}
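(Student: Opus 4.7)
The plan is to let $V_t := \operatorname{span}\{v_r : r \in H_t\} \subseteq \mathbb{R}^{D+1}$ for each $t \in \mathbb{Z}$ and to exploit property (4), which forces $|H_t| = |H_0|$ and hence $\dim V_t \leq |H_0|$ for every $t$. Property (3) guarantees that every $r \in R$ lies in some $H_t$, so once the $V_t$ are shown to coincide, every vector of the arrangement will lie in this common subspace $V^\ast$ and the bound $\dim V^\ast \leq |H_0|$ will follow immediately.

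The heart of the argument is therefore to establish $V_t = V_{t+1}$ for every $t$. I would prove $V_{t+1} \subseteq V_t$ by induction along the linear order on $H_{t+1} \setminus H_t$ supplied by property (6). Pick $r$ in this set and let $I := g^{-1}(r) \in C$, the unique circuit with $g(I) = r$. Property (5), in the form ``$g(I) \in H_{t+1} \setminus H_t$ implies $I \subseteq H_t \cup H_{t+1}$'', shows that every $s \in I \setminus \{r\}$ lies either in $H_t$, contributing $v_s \in V_t$ tautologically, or in $H_{t+1} \setminus H_t$, in which case property (6) gives $s < r$ and the inductive hypothesis has already placed $v_s$ in $V_t$. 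Since $I$ is a circuit and every proper subset of $\{v_s : s \in I\}$ is independent, the unique dependence among these vectors has nonzero coefficient on $v_r$, so $v_r$ is expressible as a linear combination of the other $v_s$ and hence lies in $V_t$.

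The reverse inclusion $V_t \subseteq V_{t+1}$ is obtained by the mirror argument: order $H_t \setminus H_{t+1}$ via property (7) and, for each $r$ in this set, eliminate $v_r$ using the circuit $I := f^{-1}(r)$. Property (5) once again traps $I \subseteq H_t \cup H_{t+1}$, property (7) places the problematic elements (those in $H_t \setminus H_{t+1}$) strictly below $r$, and the same circuit-elimination step yields $v_r \in V_{t+1}$. Combining the two inclusions gives $V_t = V_{t+1}$ for every $t$, and together with the observation above this completes the proof.

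The main subtlety I anticipate is a bookkeeping one: coordinating the direction of the elimination with the linear orders in (6) and (7) so that, when handling $v_r$, the circuit $g^{-1}(r)$ (respectively $f^{-1}(r)$) only involves vectors already known to belong to the target subspace. Properties (5), (6), and (7) are engineered precisely for this coordination, and once the correct reading of ``$r' \in g^{-1}(r)$'' as ``$r'$ is an element of the single circuit $g^{-1}(r) \subseteq R$'' is fixed, the induction is essentially mechanical, with no further obstacle than verifying that the circuit-minimality forces the coefficient of $v_r$ in the dependence to be nonzero so that elimination is legal.
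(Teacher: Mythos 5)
Your proof is correct and follows essentially the same route as the paper's: the circuit $g^{-1}(r)$ (resp.\ $f^{-1}(r)$), confined to $H_t \cup H_{t+1}$ by property (5) and processed in the order supplied by (6) (resp.\ (7)), is used to express $v_r$ in terms of vectors already known to lie in the neighboring span. The only difference is packaging: the paper phrases the same induction as a generative procedure building every $C$-arrangement from free choices on $H_0$ (slightly stronger information that it reuses in the proof of Theorem \ref{thmmain}), whereas you argue the span equality $V_t = V_{t+1}$ directly for a given arrangement.
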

\begin{proof}
%It follows from the properties of filtrations that $r \in H_t$ if and only if 
%\begin{displaymath}
%t(g^{-1}(r)) < t \leq t(f^{-1}(t)).  
%\end{displaymath}
We describe a procedure to generate any $C$-arrangement.  It follows from property (3) that the sets
\begin{displaymath}
\ldots, H_{-2} \setminus H_{-1}, H_{-1} \setminus H_0, H_0, H_1 \setminus H_0, H_2 \setminus H_1 \ldots 
\end{displaymath}
partition $R$.  Choose the $v_r$ for $r \in H_0$ arbitrarily.  Next construct the $v_r$ with $r \in H_1 \setminus H_0$ according to the linear order specified on that set.  Given such $r$, let  $I=g^{-1}(r)$.  By property (5), $I \subseteq H_0 \cup H_1$.  By definition of the order, if $r' \in I \setminus \{r\}$ is not in $H_0$ then $r' < r$ so $v_{r'}$ has already been constructed.  Choose $v_r$ generically on the span of the $v_{r'}$ with $r' \in I \setminus \{r\}$.

Iterate this process to construct $v_r$ with $r \in H_{t+1} \setminus H_t$ for all $t \geq 0$.  The same process can be run backwards to construct each $v_r$ with $r \in H_{t} \setminus H_{t+1}$ for all $t < 0$.  It is clear that any $C$-arrangement can be produced in this manner.  Conversely, the result is always a $C$-arrangement.  Indeed, let $I \in C$ be given.  Then $g(I) \in H_{t+1} \setminus H_t$ and $f(I) \in H_t \setminus H_{t+1}$ for some $t$.  If $t \geq 0$ then $v_{g(I)}$ was constructed to ensure that the $v_r$ with $r \in I$ define a circuit.  If instead $t < 0$ then $v_{f(I)}$ was constructed for this purpose.  So, all the circuit conditions are satisfied.

When carrying out the procedure, each $v_r$ for $r \notin H_0$ is constructed on the span of previous points.  Thus the span of the whole arrangement equals the span of the $v_r$ with $r \in H_0$ which has dimension at most $|H_0|$.
\end{proof}

We now fix a $Y$-pin $S$ and specialize $C$ to the set of triples and quadruples of $r \in R$ occurring in the (P1), (P2), and (L3) relations, i.e.
\begin{align*}
C = &\{\{r+a,r+b,r+c\}: -a_2 < r_2 \leq m-c_2\} \\
&\cup \{\{r+b,r+c,r+d\}: -b_2 < r_2 \leq m-d_2\} \\
&\cup \{\{r+a+c,r+a+d,r+b+c,r+b+d\}: -a_2-c_2 < r_2 \leq m-b_2-d_2\}
\end{align*}
where $m=d_2-a_2$.

\begin{prop} \label{propH}
The set $C$ above has a filtration $(f,g,H_t)$ with $|H_t| = D(S)+1$.
\end{prop}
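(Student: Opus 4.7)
The plan is to construct $(f, g, H_t)$ explicitly and reduce the equality $|H_t| = D(S)+1$ to the identification $M(S) = D(S) + 1$ from Proposition \ref{proplattice}. In each column of $R$ there are $d_2 - c_2$ circuits of type (L1), $b_2 - a_2$ of type (L2), and $c_2 - b_2$ of type (P3), summing to $m = |R|$ per column. For $f:C \to R$ to be a bijection, we must partition the $m$ rows of $R$ into three intervals of these lengths and, for each circuit type $T$, pick $x_T^f \in T$ so that sending an occurrence of $T$ at $r$ to $r + x_T^f$ covers exactly the assigned interval; $g$ is chosen analogously, with shifts $v_T := x_T^g - x_T^f$.

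Once $f, g$ are chosen, I would define $H_t = \{(r_1, j) \in R : r_1 - t \in [\alpha_j, \beta_j]\}$ for integers $\alpha_j \leq \beta_j$. Axiom (3) is immediate; axiom (4) together with the bijectivity of $g \circ f^{-1} : H_t \setminus H_{t+1} \to H_{t+1} \setminus H_t$ reduces to the system $\beta_{j + (v_{T_j})_2} = \alpha_j + (v_{T_j})_1 - 1$, where $T_j$ is the circuit type assigned to row $j$; this has a solution unique up to overall translation in $t$, and axioms (6), (7) produce the required orders. Axiom (5) — that each firing circuit is contained in $H_t \cup H_{t+1}$ — is then a direct check using the $(\alpha_j, \beta_j)$ and the explicit circuit shapes. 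A telescoping computation then gives $|H_t| = \sum_j (\beta_j - \alpha_j + 1) = \sum_j (v_{T_j})_1$.

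When the convex hull of $S$ is a quadrilateral with cyclic order $a, b, c, d$ (opposite pairs $\{a, c\}, \{b, d\}$), the choice $(x_{L1}^f, x_{L1}^g) = (a, c)$, $(x_{L2}^f, x_{L2}^g) = (d, b)$, $(x_{P3}^f, x_{P3}^g) = (a+d, b+c)$ yields a valid row-partition, and the above sum rearranges to the shoelace expression $a_1(b_2 - d_2) + b_1(c_2 - a_2) + c_1(d_2 - b_2) + d_1(a_2 - c_2) = 2 \cdot \mathrm{area}(abcd) = M(S)$ (with the sign absorbed by reversing time if necessary). The main obstacle is handling the remaining configurations of $S$. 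For the quadrilateral case with opposite pairs $\{a, d\}, \{b, c\}$, one instead takes $(x_{P3}^f, x_{P3}^g) = (a+c, b+d)$ and adjusts L1, L2 accordingly; for the triangle case (one of $b, c$ interior to the other three points) a uniform choice $v_T = b - a$ for every type gives $|H_t| = (d_2 - a_2)(b_1 - a_1)$, equal to twice the area of the bounding triangle and hence to $M(S)$. By Remark \ref{rmkequiv} there are only finitely many configurations up to equivalence that require checking, and in each one the row partition, axiom (5), and the corresponding determinantal identity can be verified directly.
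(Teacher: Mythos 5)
Your overall architecture is the same as the paper's: for each circuit type assign which element is $f(I)$ and which is $g(I)$, verify bijectivity by matching row intervals, let $H_t$ be a sliding union of row intervals, and split into cases by the convex position of $S$. Your counting step is a genuinely different and cleaner route to $|H_t|=D(S)+1$: the paper proves that $|H_t|$ equals the area of a continuous version of $H_t$ by integrating over $t$, and then computes that area geometrically, whereas you telescope the axiom-(4) relations $\beta_{\sigma(j)}=\alpha_j+(v_{T_j})_1-1$ over the row permutation $\sigma$ to get $|H_t|=\sum_j (v_{T_j})_1$ directly. Your shoelace computation in the quadrilateral case with opposite pairs $\{a,c\},\{b,d\}$ is correct (it is the time-reverse of the choice in Table \ref{tabfgside}), and the case of opposite pairs $\{a,d\},\{b,c\}$ also checks out with the natural companion choices $v_{L1}=v_{L2}=b-c$.

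There are, however, two genuine gaps. First, the triangle case is wrong. The uniform choice $v_T=b-a$ is not even available for the type (L2) circuits $\{r+b,r+c,r+d\}$, since $b-a$ is in general not a difference of two of $b,c,d$; and the claimed identity $(d_2-a_2)(b_1-a_1)=2\,\mathrm{area}$ is false, as the right side is independent of the interior point $b$ while the left side is not. For instance, $a=(0,0)$, $b=(1,1)$, $c=(3,2)$, $d=(0,3)$ is a valid $Y$-pin with $b$ interior to the triangle $acd$, and your formula gives $3$ while $D(S)+1=9$. The correct choice (Table \ref{tabfgtri}) is $v_{L1}=a-c$, $v_{L2}=v_{P3}=d-c$, giving $(d_2-c_2)(a_1-c_1)+(c_2-a_2)(d_1-c_1)=\pm(D(S)+1)$. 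Second, the real content of the proposition is axiom (5), together with the linear orders for (6) and (7), and you do not carry these out. The axiom-(4) system is $m$ equations in the $2m$ unknowns $\alpha_j,\beta_j$, so it is underdetermined rather than unique up to translation; the remaining freedom has to be spent ensuring that every circuit lies in $H_t\cup H_{t+1}$ at the moment it fires. It is exactly this constraint that forces the paper's $H_t$ to be unions of up to three differently offset parallelograms and, in the long side case, forces a nontrivial mixed order on $H_t\setminus H_{t+1}$ (some rows ordered bottom to top, others top to bottom, the middle rows last). Until the $\alpha_j,\beta_j$ are exhibited and (5)--(7) are checked in each of the three configurations, and the triangle case is repaired, the argument is incomplete.
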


The proof is rather technical and is handled in three cases.

\subsection{The long diagonal case}
Assume that $a,b,c,d$ are vertices of a convex quadrilateral with $\overline{ad}$ being a diagonal.  Define functions $f,g:C \to R$ as in Table \ref{tabfgdiag}.  Let $\phi: \mathbb{Z}^2 \to \mathbb{Z}$ be a linear functional satisfying $\phi(b) < \phi(a) = \phi(d) < \phi(c)$.  Define $H_t$ for $t \in \mathbb{Z}$ by
\begin{displaymath}
H_t = \{r : 0 < r_2 \leq d_2-a_2 \text{ and } t+\phi(b) \leq \phi(r) < t+\phi(c)\}.
\end{displaymath}
An example of this region appears in Figure \ref{figdiag}.  We assume in this and future examples that $a_2=0$ so that the region begins in the row above $a$.

\begin{table}
\begin{tabular}{c|c|c|c}
Type of $I \in C$ & $f(I)$ & $I \setminus \{f(I),g(I)\}$ & $g(I)$\\
\hline
(P1) & $r+b$ & $r+a$ & $r+c$  \\
\hline
(P2) & $r+b$ & $r+d$ & $r+c$ \\
\hline
(L3) & $r+b+d$ & $r+a+d,r+b+c$ & $r+a+c$ \\
\end{tabular}
\caption{The functions $f,g:C \to R$ in the long diagonal case}
\label{tabfgdiag}
\end{table}

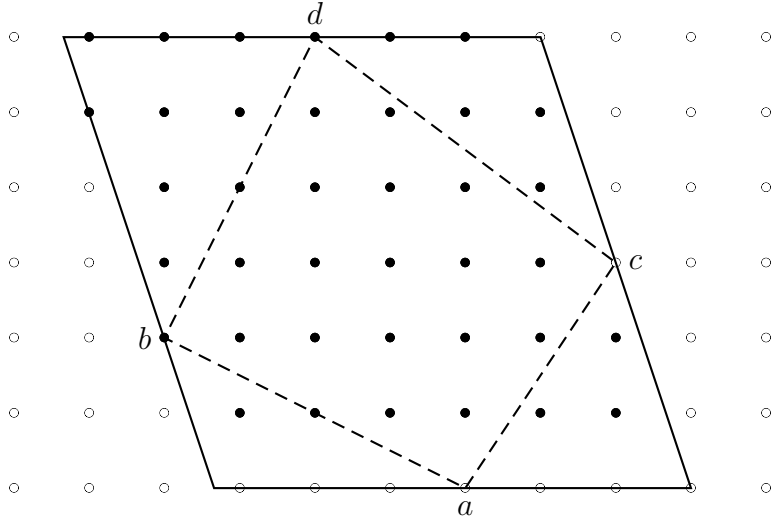
\begin{figure}
\begin{pspicture}(-1,0.5)(11,7.5)
%\psdots(6,1)(2,3)(8,4)(4,7)
\multirput(0,1)(0,1){7}{\multirput(0,0)(1,0){11}{\psdots[dotstyle=o](0,0)}}
\multirput(3,2)(1,0){6}{\psdots(0,0)}
\multirput(2,3)(1,0){7}{\psdots(0,0)}
\multirput(2,4)(1,0){6}{\psdots(0,0)}
\multirput(2,5)(1,0){6}{\psdots(0,0)}
\multirput(1,6)(1,0){7}{\psdots(0,0)}
\multirput(1,7)(1,0){6}{\psdots(0,0)}
\pspolygon(2.66,1)(9,1)(7,7)(.66,7)
\pspolygon[linestyle=dashed](6,1)(2,3)(4,7)(8,4)
\uput[d](6,1){$a$}
\uput[l](2,3){$b$}
\uput[r](8,4){$c$}
\uput[u](4,7){$d$}
\end{pspicture}
\caption{The region $H_0$ in the long diagonal case.}
\label{figdiag}
\end{figure}

We now check that $(f,g,H_t)$ gives a filtration.
\begin{enumerate}
\item Clear
\item Let $r \in H$.  Then $r = f(I)$ for some $I$ of type
\begin{displaymath}
\begin{cases}
\textrm{(P1)}, & \textrm{if } b_2-a_2 < r \leq m - (c_2-b_2) \\
\textrm{(P2)}, & \textrm{if } 0 < r \leq m + b_2 - d_2 = b_2-a_2\\ 
\textrm{(L3)}, & \textrm{if } m-(c_2-b_2) = b_2+d_2-a_2-c_2 < r \leq m \\
\end{cases}
\end{displaymath}
(recall $m = d_2-a_2$).  These intervals cover $\{1,2,\ldots m\}$ so $f$ is a bijection.  Similarly $r=g(I)$ for some $r$ of type 
\begin{displaymath}
\begin{cases}
\textrm{(P1)}, & \textrm{if } c_2-a_2 < r \leq m \\
\textrm{(P2)}, & \textrm{if } c_2-b_2 < r \leq c_2-a_2\\ 
\textrm{(L3)}, & \textrm{if } 0 < r \leq c_2 - b_2\\
\end{cases}
\end{displaymath}
so $g$ is a bijection.
\item Take $t_1 = \phi(r) - \phi(c)$ and $t_2 = \phi(r) - \phi(b)$.
\item We have
\begin{align*}
H_{t+1} \setminus H_t &= \{r \in R : \phi(r)=t+\phi(c)\} \\
H_t \setminus H_{t+1} &= \{r \in R : \phi(r)=t+\phi(b)\} \\
\end{align*}
By Table \ref{tabfgdiag}, $g\circ f^{-1}(r)$ equals either $r+c-b$ or $r+a+c-b-d$.  In either case
\begin{displaymath}
\phi(g\circ f^{-1}(r)) = \phi(r) + \phi(c)-\phi(b).
\end{displaymath}
and the result follows.
\item Let $I \in C$ with $f(I) \in H_t \setminus H_{t+1}$ and $g(I) \in H_{t+1} \setminus H_t$.  It is clear from Table \ref{tabfgdiag} that
\begin{displaymath}
\phi(f(I)) \leq \phi(r) \leq \phi(g(I))
\end{displaymath}
for all $r \in I$, with equality only possible if $r=f(I)$ or $r=g(I)$.  Therefore $r \in H_t \cap H_{t+1}$ whenever $r \in I \setminus \{f(I),g(I)\}$.
\item By the previous, if $r' \in g^{-1}(r)$ then $\phi(r') \leq \phi(r)$ with equality if and only if $r'=r$.  If in addition $r,r' \in H_{t+1} \setminus H_t$ then $\phi(r)=\phi(r')$ so $r'=r$.  Hence, any order on $H_{t+1} \setminus H_t$ will work.
\item Similar to (6).
\end{enumerate}

\subsection{The triangle case}
Now suppose the convex hull of $S$ is a triangle with one of the four points lying in the interior.  Because $a_2 \leq b_2 < c_2 \leq d_2$, $a$ and $d$ are always vertices.  We assume that $c$ is the third vertex and that $b$ is in the interior.  The other case can be handled via the symmetry $(i,j) \mapsto (i,-j)$ of the $\mathbb{Z}^2$ lattice.

Define $f,g:C \to R$ as in Table \ref{tabfgtri}.  Let $\phi:\mathbb{Z}^2 \to \mathbb{Z}$ be linear and satisfy $\phi(c) < \phi(a) = \phi(b) < \phi(d)$.  Define $H_t$ as a union of two parallelogram shaped regions
\begin{align*}
H_t = &\{r : 0 < r_2 \leq c_2-a_2 \textrm{ and } \phi(c) \leq \phi(r)-t < \phi(d)\} \\
&\cup \{r: c_2-a_2 < r_2 \leq m \textrm{ and } \phi(d)-(\phi(a)-\phi(c)) \leq \phi(r)-t < \phi(d)\}.
\end{align*}
An example of this region appears in Figure \ref{figtri}.

\begin{table}
\begin{tabular}{c|c|c|c}
Type of $I \in C$ & $f(I)$ & $I \setminus \{f(I),g(I)\}$ & $g(I)$\\
\hline
(P1) & $r+c$ & $r+b$ & $r+a$  \\
\hline
(P2) & $r+c$ & $r+b$ & $r+d$ \\
\hline
(L3) & $r+a+c$ & $r+b+d,r+b+c$ & $r+a+d$ \\
\end{tabular}
\caption{The functions $f,g:C \to R$ in the triangle case (assuming $b$ is the interior point)}
\label{tabfgtri}
\end{table}

\begin{figure}
\begin{pspicture}(0,0.5)(13,9.5)
\multirput(1,1)(0,1){9}{\multirput(0,0)(1,0){13}{\psdots[dotstyle=o](0,0)}}
\multirput(4,2)(1,0){8}{\psdots(0,0)}
\multirput(3,3)(1,0){8}{\psdots(0,0)}
\multirput(3,4)(1,0){8}{\psdots(0,0)}
\multirput(2,5)(1,0){8}{\psdots(0,0)}
\multirput(8,6)(1,0){2}{\psdots(0,0)}
\multirput(7,7)(1,0){2}{\psdots(0,0)}
\multirput(7,8)(1,0){2}{\psdots(0,0)}
\multirput(6,9)(1,0){2}{\psdots(0,0)}
\pspolygon(4,1)(2,5)(8,5)(6,9)(8,9)(12,1)
\pspolygon[linestyle=dashed](6,1)(2,5)(8,9)
\uput[d](6,1){$a$}
\uput[r](5,3){$b$}
\uput[l](2,5){$c$}
\uput[u](8,9){$d$}
\end{pspicture}
\caption{The region $H_0$ in the triangle case.}
\label{figtri}
\end{figure}
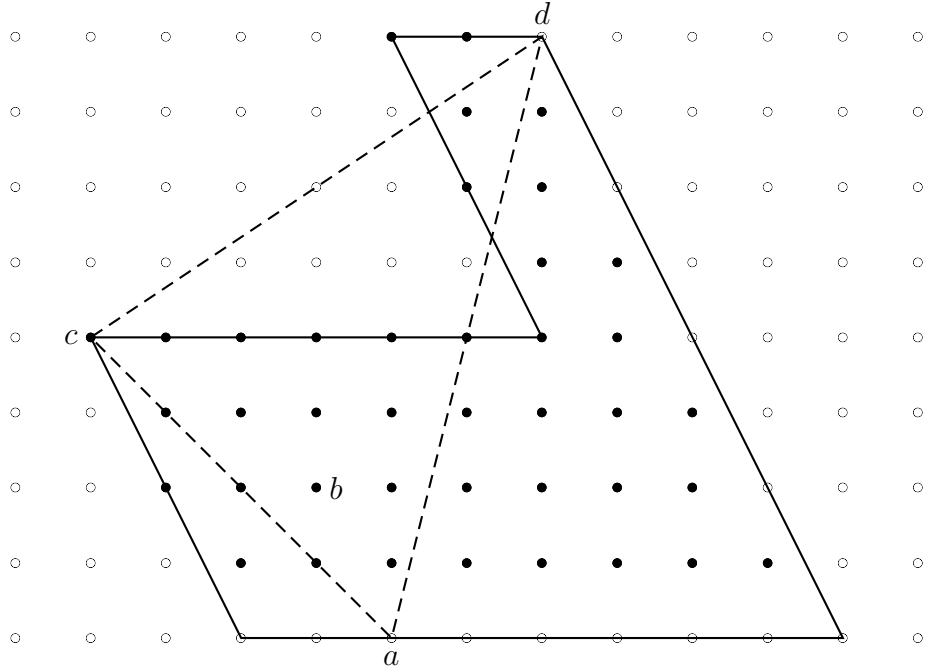

As before, conditions (1) and (3) are clear.  Condition (2) also holds, and it will be useful to keep track of how $f^{-1}$ and $g^{-1}$ behave:
\begin{align*}
\textrm{type of $f^{-1}(r)$} &= 
\begin{cases}
\textrm{(P1)}, & \textrm{if } c_2-a_2 < r_2 \leq m \\
\textrm{(P2)}, & \textrm{if } c_2-b_2 < r_2 \leq c_2-a_2\\ 
\textrm{(L3)}, & \textrm{if } 0 < r_2 \leq c_2 - b_2\\
\end{cases} \\
\textrm{type of $g^{-1}(r)$} &= 
\begin{cases}
\textrm{(P1)}, & \textrm{if } 0 < r_2 \leq d_2-c_2 \\
\textrm{(P2)}, & \textrm{if } d_2-b_2 < r_2 \leq m\\ 
\textrm{(L3)}, & \textrm{if } d_2-c_2 < r_2 \leq d_2 - b_2\\
\end{cases}
\end{align*}
Therefore
\begin{displaymath}
g\circ f^{-1}(r) = 
\begin{cases}
r+d-c, & \textrm{if } 0 < r_2 \leq c_2-a_2 \\
r+a-c, & \textrm{if } c_2-a_2 < r_2 \leq m \\
\end{cases}
\end{displaymath}
which implies condition (4).

Let $I \in C$ and suppose $r = g(I) \in H_{t+1} \setminus H_t$ (i.e. $\phi(r) = t+\phi(d)$).  We already know $f(I) \in H_t \setminus H_{t+1}$ so it suffices to take $r' \in I \setminus \{f(I),g(I)\}$.  If $0 < r_2 \leq d_2-c_2$ then $I$ has type (P1) and $r' = r+b-a$.  Hence $\phi(r')=\phi(r)$ so $r' \in H_{t+1} \setminus H_t$ as well.  If $d_2-c_2 < r_2 \leq d_2-b_2$ then $I$ has type (P3) so either $r' = r+b-a$ or $r'=r+b+c-a-d$.  In the first case $r' \in H_{t+1} \setminus H_t$ as before.  In the second case 
\begin{displaymath}
\phi(r') = t + \phi(b)+\phi(c)-\phi(a) = t + \phi(c)
\end{displaymath}
and 
\begin{displaymath}
r_2' = r_2 + b_2 + c_2 - a_2 - d_2 \leq c_2-a_2 
\end{displaymath}
so $r' \in H_t \setminus H_{t+1}$.  Finally, suppose $d_2-b_2 < r_2 \leq m$.  Then $I$ has type (P2) and $r' = r+b-d$.  Therefore $\phi(r')-t = \phi(b)$ so $\phi(c) < \phi(r')-t < \phi(d)$.  Also $r_2' = r_2+b_2-d_2 \leq b_2-a_2 \leq c_2-a_2$ so $r' \in H_t \cap H_{t+1}$.  This completes the verification of (5).

The above analysis shows that $r,r' \in H_{t+1} \setminus H_t$ distinct with $r' \in g^{-1}(r)$ can only occur if $r' = r+(b-a)$, and similarly for $H_t \setminus H_{t+1}$.  On both sets, order the elements from top to bottom (i.e. by decreasing $r_2$ value) to get (6) and (7).

\subsection{The long side case}
Lastly, suppose the convex hull of $S$ is again a quadrilateral but that now $\overline{ad}$ is a side.  Define $f,g:C \to R$ as in Table \ref{tabfgside}.  Choose a linear function $\phi:\mathbb{Z}^2 \to \mathbb{Z}$ so that $\phi(b) = \phi(c) < \min(\phi(a),\phi(d))$ (we do not specify the relative sizes of $\phi(a)$ and $\phi(d)$).  In this case, $H_t$ is a union of three parallelograms
\begin{align*}
H_t &= \{r : 0 < r_2 \leq b_2-a_2 \textrm{ and } \phi(a) \leq \phi(r)-t < \phi(a)+\phi(d)-\phi(b)\} \\
&\cup \{r : b_2-a_2 < r_2 \leq c_2-a_2 \textrm{ and } \phi(b) \leq \phi(r)-t < \phi(a)+\phi(d)-\phi(b)\} \\
&\cup \{r : c_2-a_2 < r_2 \leq m \textrm{ and } \phi(d) \leq \phi(r)-t < \phi(a)+\phi(d)-\phi(b)\}.
\end{align*}
An example of this region appears in Figure \ref{figside}.

\begin{table}
\begin{tabular}{c|c|c|c}
Type of $I \in C$ & $f(I)$ & $I \setminus \{f(I),g(I)\}$ & $g(I)$\\
\hline
(P1) & $r+c$ & $r+b$ & $r+a$  \\
\hline
(P2) & $r+b$ & $r+c$ & $r+d$ \\
\hline
(L3) & $r+b+c$ & $r+a+c,r+b+d$ & $r+a+d$ \\
\end{tabular}
\caption{The functions $f,g:C \to R$ in the long side case}
\label{tabfgside}
\end{table}

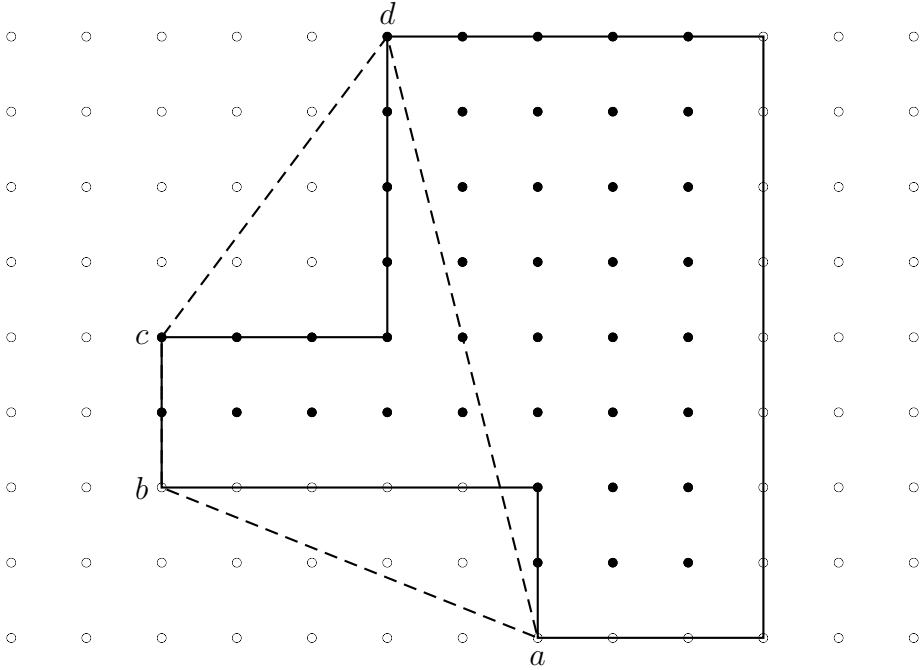
\begin{figure}
\begin{pspicture}(-1,0.5)(12,9.5)
\multirput(-1,1)(0,1){9}{\multirput(0,0)(1,0){13}{\psdots[dotstyle=o](0,0)}}
\multirput(6,2)(1,0){3}{\psdots(0,0)}
\multirput(6,3)(1,0){3}{\psdots(0,0)}
\multirput(1,4)(1,0){8}{\psdots(0,0)}
\multirput(1,5)(1,0){8}{\psdots(0,0)}
\multirput(4,6)(1,0){5}{\psdots(0,0)}
\multirput(4,7)(1,0){5}{\psdots(0,0)}
\multirput(4,8)(1,0){5}{\psdots(0,0)}
\multirput(4,9)(1,0){5}{\psdots(0,0)}
\pspolygon(6,1)(6,3)(1,3)(1,5)(4,5)(4,9)(9,9)(9,1)
\pspolygon[linestyle=dashed](6,1)(1,3)(1,5)(4,9)
\uput[d](6,1){$a$}
\uput[l](1,3){$b$}
\uput[l](1,5){$c$}
\uput[u](4,9){$d$}
\end{pspicture}
\caption{The region $H_0$ in the long side case.}
\label{figside}
\end{figure}

As usual, conditions (1) and (3) in the definition of filtration clearly hold.  One can check (2) and more specifically
\begin{align*}
\textrm{type of $f^{-1}(r)$} &= 
\begin{cases}
\textrm{(P1)}, & \textrm{if } c_2-a_2 < r_2 \leq m \\
\textrm{(P2)}, & \textrm{if } 0 < r_2 \leq b_2-a_2\\ 
\textrm{(L3)}, & \textrm{if } b_2-a_2 < r_2 \leq c_2 - a_2\\
\end{cases} \\
\textrm{type of $g^{-1}(r)$} &= 
\begin{cases}
\textrm{(P1)}, & \textrm{if } 0 < r_2 \leq d_2-c_2 \\
\textrm{(P2)}, & \textrm{if } d_2-b_2 < r_2 \leq m\\ 
\textrm{(L3)}, & \textrm{if } d_2-c_2 < r_2 \leq d_2 - b_2\\
\end{cases}
\end{align*}
Following these conditions and Table \ref{tabfgside} shows
\begin{displaymath}
g\circ f^{-1}(r) = 
\begin{cases}
r+d-b, & \textrm{if } 0 < r_2 \leq b_2-a_2 \\
r+a+d-b-c, & \textrm{if } b_2-a_2 < r_2 \leq c_2-a_2 \\
r+a-c, & \textrm{if } c_2-a_2 < r_2 \leq m \\
\end{cases}
\end{displaymath}
which confirms (4).

Let $I \in C$ and suppose $r = g(I) \in H_{t+1} \setminus H_t$.  Then 
\begin{displaymath}
\phi(r) = t+\phi(a) + \phi(d) - \phi(b).
\end{displaymath}
We know $g(I) \in H_t \setminus H_{t+1}$.  Let $r' \in I \setminus \{f(I),g(I)\}$.  If $0 < r_2 \leq d_2-c_2$ then $I$ has type (P1) and $r' = r+b-a$.  Therefore $\phi(r') = t+\phi(d)$ and
\begin{displaymath}
\phi(b) < \phi(d) = \phi(r')-t < \phi(a)+\phi(d)-\phi(b).
\end{displaymath}
Moreover, $r_2' = r_2 + b_2-a_2 > b_2-a_2$.  So $r' \in H_t$ always holds.  In the case that $r_2' \geq c_2-a_2$ then we have $r' \in H_t \setminus H_{t+1}$.  Note for later that $r' = f(I) - (c-b)$.

If $d_2-c_2 < r_2 \leq d_2-b_2$ then $I$ has type (L3).  The two cases $r'= r+c-d$ and $r'=r+b-a$ are symmetric, so consider just the first one.  Then $\phi(r') = t+\phi(a)$ and 
\begin{displaymath}
r_2' = r_2 + c_2 - d_2 \leq c_2-b_2 \leq c_2-a_2.
\end{displaymath}
So $r_2' \in H_t$ and $r_2' \in H_t \setminus H_{t+1}$ if and only if $r_2' \leq b_2-a_2$.

Lastly suppose $d_2-b_2 < r_2 \leq m$.  Then $I$ has type (P2) and $r' = r+c-d$.  Then $\phi(r') = t+\phi(a)$ so
\begin{displaymath}
\phi(b) < \phi(a) = \phi(r')-t < \phi(a)+\phi(d)-\phi(b).
\end{displaymath}
Also $r_2' = r_2 + c_2 - d_2 \leq m+ c_2-d_2 = c_2-a_2$ so $r' \in H_t$.  It is possible $r' \in H_t \setminus H_{t+1}$ and in this case $r' = f(I) + c-b$.

By the above, it is not possible to have distinct $r,r' \in H_{t+1} \setminus H_t$ with $r' \in g^{-1}(r)$.  So there is nothing to check for condition (6).  However, it is possible to have distinct $r,r' \in H_t \setminus H_{t+1}$ with $r' \in f^{-1}(r)$.  In fact, this occurs under any of the following circumstances
\begin{itemize}
\item $r' = r+c-b$ with $r_2 \leq r_2' \leq b_2-a_2$
\item $r' = r+a-b$ with $r_2' \leq b_2-a_2 < r_2 \leq c_2-a_2$
\item $r' = r+d-c$ with $b_2-a_2 < r_2 \leq r_2' \leq m$
\item $r' = r-(c-b)$ with $c_2-a_2 < r_2' \leq r_2 \leq m$.
\end{itemize}
Define an order on $H_t \setminus H_{t+1}$ as follows.  Order the $r$ with $r_2 \leq b_2-a_2$ from bottom to top.  After that, order the $r$ with $r_2 > c_2 - a_2$ from top to bottom.  
The remaining $r$ (with $b_2-a_2 < r_2 \leq c_2-a_2$) are defined to be the largest (and can be ordered arbitrarily amongst themselves).  Any such order proves (7).

\subsection{Proof of $|H_t| = D(S)+1$}

We give a case-uniform proof of $|H_t| = D(S)+1$. First, let us make time $t$ continuous in the following way. Define $\H_t$ be the region of $\bar R = \mathbb R \times (0,m]$ 
cut by the same inequalities as $H_t$, except with coordinates of vector $r$ interpreted continuously.  
Denote $\# \H_t$ the number of integer points inside $\H_t$. It is easy to see that $\#\H_t = |H_t|$. 

First, we argue that $\#\H_t$ is independent of $t$. This follows from part (4) of the definition of $C$-filtration. Indeed, at any time $t$ when the left boundary of $\H_t$
is about to lose $|H_t \setminus H_{t+1}|$ of integer points, the right boundary of $\H_t$ is about to gain $|H_{t+1} \setminus H_t|$ integer points. Since those two sets are in bijection,
the number $\#\H_t$ never changes. We will thus also denote the same number as $\#\H$.

\begin{lem}
The area of $\H_t$ equals $D(S)+1$.
\end{lem}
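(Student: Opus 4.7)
The plan is to compute the area of $\mathcal{H}_t$ directly. Since the defining inequalities of $\mathcal{H}_t$ are obtained from those of $\mathcal{H}_0$ by translating the $\phi$-bounds by $t$, the region $\mathcal{H}_t$ is the translate of $\mathcal{H}_0$ by a vector along the $r_1$-axis, so $\text{area}(\mathcal{H}_t) = \text{area}(\mathcal{H}_0)$ and it suffices to work with $t=0$.

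Writing $\phi(r) = \alpha r_1 + \beta r_2$, the horizontal cross section of $\mathcal{H}_0$ at height $r_2$ is an interval of length $w(r_2)/|\alpha|$, where $w(r_2)$ is the $\phi$-width of the defining inequalities at that height and is piecewise constant (one piece in the long diagonal case, two in the triangle case, three in the long side case). Integrating over $r_2 \in (0,m]$ expresses the area as a finite sum $\sum_i h_i w_i / |\alpha|$ of products of heights and widths read off directly from the definition of $H_t$.

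By Proposition~\ref{proplattice}, $D(S)+1 = |\mathbb{Z}^2 : \Lambda(S)|$ equals the absolute value of the determinant of any $\mathbb{Z}$-basis of $\Lambda(S)$. The strategy in each case is to identify such a basis and recognize the summed expression as the corresponding wedge product. In the long diagonal case $\phi$ vanishes on $d-a$ by construction, the single term $m\phi(c-b)/|\alpha|$ simplifies (after eliminating $\beta$ via $\alpha(d_1-a_1) + \beta m = 0$) to $|(d-a)\wedge(c-b)|$, the covolume of $\Lambda(S)$ under the basis $\{d-a,\,c-b\}$. In the triangle case $\phi$ vanishes on $b-a$ and the two-term sum simplifies to $|(c-a)\wedge(d-a)|$, the covolume under $\{c-a,\,d-a\}$. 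In the long side case $\phi$ vanishes on $c-b$ and the three-term sum simplifies to $|(c-a)\wedge(d-b)|$, the covolume under $\{c-a,\,d-b\}$. In all three cases Proposition~\ref{proplattice} then identifies the answer as $D(S)+1$.

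The main obstacle is the algebraic telescoping in the long side case, where three products of heights and widths must be regrouped using the relation $\phi(c-b) = 0$ and then collapsed into a single wedge. The triangle case has a similar but milder two-term telescoping, while the long diagonal case is essentially immediate. A pervasive minor subtlety is sign tracking: the sign of $\alpha$ depends on the chosen $\phi$ and on the orientation of $S$ in $\mathbb{Z}^2$, so one must take absolute values at the end to obtain a positive area.
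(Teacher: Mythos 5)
Your proof is correct, but it takes a genuinely different route from the paper's. The paper works directly from the definition $D(S)+1 = 2\cdot(\text{area of the convex hull of }S)$ and argues by dissection: the region $\H_t$ and the convex hull are each cut into corresponding pieces (Figure \ref{figareas}), with each piece of $\H_t$ visibly having twice the area of its partner; the details are left as elementary geometry read off the figures. You instead compute $\mathrm{area}(\H_t)$ in closed form as a sum of height-times-width terms and collapse it to $|(d-a)\wedge(c-b)|$, $|(c-a)\wedge(d-a)|$, or $|(c-a)\wedge(d-b)|$ in the three cases --- precisely the covolume of $\Lambda(S)$ with respect to the basis determined by the convex-relation partition --- and then invoke Proposition \ref{proplattice} to identify this with $D(S)+1$. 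I checked the telescoping you flag as the main obstacle: in the long side case the three terms regroup via $\phi(c)=\phi(b)$ into $(c_2-a_2)\phi(d-b)+(d_2-b_2)\phi(a-c)$, whose $\beta$-parts cancel, leaving $-\alpha\,(c-a)\wedge(d-b)$; dividing by $|\alpha|$ gives the stated determinant, and similarly in the other two cases. Your version is more computational but also more airtight than the paper's appeal to pictures; the trade-off is that it leans on Proposition \ref{proplattice}, whose proof already contains essentially the same parallelogram-versus-hull comparison, whereas the paper's dissection is self-contained and makes the factor of two geometrically visible. One minor point to state explicitly: the reduction to $t=0$ (or the $t$-independence of the area) needs $\alpha\neq 0$ so that varying $t$ only translates $\H_t$ horizontally; this holds because $\phi$ is constant on a non-horizontal difference of elements of $S$ in each case while being nonconstant on $S$.
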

\begin{proof}
Recall that $D(S)+1$ equals twice the area of the convex hull of $S$.  In Figures \ref{figdiag}, \ref{figtri}, and \ref{figside} the region $\H_t$ is bounded by a solid path while the convex hull of $S$ is bounded by a dashed path.  The result that the former has twice the area of the latter can be proven with elementary geometry.  For example, Figure \ref{figareas} shows a subdivision of both regions in the triangle case into three pieces.  It is clear that each piece of $\H_t$ has twice the area of the corresponding piece of the triangle.
\end{proof}

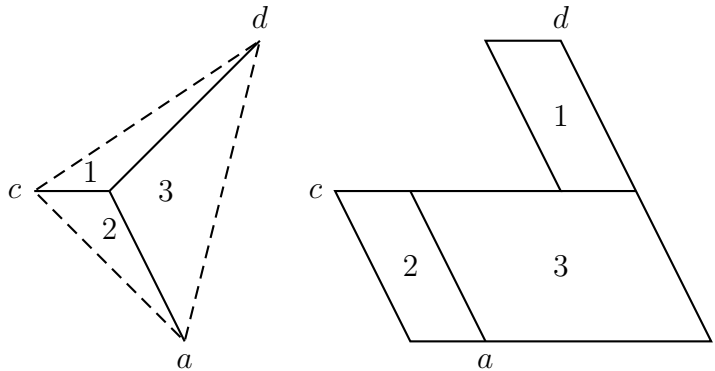
\begin{figure}
\begin{pspicture}(10,5)
\pspolygon[linestyle=dashed](3,.5)(1,2.5)(4,4.5)
\psline(3,.5)(2,2.5)
\psline(1,2.5)(2,2.5)
\psline(4,4.5)(2,2.5)
\uput[d](3,.5){$a$}
%\uput[r](2.5,1.5){$b$}
\uput[l](1,2.5){$c$}
\uput[u](4,4.5){$d$}
\rput(1.75,2.75){1}
\rput(2,2){2}
\rput(2.75,2.5){3}

\rput(4,0){
\pspolygon(2,.5)(1,2.5)(4,2.5)(3,4.5)(4,4.5)(6,.5)
\psline(3,.5)(2,2.5)
\psline(4,2.5)(5,2.5)
\uput[d](3,.5){$a$}
%\uput[r](2.5,1.5){$b$}
\uput[l](1,2.5){$c$}
\uput[u](4,4.5){$d$}
\rput(4,3.5){1}
\rput(2,1.5){2}
\rput(4,1.5){3}
}
\end{pspicture}
\caption{The regions of Figure \ref{figtri} separated and subdivided to illustrate that the larger one has twice the area of the smaller one.}
\label{figareas}

\end{figure}

It remains to show that $\#\H$ equals the area of $\H_{t}$.  Let us compute the same integral in two ways.
On the one hand, it is clear that 
$$\int_{t=0}^{N} \#\H_{t} d t  = \int_{t=0}^{N} \#\H d t = N \#\H.$$
On the other hand, let us swap the order of summation and integration: denoting $\bullet$ the integer points inside regions, we have:
$$\int_{t=0}^{N} \#\H_{t} d t  = \int_{t=0}^{N} (\sum_{\bullet \in \H_{t}} 1) d t = \sum_{\bullet \in \cup_{0 \leq t \leq N} \H_t} \int_{\{t \mid 0 \leq t \leq N, \; \bullet \in \H_t\}} 1 d t.$$ 
For each integer point $\bullet \in \cup_{0 \leq t \leq N} \H_t$ denote $\ell(\bullet)$ the length of the intersection of the horizontal line through $\bullet$ with $\H_t$. This value clearly does not depend on $t$.
For all but $o(N)$ integer points $\bullet \in \cup_{0 \leq t \leq N} \H_t$ we have $\int_{\{t \mid 0 \leq t \leq N, \; \bullet \in H_t\}} 1 d t = \ell(\bullet)$. Therefore 
$$\sum_{\bullet \in \cup_{0 \leq t \leq N} \H_t} \int_{\{t \mid 0 \leq t \leq N, \; \bullet \in H_t\}} 1 d t = o(N) + \sum_{\bullet \in \cup_{0 \leq t \leq N} \H_t} \ell(\bullet).$$
Each of the $\H_t$ can be cut by horizontal lines into one, two or three parallelograms. Let $h_i$ be their heights and let $\ell_i$ be their horizontal lengths. It is clear that for any point 
$\bullet$ we have $\ell(\bullet) = \ell_i$ for the appropriate parallelogram number $i$. It is also clear that $\sum_i h_i \ell_i$ is the area of $\H_t$, equal to $D(S)+1$ as explained above.
Now, each integer horizontal line in $\bar R$ contains $N + o(N)$ integer points. Furthermore, the number of such lines that have $\ell(\bullet) = \ell_i$ for points $\bullet$ on the line is exactly $h_i$.
This means that 
$$o(N) + \sum_{\bullet \in \cup_{0 \leq t \leq N} \H_t} \ell(\bullet) = o(N) + \sum_i N h_i \ell_i = o(N) + N (D(S)+1).$$ Equating it with our first way to evaluate the integral we see that 
$$\#\H = o(N)/N + (D(S)+1),$$ where passing to the limit $N \rightarrow \infty$ we obtain the desired result. 

\subsection{Proof of Theorem \ref{thmmain}}
We are now ready to prove Theorem \ref{thmmain}.  Let $S=\{a,b,c,d\}$ be a $Y$-pin and $D \geq 2$.  First suppose $D \leq D(S)$.  To prove that
\begin{displaymath}
T_{D,S}:X_{D,S} \to X_{D,S}
\end{displaymath}
is an $S$-map, it remains to show that generic elements of $X_{D,S}$ span $\mathbb{RP}^D$.  Let $(f,g,H_t)$ be the filtration guaranteed by Proposition \ref{propH}.  Then $|H_0| = D(S)+1 > D$.  By the proof of Proposition \ref{propfilter}, an element $A \in X_{D,S}$ can have arbitrary vertices $A_r$ for $r \in H_0$.  Choosing these $D(S)+1$ points generically, they will span all of $\mathbb{RP}^D$.

On the other hand, suppose $D > D(S)$.  Any $Y$-mesh $(P_{i,j})_{i,j \in \mathbb{Z}}$ would satisfy all (P1), (P2), and (L3) conditions, so its restriction to $R$ would be an element $A \in X_{D, S}$.  By the proof of Proposition \ref{propfilter}, all vertices of $A$ lie in the span of the $A_r$ with $r \in H_0$ which has dimension at most $|H_0|-1 = D(S) < D$.  So all vertices of $A$ lie in some proper subspace.  As explained in Section \ref{secdefs}, all other points of the $Y$-mesh can be computed recursively via \eqref{eqF2} and \eqref{eqF2inv}.  Hence all $P_{i,j}$ lie in the subspace which violates the definition of a $Y$-mesh.

\section{Decreasing the order in two dimensions} \label{secorder}
We have focused on constructing $S$-maps of given type $S$ and dimension $D$, when possible.  Rather than also determine the possible orders $m$ for the maps, we have simply fixed $m=d_2-a_2$, which works.  It is natural to look for the smallest order possible, but this value is not known in general.  In this section, we address the case $D=2$ proving that the minimal order is $m=\max(c_2-a_2,d_2-b_2)$.

\begin{ex} \label{exF2cont}
Recall Example \ref{exF2} in which $S=\{(-1,1), (1,2), (0,3), (0,4)\}$ and $D=2$.  Let $A_i=P_{i,1}$, $B_i=P_{i,2}$, and $C_i=P_{i,3}$ for all $i \in \mathbb{Z}$.  For $(A,B,C) \in X_{2,S}$, the (L1) relation says $A_{i-1}, B_{i+1}, C_i$ are collinear and the (L2) relation says $A_{i+1}, B_i, C_i$ are collinear.  Therefore, $A$ and $B$ can be arbitrary, and for a generic choice of such, $C$ is uniquely determined by
\begin{equation} \label{eqG2ex}
C_i = \meet{\join{A_{i-1}}{B_{i+1}}}{\join{A_{i+1}}{B_i}}.
\end{equation}
Hence $T_{2,S}$ has a smaller order analogue $F: \mathcal{U}_{2,2} \to \mathcal{U}_{2,2}$ given by $F(A,B) = (B,C)$ for $C$ as in \eqref{eqG2ex}.  
\end{ex}

The general construction depends on which of $c_2-a_2$ and $d_2-b_2$ is larger.  We present the case $d_2-b_2 \geq c_2-a_2$.  For the remainder of the section, let $m'=d_2-b_2$.  Define a new space $X' \subseteq \mathcal{U}_{2,m'}$ by (L1) relations alone, that is $A \in X'$ if and only if (L1) holds for all $r$ with $-a_2 < r_2 \leq m'-c_2$.  

Now for $A \in X'$ define $F(A) = (A^{(2)},\ldots, A^{(m'+1)})$ where $A^{(m'+1)}$  is defined by
\begin{equation} \label{eqG2}
P_{r+c+d} = \meet{\join{P_{r+2c}}{P_{r+b+c}}}{\join{P_{r+a+d}}{{P_{r+b+d}}}}
\end{equation}
for $r \in \mathbb{Z}^2$ with $r_2=m'+1-c_2-d_2$.  Define $G(A) = (A^{(0)},\ldots, A^{(m'-1)})$ where $A^{(0)}$ is defined by 
\begin{equation} \label{eqG2inv}
P_{r+a+b} = \meet{\join{P_{r+a+c}}{P_{r+a+d}}}{\join{P_{r+b+c}}{{P_{r+2b}}}}.
\end{equation}

\begin{prop}
For generic $A \in X'$, $F(A)$ and $G(A)$ are also in $X'$, and $F,G:X' \to X'$ are inverse as rational maps.  Moreover, $F$ is an $S$-map of order $m'$.  
\end{prop}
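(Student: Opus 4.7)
The plan is to mimic the proof of Proposition~\ref{propT2S}, adjusted for the asymmetric shape of \eqref{eqG2}, \eqref{eqG2inv} and the fact that only (L1) relations are imposed on $X'$ (there are no (L2) relations available, which is compensated for by making use of \eqref{eqG2} itself at shifts other than the defining one).

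First I would verify $F(A), G(A) \in X'$. The only new (L1) relations required are those at $r_2 = m'+1-c_2$ (for $F(A)$) and at $r_2 = -a_2$ (for $G(A)$). In each case the desired collinearity is read directly off the defining equation by reindexing: substituting $s = r-d$ in \eqref{eqG2} rewrites the intersection so that $P_{r+c} \in \join{P_{r+a}}{P_{r+b}}$, and substituting $s = r-b$ in \eqref{eqG2inv} yields $P_{r+a} \in \join{P_{r+b}}{P_{r+c}}$.

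Next I would show $F$ and $G$ are mutually inverse. For $G \circ F = \mathrm{id}$, \eqref{eqG2inv} must hold at shifts $r$ with $r_2 = 1-a_2-b_2$. The first incidence $P_{r+a+b} \in \join{P_{r+a+c}}{P_{r+a+d}}$ comes from reading \eqref{eqG2} at $s = r+a-c$, which is a defining shift of $F$ because $s_2 = 1-b_2-c_2 = m'+1-c_2-d_2$ (using $m' = d_2-b_2$). The second incidence $P_{r+a+b} \in \join{P_{r+b+c}}{P_{r+2b}}$ is (L1) at $s = r+b$; the shift $s_2 = 1-a_2$ lies in the post-$F$ (L1) range $-a_2 < s_2 \leq m'+1-c_2$ thanks to the hypothesis $m' \geq c_2-a_2$. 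The proof of $F \circ G = \mathrm{id}$ is dual: \eqref{eqG2inv} at $r^* = s+c-a$ (a defining shift of $G$) gives one incidence, and (L1) at $s+d$ in the post-$G$ grid gives the other.

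Finally, for the $S$-map property, iterating $F$ and $F^{-1}=G$ fills out an infinite grid in which (L1) and \eqref{eqG2} hold at every shift (the former by iterating the first step; the latter because every row is built as some instance of \eqref{eqG2}). For any $r$, (L1) gives $P_{r+a}, P_{r+b}, P_{r+c}$ collinear and \eqref{eqG2} applied at $s = r-c$ places $P_{r+d}=P_{s+c+d}$ on $\join{P_{s+2c}}{P_{s+b+c}} = \join{P_{r+c}}{P_{r+b}}$, establishing collinearity of all four points; generic distinctness is then automatic. The main technical obstacle is bookkeeping the ranges of (L1) validity across the pre- and post-map grids; the single inequality $m' \geq c_2-a_2$ is exactly what is needed to keep the relevant shifts in range, and the boundary case $m' = c_2-a_2$ is handled uniformly because the relevant (L1) then appears among the \emph{new} relations added in the first step rather than among the original ones.
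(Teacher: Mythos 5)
Your proposal is correct and follows essentially the same route as the paper: the paper's own proof simply defers the first statement to the argument of Proposition \ref{propT2S} and then observes, exactly as you do in your final step, that \eqref{eqG2} itself supplies the missing (L2)-type collinearity $P_{r+b}, P_{r+c}, P_{r+d}$ (up to a shift by $c$), so that iterating $F$ and $G$ yields the full four-point collinearity. Your bookkeeping of the (L1) ranges and of where the hypothesis $m' \geq c_2 - a_2$ enters is a faithful, more detailed rendering of the details the paper leaves implicit.
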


\begin{proof}
The proof of the first statement is quite similar to that of Proposition \ref{propT2S}.  To get that $F$ is an $S$-map, it is necessary to show that upon iteration that (L2) relations hold.  By \eqref{eqG2}, we get $P_{r+b+c}, P_{r+2c}, P_{r+c+d}$ are collinear, which up to shifts is the (L2) relation.
\end{proof}

\begin{ex} \label{exkangaroo}
Let $S=\{(1,0),(1,1),(0,2),(2,4)\}$.  Then there is an order $\max(3-1,5-2)=3$ map $F:X' \to X'$ when $D=2$.  The map is pictured in Figure \ref{figexorder}.  The space $X' \subseteq \mathcal{U}_{2,3}$ is defined by the relations
\begin{displaymath}
P_{i,1}, P_{i,2}, P_{i-1,3}
\end{displaymath}
collinear, and $F$ is defined by
\begin{displaymath}
P_{i,4} = \meet{\join{P_{i-1,1}}{P_{i-2,2}}}{\join{P_{i+1,2}}{P_{i+1,3}}}.
\end{displaymath}
\end{ex}

\begin{figure}
\begin{pspicture}(0,.5)(14,5.5)
\rput(1.5,0){
\pnode(0,1){a}
\pnode(0,2){b}
\pnode(-1,3){c}
\pnode(1,5){d}
\psdots(a)(b)(c)(d)
\uput[d](a){$a$}
\uput[d](b){$b$}
\uput[d](c){$c$}
\uput[d](d){$d$}
}
\rput(4,1){
\multirput(0,0)(1,0){7}{\multirput(0,0)(0,1){3}{\psdots(1,1)}}
\psaxes[axesstyle=none,ticks=none,showorigin=false,labelsep=0]{->}(7,3)
\uput[d](.3,0){$i$}
\uput[l](0,.3){$j$}
\psline(3,1)(2,2)
\psline(5,2)(5,3)
\psdots[dotstyle=asterisk](4,4)
\rput(7,0){
\psbezier(0   ,.8 )(-.33,.8 )(-.03,1.61)(-.18,1.91)
\psbezier(-.18,1.91)(-.33,2.21)(-1.38,2.91)(-1.14,3.14)
\psbezier(-1.14,3.14)(-.91,3.38)(.03,2.39)(.18,2.09)
\psbezier(.18,2.09)(.33,1.79)(.33,.8 )(0   ,.8 )
}
}
\end{pspicture}
\caption{The $S$-map for $S=\{(1,0),(1,1),(0,2),(2,4)\}$.}
\label{figexorder}
\end{figure}
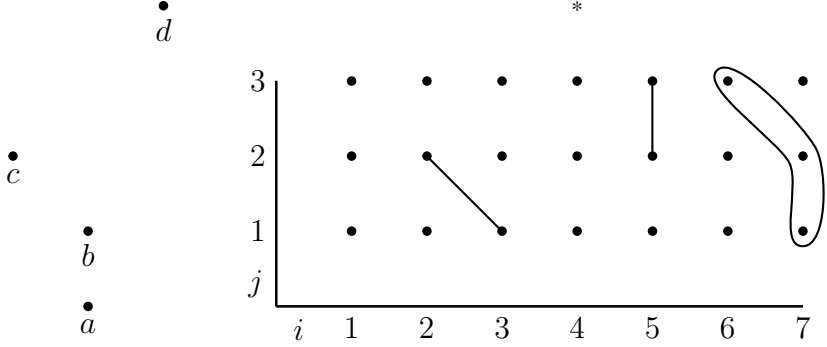

Nothing is lost going from the original $S$-map $T_{2,S}: X_{2,S} \to X_{2,S}$ of order $m$ to the newly defined map $F: X' \to X'$ which has order $m'$.  More precisely, an element $A \in X_{2,S}$ can be determined by its first $m'$ rows by applying $F$ a sufficient number ($m-m'$) of times.  For general $D$ with $2 \leq D \leq D(S)$, let $m_D(S)$ denote the smallest order to which the map $T_{D,S}$ can be reduced in this way.

\begin{prop}
For any $S$ with $D(S) \geq 2$, $m_2(S) = \max(c_2-a_2, d_2-b_2)$.
\end{prop}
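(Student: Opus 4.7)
The upper bound $m_2(S) \le m' := \max(c_2-a_2,\, d_2-b_2)$ is immediate from the construction preceding the proposition, so the entire task is establishing the lower bound $m_2(S) \ge m'$. My plan is a degree-of-freedom count, structured as a proof by contradiction. Using the time-reversal equivalence $(i,j) \mapsto (i,-j)$ of Remark \ref{rmkequiv}, which interchanges $a \leftrightarrow d$ and $b \leftrightarrow c$ and hence swaps the quantities $c_2-a_2$ and $d_2-b_2$ while preserving the order of the associated $S$-map, I may assume throughout that $m' = d_2 - b_2 \ge c_2 - a_2$.

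Suppose for contradiction that an $S$-map $F : X \to X$ of order $m < m'$ exists, with $X \subseteq \mathcal{U}_{2,m}$. For generic $A = (A^{(1)}, \ldots, A^{(m)}) \in X$, iterating $F$ produces a $Y$-mesh $(P_{i,j})$ extending $A$, so every point $P_{i,m+1}$ must be a well-defined rational function of the first $m$ rows. Since $P_{i,m+1} \in \mathbb{RP}^2$ has two degrees of freedom, unique determination from the first $m$ rows requires two independent linear constraints coming from the $Y$-mesh axioms. In dimension two these axioms reduce entirely to the collinearity relations (L1) and (L2) (the coplanarity (P3) being vacuous), so the relevant count reduces to a direct index check: the only (L1) or (L2) relation that involves $P_{i,m+1}$ together with vertices lying only in rows $1,\ldots,m$ is one in which $P_{i,m+1}$ is the point with the largest row coordinate. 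This forces either $P_{i,m+1} = P_{r+c}$ in an (L1) with $r_2 = m+1-c_2$, which is available iff $m \ge c_2 - a_2$, or $P_{i,m+1} = P_{r+d}$ in an (L2) with $r_2 = m+1-d_2$, which is available iff $m \ge d_2-b_2 = m'$. Any other placement of $P_{i,m+1}$ in an (L1) or (L2) would push some other vertex of the relation into a row strictly greater than $m$. Since $m < m'$, the (L2) option is ruled out, so at most one constraint acts on $P_{i,m+1}$, leaving at least one degree of freedom.

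To turn this residual degree of freedom into a concrete contradiction, I would construct two distinct $Y$-meshes sharing the same first $m$ rows. Starting from a generic $Y$-mesh $P$, I perturb $P_{i,m+1}$ along its (L1) constraint line (or freely in $\mathbb{RP}^2$ when even (L1) does not fit, i.e.\ when $m < c_2-a_2$), then complete rows $m+1, \ldots, m'$ compatibly with the remaining (L1) relations defining $X'$, and finally extend to a full $Y$-mesh $P'$ by means of the already-established order-$m'$ $S$-map on $X'$. The existence of $P \ne P'$ with identical first $m$ rows contradicts the rational determination of the $Y$-mesh by $F$. The main obstacle in the argument is exactly this final extension step: one needs to know that the perturbed $m'$-tuple in $X'$ really does propagate to a genuine $Y$-mesh satisfying both (L1) and (L2) throughout. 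This, however, is precisely the content of the well-definedness of the order-$m'$ $S$-map proved in the preceding subsection, so the contradiction is legitimate and the proof is complete.
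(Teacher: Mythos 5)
Your proof is correct and follows essentially the same route as the paper: after the same reduction to the case $m' = d_2-b_2 \geq c_2-a_2$, both arguments observe that a point in the top row of the window participates in at most one collinearity relation fitting within the available rows (the (L2)-type relation needing $m \geq d_2-b_2$), so it retains a free parameter along a line and the preceding rows cannot determine it. The paper states this only for a point in row $m'$ relative to rows $1,\ldots,m'-1$ (which suffices, since determinacy from fewer rows would imply determinacy from $m'-1$ rows), whereas you carry out the same count for arbitrary $m<m'$ and spell out the extension to a full $Y$-mesh via the order-$m'$ map; this is a harmless elaboration, not a different method.
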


\begin{proof}
We continue without loss of generality in the case $m' = d_2-b_2 \geq c_2-a_2$.  The map $F: X' \to X'$ is an order $m'$ reduction of $T_{2,S}$.  Let $A=(A^{(1)}, \ldots, A^{(m')}) \in X'$ and consider some $P_r$ where $r_2=m'$.  The only (L1) relation in which this point takes part is $P_{r+a-c}, P_{r+b-c}, P_r$ collinear.  As such $P_r$ can be replaced by any other point on that line without violating $A \in X'$.  Therefore $A^{(m')}$ cannot be determined from $A^{(1)}, \ldots, A^{(m'-1)}$.
\end{proof}

We have shown $m_2(S) = \max(c_2-a_2,d_2-b_2)$.  In general, it seems $m_2(S) \geq m_3(S) \geq \ldots \geq m_{D(S)}(S)$.  The $m_D(S)$ for $D > 2$ seem mysterious in general, although we address several examples in Section \ref{seczoo}.

\begin{rmk}
In Section \ref{secex}, we related the $S$-maps for $S$ horizontal to an $(I,J)$-map in a certain dimension $D$.  The $(I,J)$-maps are by definition order 1, so it is tempting to say $m_D(S)=1$ in this case.  However, the $(I,J)$-map skips several rows of the $Y$-mesh which we do not allow.
\end{rmk}  

\section{Fractals} \label{secfractal}

\begin{defin}
Let $S = \{a,b,c,d\}$ be a $Y$-pin. 
 A {\it {$k$-fractal}} $f$ associated with $S$ is a collection of the following points:
$$f = \{r + \alpha a + \beta b + \gamma c + \delta d : 0 \leq \alpha, \beta, \gamma, \delta; \alpha + \beta + \gamma + \delta = k\}.$$
\end{defin}
Thus, a $0$-fractal is just a single point in $\mathbb Z^2$. A $1$-fractal is any of the equivalent $Y$-pins 
$$f = \{r+a, r+b, r+c, r+d\}.$$
A $2$-fractal looks as follows:
$$f = \{r+2a, r+a+b, r+a+c, r+2b, r+a+d, r+b+d,r+b+c,r+2c,r+c+d,r+2d\},$$ etc. 

\begin{ex}
Figure \ref{figgiraffe123} shows 1, 2, and 3-fractals for $S = \{(0,0), (2,0), (1,1), (2,3)\}.$
\end{ex}

\begin{figure}
\begin{pspicture}(0,-.5)(9,5)
\psdots(0,0)(1,0)(.5,.5)(1,1.5)
\rput(2.5,0){
\psdots(0,0)(1,0)(2,0)(.5,.5)(1.5,.5)(1,1)(1,1.5)(2,1.5)(1.5,2)(2,3)
}
\rput(6,0){
\psdots(0,0)(1,0)(2,0)(3,0)(.5,.5)(1.5,.5)(2.5,.5)(1,1)(2,1)(1,1.5)(1.5,1.5)(2,1.5)(3,1.5)(1.5,2)(2.5,2)(2,2.5)(2,3)(3,3)(2.5,3.5)(3,4.5)
%\psframe(-.3,-.3)(2.3,3.3)
%\psframe(.7,1.2)(3.3,4.8)
%\pspolygon(.6,-.2)(3.2,3.6)(3.2,-.2)(1.8,-.2)(1.5,.2)(1.2,-.2)
%\pspolygon(.6,1.3)(3.2,5.1)(3.2,1.3)(1.8,1.3)(1.5,1.7)(1.2,1.3)
\multirput(0,0)(0,1.5){2}{
\pspolygon(.6,-.2)(3.2,3.6)(3.2,-.2)(2.8,-.2)(2.5,.2)(2.2,-.2)(1.8,-.2)(1.5,.2)(1.2,-.2)}
}
\end{pspicture}  
\caption{The 1, 2, and 3-fractals for $S = \{(0,0), (2,0), (1,1), (2,3)\}$.  The 3-fractal is made of four overlapping 2-fractals (two of which are outlined) each pair of which intersects at a 1-fractal.}
\label{figgiraffe123}
\end{figure}
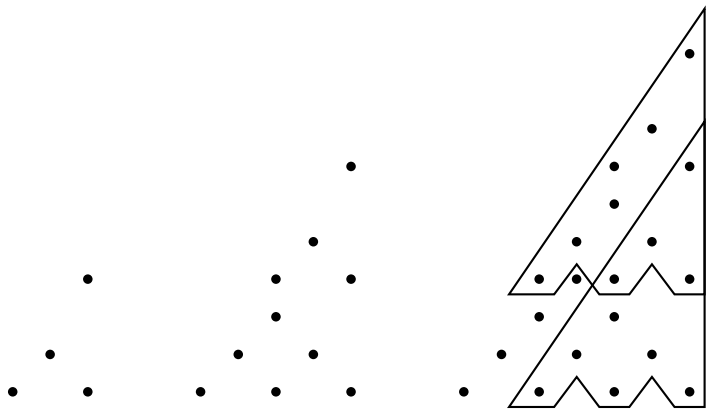

Let $f = \{r + \alpha a + \beta b + \gamma c + \delta d\}$ be a $k$-fractal as above. We can consider four distinguished $(k-1)$-fractals inside of it:
$f_a = \{r + \alpha a + \beta b + \gamma c + \delta d \in f : 0 < \alpha\}$, same for $f_b, f_c, f_d$. We say that those four $(k-1)$-fractals {\it {belong}}
to the $k$-fractal $f$. 

\begin{lem} \label{lem:fabcd}
 If two distinct $(k-1)$-fractals $f_1, f_2$ belong to a $k$-fractal $f$, then their intersection $f_1 \cap f_2$ is a $(k-2)$-fractal belonging to each of them. 
\end{lem}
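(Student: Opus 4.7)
My plan is to prove the lemma by a direct reindexing of the parametrizations, using the symmetry of the definition in the four letters $a,b,c,d$. Since there are six possible pairs $\{f_1,f_2\} \subset \{f_a,f_b,f_c,f_d\}$ and all are handled identically up to relabeling, I will fix $f_1 = f_a$ and $f_2 = f_b$ throughout.

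First, I would unpack $f_a$ and $f_b$ as fractals in their own right. Substituting $\alpha = 1 + \alpha'$ in the definition of $f_a$ gives
\[
f_a = \{(r+a) + \alpha' a + \beta b + \gamma c + \delta d : \alpha',\beta,\gamma,\delta \geq 0,\ \alpha' + \beta + \gamma + \delta = k-1\},
\]
so $f_a$ is the $(k-1)$-fractal based at $r+a$, and similarly $f_b$ is the $(k-1)$-fractal based at $r+b$. This is a sanity check that the four distinguished sub-fractals are themselves genuine $(k-1)$-fractals, just translated.

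Next, I would compute the intersection. A point of $f_a \cap f_b$ admits a parametrization with $\alpha \geq 1$ and one with $\beta \geq 1$; combining both constraints (substituting $\alpha = 1+\alpha'$ and $\beta = 1+\beta'$) yields
\[
f_a \cap f_b = \{(r+a+b) + \alpha' a + \beta' b + \gamma c + \delta d : \alpha',\beta',\gamma,\delta \geq 0,\ \alpha'+\beta'+\gamma+\delta = k-2\},
\]
which is precisely the $(k-2)$-fractal based at $r+a+b$. The claim of belonging then follows by observing that this same set is, by definition, the distinguished sub-fractal $(f_a)_b$ inside the $(k-1)$-fractal $f_a$ based at $r+a$ (the locus where the new $b$-index is at least $1$), and is equally $(f_b)_a$ inside $f_b$.

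The main subtlety I expect is justifying the step in which two separate parametrizations (one showing $p \in f_a$, one showing $p \in f_b$) are merged into a single parametrization with both $\alpha \geq 1$ and $\beta \geq 1$. When the parametrization $(\alpha,\beta,\gamma,\delta) \mapsto r + \alpha a + \beta b + \gamma c + \delta d$ is injective on tuples of fixed total $k$ this is automatic, but in the presence of the primitive convex relation of $S$ (see Section \ref{seclattice}) one must argue using its explicit sign pattern that such a combined parametrization always exists; the three cases -- long-diagonal quadrilateral, long-side quadrilateral, and triangle -- would each be checked by a short case analysis on the signs of the relation's coefficients.
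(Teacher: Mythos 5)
Your reindexing computation is exactly the paper's (one-line) verification: the paper likewise identifies $f_a\cap f_b$ with the locus $\{r+\alpha a+\beta b+\gamma c+\delta d\in f: 0<\alpha,\beta\}$, i.e.\ the $(k-2)$-fractal based at $r+a+b$, and your identification of that set with $(f_a)_b=(f_b)_a$ is the intended reading of ``belonging.'' The problem is the subtlety you flag at the end. It is real, but the resolution you propose --- a case analysis on the signs of the primitive convex relation showing that a merged parametrization with $\alpha\geq 1$ and $\beta\geq 1$ always exists --- cannot be carried out, because the statement it would establish is false. Take the pentagram pin $S=\{(0,0),(2,0),(0,1),(1,1)\}$, whose primitive convex relation is $-a+b+2c-2d=0$ (magnitude $3=D(S)+1$), and $k=3$: the point $a+2d=b+2c$ has exactly the two representations $(1,0,0,2)$ and $(0,1,2,0)$, so as a point it lies in both $f_a$ and $f_b$, yet it admits no representation with $\alpha,\beta\geq 1$ and does not lie in the $1$-fractal based at $a+b$. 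The mechanism is general: when $a$ and $b$ fall in different blocks of the partition of $S$ induced by the convex relation, the coordinates $\alpha$ and $\beta$ move in opposite directions along the one-parameter family of representations of a point, so the two constraints can be individually but not simultaneously satisfiable; only for a same-block pair does monotonicity let you merge by taking an extreme representative.

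What actually closes the gap, and is cleaner than a sign analysis, is the observation that the parametrization is injective on tuples of total $k$ whenever $k\leq D(S)$: two representations of the same point differ by a nonzero integer multiple of the primitive convex relation, whose positive part sums to $M(S)=D(S)+1>k$ by Proposition \ref{proplattice}, which exceeds the total weight $k$ available to absorb the negative part. This covers every invocation of the lemma in Lemma \ref{lem:dgen} (where $k=i+1\leq D(S)$). In the boundary case $k=D(S)+1$, which can occur in the proof of Proposition \ref{thm:dgen}, one only gets the containment of the $(k-2)$-fractal in $f_1\cap f_2$, but that one-sided containment (together with $f_1\cap f_2\subseteq f_1, f_2$) is all the subsequent dimension counts use. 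So you should either add the injectivity remark and restrict the range of $k$, or weaken the conclusion to a containment; the deferred ``short case analysis'' as you describe it is a dead end.
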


\begin{proof}
 Easy verification from the definition. For example, intersection of $f_a$ and $f_b$ is $(k-2)$-fractal $\{r + \alpha a + \beta b + \gamma c + \delta d \in f : 0 < \alpha, \beta\}$.
\end{proof}

\begin{ex}
On the right in Figure \ref{figgiraffe123}, two of the 2-fractals are outlined, and their intersection is a 1-fractal.  \end{ex}

The informal meaning of $k$-fractals is as follows: those collections of points in a $Y$-mesh have to lie in a $k$-dimensional affine subspace of the whole space. 
For example, $1$-fractals are $Y$-pins, which by definition are quadruples of points that lie on one line.
To make this informal meaning rigorous however we usually need to make certain genericity assumptions about the $Y$-mesh. 

Let $S$ be a $Y$-pin and let $P$ be a $Y$-mesh of type $S$ and dimension $D \leq
D(S)$. For any finite set $A \subset Z^2$ denote $d_P(A)$ the dimension of the
affine span $$\overline A = \langle \{ P_r : r \in A\} \rangle$$ of points associated with elements of $A$ in $P$.

\begin{defin}
We say that a $Y$-mesh $P$ is {\it {$d$-generic}} for some $d \leq D$ if for any $i$-fractal $f$ with $i
\leq d$ we have $d_P(f) = i$.
\end{defin}

\begin{lem} \label{lem:dgen}
 If a $Y$-mesh $P$ is $d$-generic and $i < d$, then any two $i$-fractals $f_1$, $f_2$
belonging to the same $(i+1)$-fractal $f$ have distinct affine spans.
\end{lem}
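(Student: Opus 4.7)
The plan is to argue by contradiction: if two distinct $i$-fractals $f_1, f_2$ belonging to an $(i+1)$-fractal $f$ had $\overline{f_1} = \overline{f_2} = V$, then by $d$-genericity applied to $f_1$ and $f_2$ (using $i \leq d$) the affine subspace $V$ has dimension exactly $i$, while $\overline{f}$ has dimension $i+1$ (applying $d$-genericity to $f$, using $i+1 \leq d$). The plan is to show $f \subseteq V$, producing the contradiction $i+1 = \dim\overline{f} \leq i$.

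First I would reduce, by the symmetry of the argument under relabeling the four elements of $S$, to the case $f_1 = f_a$ and $f_2 = f_b$. Next, I would fix an arbitrary lattice point $q = r + \alpha a + \beta b + \gamma c + \delta d$ with $\alpha, \beta, \gamma, \delta \geq 0$ and $\alpha+\beta+\gamma+\delta = i$. The quadruple $\{q+a, q+b, q+c, q+d\}$ is then a translate of the $Y$-pin $S$ sitting inside $f$, so by the $Y$-mesh definition the four points $P_{q+a}, P_{q+b}, P_{q+c}, P_{q+d}$ are distinct and collinear. Since $q+a \in f_a$ and $q+b \in f_b$, both $P_{q+a}$ and $P_{q+b}$ lie in $V$; being distinct they span a unique line, and that line is contained in $V$. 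The remaining two collinear points $P_{q+c}, P_{q+d}$ lie on the same line, hence in $V$.

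Finally, I would observe that as $q$ ranges over all such positions, $q+c$ sweeps over precisely the points of $f_c$ (solve $\gamma' = \gamma+1$) and $q+d$ sweeps over precisely the points of $f_d$. Thus $f_c, f_d \subseteq V$, and combined with the inclusions $f_a, f_b \subseteq V$ this yields $f = f_a \cup f_b \cup f_c \cup f_d \subseteq V$, the desired contradiction.

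The only obstacle is a borderline case check rather than a conceptual issue: when $i = 0$, the subspace $V$ is a single point, so the step of passing to "the line through $P_{q+a}$ and $P_{q+b}$" is not literally available. However, in that case the hypothesis $\overline{f_1} = \overline{f_2}$ already reads $P_{r+a} = P_{r+b}$, directly contradicting the distinctness clause of the $Y$-mesh definition applied to the $Y$-pin $\{r+a,r+b,r+c,r+d\} = f$. So the argument handles this boundary case with only a trivial separate remark.
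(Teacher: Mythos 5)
Your proof is correct, and it takes a genuinely different route from the paper's. The paper argues by induction on $i$: assuming $\overline{f_a}=\overline{f_b}$, it invokes Lemma \ref{lem:fabcd} to produce the two distinct $(i-1)$-fractals $f_c\cap f_a$ and $f_c\cap f_b$ inside $f_c$, applies the inductive hypothesis to conclude their spans are distinct, and then a dimension count forces $\overline{f_c}=\overline{f_c\cap f_a}+\overline{f_c\cap f_b}\subseteq\overline{f_a}$, and likewise for $\overline{f_d}$, contradicting $d$-genericity of $f$. You reach the same contradiction ($f\subseteq V$ with $\dim V=i$) without induction and without the intersection lemma: for each $q$ in the $i$-fractal based at $r$, the translate $\{q+a,q+b,q+c,q+d\}$ of $S$ sits inside $f$ with $q+a\in f_a$, $q+b\in f_b$, so the line $L_q$ through the distinct points $P_{q+a},P_{q+b}$ lies in $V$ and carries $P_{q+c},P_{q+d}$ into $V$; sweeping $q$ covers all of $f_c$ and $f_d$. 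This is more elementary and directly exploits the defining collinearity of the mesh, whereas the paper's inductive dimension-count is of the same flavor as (and sets up the technique for) the subsequent Proposition \ref{thm:dgen}. Your separate treatment of the $i=0$ boundary case, where the ``line through two distinct points of $V$'' step degenerates, is the right and only caveat; the paper disposes of it identically by citing the distinctness clause in the definition of a $Y$-mesh.
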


\begin{proof}
 The proof is by induction on $i$, case $i=0$ being true by the definition of a $Y$-mesh. 
 
 Let us do one step of induction. Without loss of generality assume $f_1 = f_a$ and $f_2 = f_b$. 
 By $d$-genericity of $P$ we know that affine spans of $f_a$ and $f_b$ are $i$-dimensional. 
 Assume the claim of the lemma is false and those affine spans coincide. 
 
 Consider one of the 
 two remaining $i$-fractals that belong to $f$, for example $f_c$. By Lemma \ref{lem:fabcd}
 we know that $f_c \cap f_a$ and $f_c \cap f_b$ are $(i-1)$-fractals belonging to $f_c$. It is also 
 easy to see they are distinct. By $d$-genericity of $P$ their affine spans $\overline {f_c \cap f_a}$ and $\overline {f_c \cap f_b}$ are $(i-1)$-dimensional, 
 and by induction assumption they are distinct. Thus, their sum $\overline {f_c \cap f_a} + \overline {f_c \cap f_b}$ is at least an $i$-dimensional space.
 
 On the other hand, this sum is contained in the affine span of $f_c$, and thus is at most 
 $i$-dimensional. Therefore, this sum coincides with the affine span of $f_c$. However, sum $\overline {f_c \cap f_a} + \overline {f_c \cap f_b}$  
 lies in the $i$-dimensional space which is the affine span $\overline f_a = \overline f_b$ by our assumption. This is because $f_c \cap f_a \subset f_a$ and 
 $f_c \cap f_b \subset f_b$. Thus, $\overline f_c$ lies in this 
 $i$-dimensional space. Same argument shows  $\overline f_d$ lies there as well. This means however that the affine 
 span of $(i+1)$-fractal $f$ is $i$-dimensional, which contradicts $d$-genericity of $P$. The contradiction implies 
 our assumption was false and the lemma holds.  
\end{proof}

\begin{prop} \label{thm:dgen}
 If a $Y$-mesh $P$ is $d$-generic then for any $(d+1)$-fractal $f$ we have $d_P(f) \leq d+1$.
\end{prop}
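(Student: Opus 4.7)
My plan is to express the $(d+1)$-fractal $f$ as the union of the four $d$-fractals $f_a,f_b,f_c,f_d$ belonging to it and then to bound $d_P(f)$ by showing that every $\overline{f_x}$ lies inside the (affine) sum $\overline{f_a}+\overline{f_b}$, whose dimension I will bound by $d+1$. The base case $d=0$ can be disposed of immediately: a $1$-fractal is precisely a $Y$-pin, and by the very definition of a $Y$-mesh its four points are collinear, so $d_P(f)\le 1$. I will spend the rest of the argument on the case $d\ge 1$.

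The key set-theoretic identity $f=f_a\cup f_b\cup f_c\cup f_d$ is trivial: a point $r+\alpha a+\beta b+\gamma c+\delta d$ of $f$ satisfies $\alpha+\beta+\gamma+\delta=d+1\ge 1$, so at least one coefficient is positive. It follows that $\overline{f}=\overline{f_a}+\overline{f_b}+\overline{f_c}+\overline{f_d}$. My next step is to show $\overline{f_c}\subseteq \overline{f_a}+\overline{f_b}$: by Lemma \ref{lem:fabcd} the intersections $f_a\cap f_c$ and $f_b\cap f_c$ are two distinct $(d-1)$-fractals belonging to the $d$-fractal $f_c$, and Lemma \ref{lem:dgen} (applied at level $i=d-1<d$) says their affine spans are distinct. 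By $d$-genericity these spans each have dimension $d-1$, and they both sit inside the $d$-dimensional space $\overline{f_c}$, so their sum must fill $\overline{f_c}$. Since $\overline{f_a\cap f_c}\subseteq\overline{f_a}$ and $\overline{f_b\cap f_c}\subseteq\overline{f_b}$, I conclude $\overline{f_c}\subseteq\overline{f_a}+\overline{f_b}$, and the identical argument with $f_d$ in place of $f_c$ gives $\overline{f_d}\subseteq\overline{f_a}+\overline{f_b}$. Hence $\overline{f}=\overline{f_a}+\overline{f_b}$.

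To close out, I would bound the dimension of $\overline{f_a}+\overline{f_b}$. The $(d-1)$-fractal $f_a\cap f_b$ sits inside both $f_a$ and $f_b$, and by $d$-genericity $\dim\overline{f_a\cap f_b}=d-1$, so $\dim(\overline{f_a}\cap\overline{f_b})\ge d-1$. The elementary identity
\[
\dim(\overline{f_a}+\overline{f_b})=\dim\overline{f_a}+\dim\overline{f_b}-\dim(\overline{f_a}\cap\overline{f_b})\le d+d-(d-1)=d+1
\]
finishes the job. The subtle step that I expect to be the main obstacle is showing $\overline{f_c},\overline{f_d}\subseteq\overline{f_a}+\overline{f_b}$: this is exactly where I must invoke Lemma \ref{lem:dgen}, and crucially its hypothesis $i<d$ holds only because I apply it one level down (at $i=d-1$) inside the $d$-fractal $f_c$. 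A direct application to the level-$d$ fractals $f_a,f_b$ belonging to $f$ itself would be illegal, which is precisely why the conclusion is the upper bound $d+1$ rather than an equality.
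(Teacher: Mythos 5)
Your proof is correct and follows essentially the same route as the paper's: decompose the $(d+1)$-fractal into $f_a,f_b,f_c,f_d$, apply Lemmas \ref{lem:fabcd} and \ref{lem:dgen} one level down to force $\overline{f_c},\overline{f_d}\subseteq\overline{f_a}+\overline{f_b}$, and then bound the dimension of that join using the $(d-1)$-fractal $f_a\cap f_b$. The only differences are cosmetic: you treat the $d=0$ base case explicitly and replace the paper's case split (all four spans coincide vs.\ not) with a uniform application of the dimension formula, which is a slight tidying rather than a new idea.
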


\begin{proof}
 Consider the four $d$-fractals $f_a$, $f_b$, $f_c$ and $f_d$. If affine spans of all four coincide $\overline f_a = \overline f_b = \overline f_c = \overline f_d$, then in turn the affine span $\overline f$ of the whole $f$
 coincides with each of them. This is because $f = f_a \cup f_b \cup f_c \cup f_d$. Then by $d$-genericity of $P$ we conclude $d_P(f) = d < d+1.$
 
 Assume now not all of the four affine spans coincide. Without loss of generality consider the case when $\overline f_a$ and $\overline f_b$ are distinct. Consider the two $(d-1)$-fractals $f_c \cap f_a$
 and $f_c \cap f_b$. Its easy to see they are distinct, and by Lemma \ref{lem:dgen} so are their affine spans. In that case $\overline {f_c \cap f_a} + \overline {f_c \cap f_b}$ is $d$-dimensional 
 and must coincide with $\overline f_c$. Thus, $\overline f_c \subset \overline f_a + \overline f_b$. Similarly $\overline f_c \subset \overline f_a + \overline f_b$. This implies 
 $\overline f = \overline f_a + \overline f_b$. Now, $\overline f_a$ and $\overline f_b$ are two distinct $d$-dimensional spaces who's intersection is at least $(d-1)$-dimensional, since it is a 
 $(d-1)$-fractal and $P$ is $d$-generic. It is easy to see that in this case $\overline f_a + \overline f_b$ is $(d+1)$-dimensional, as desired. 
\end{proof}

\begin{prop}
 Any $Y$-mesh $P$ is $2$-generic.
\end{prop}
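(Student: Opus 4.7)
The plan is to verify $d_P(f)=i$ for all $i$-fractals $f$ with $i\in\{0,1,2\}$. The first two cases are built into the definition of a $Y$-mesh: a $0$-fractal is a single point so $d_P(f)=0$, while a $1$-fractal is (after translation) a $Y$-pin, and the first defining property of a $Y$-mesh asserts that $P_{r+a}, P_{r+b}, P_{r+c}, P_{r+d}$ are four distinct collinear points, giving $d_P(f)=1$. This establishes $1$-genericity.

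Having $1$-genericity in hand, I invoke Proposition \ref{thm:dgen} with $d=1$ to obtain the upper bound $d_P(f)\leq 2$ for every $2$-fractal $f$. For the matching lower bound, let $f$ be a $2$-fractal based at $r$. The four $1$-fractals $f_a, f_b, f_c, f_d$ belonging to $f$ are, after translation, $Y$-pins centered at $r+a, r+b, r+c, r+d$ respectively, so by $1$-genericity each has affine span exactly a line — namely $L_{r+a}, L_{r+b}, L_{r+c}, L_{r+d}$ in the notation of Definition \ref{defmesh}. It suffices to show $L_{r+a}\neq L_{r+b}$: both lines contain the point $P_{r+a+b}$ (which lies in $f_a \cap f_b$ by Lemma \ref{lem:fabcd}), so if they were distinct they would span a plane inside $\overline{f}$, forcing $d_P(f)\geq 2$ and completing the proof.

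To establish the distinctness, I apply the second defining property of a $Y$-mesh at $s=r+a+b$: the four lines $L_{s-a}, L_{s-b}, L_{s-c}, L_{s-d}$ are pairwise distinct. Unpacking, $L_{s-a}=L_{r+b}$ and $L_{s-b}=L_{r+a}$, so indeed $L_{r+a}\neq L_{r+b}$.

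The only non-routine step is excluding the degenerate configuration in which all four lines $L_{r+a}, L_{r+b}, L_{r+c}, L_{r+d}$ collapse onto one — something that would force $d_P(f)=1$ in the dichotomy within the proof of Proposition \ref{thm:dgen}. This is precisely what the second defining axiom of a $Y$-mesh forbids, so the axiom pulls double duty: it provides the line distinctness needed for the lower bound, and is the only nontrivial geometric input beyond what was already used for $1$-genericity.
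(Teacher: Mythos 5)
Your proof is correct and follows essentially the same route as the paper's: both rest on the observation that $\overline{f}_a = L_{r+a}$ and $\overline{f}_b = L_{r+b}$ meet at $P_{r+a+b}$ but are distinct by the second $Y$-mesh axiom, hence span a plane, while the remaining two lines each meet that plane in two distinct points and therefore lie inside it. The only cosmetic difference is that you obtain the upper bound $d_P(f)\leq 2$ by citing Proposition \ref{thm:dgen} with $d=1$ (legitimate, since its proof needs Lemma \ref{lem:dgen} only in the base case $i=0$, which is the distinctness of the four points on $L_r$), whereas the paper inlines that same argument directly.
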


\begin{proof}
 By definition of a $Y$-mesh, the dimension of any $Y$-pin $S = \{a,b,c,d\}$ in $P$ is $1$-dimensional: $P_a$, $P_b$, $P_c$ and $P_d$ lie on one line, but do not coincide. 
 This takes care of $1$-fractals. For any $2$-fractal $f$ note that lines $\overline f_a$ and $\overline f_b$ intersect (at point $r+a+b$) but do not coincide, again as one of the 
 requirements in the definition of $Y$-mesh. Then their affine span $\overline f_a + \overline f_b$ is $2$-dimensional. Now, each of the lines $\overline f_c$ and $\overline f_d$ must 
 similarly intersect but not coincide with $\overline f_a$, $\overline f_b$. Furthermore, points $P_{r+a+c}$ and $P_{r+b+c}$ of those intersections cannot coincide by one of the defining assumptions on $P$.
 This implies $\overline f_c$ must lie in the plane $\overline f_a + \overline f_b$, and same for $\overline f_d$. The claim that all $2$-fractals are $2$-dimensional in $P$ follows. 
\end{proof}

\begin{conj} \label{conjfract}
 If $D \leq D(S)$, then $D$-generic $D$-dimensional $Y$-meshes of
type $S$ exist.
\end{conj}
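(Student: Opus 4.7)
The plan is induction on $d$, from $d=2$ up to $d=D$, showing that the $d$-generic $Y$-meshes of type $S$ in dimension $D$ form a nonempty Zariski-dense open subset of the parameter space $X_{D,S}$. The base case $d=2$ is the preceding proposition. For the inductive step, $X_{D,S}$ is irreducible via the rational parameterization of Proposition \ref{propfilter} (free choice of $|H_0|=D(S)+1$ initial points followed by generic recursive extensions), and by Proposition \ref{thm:dgen} every $(d+1)$-fractal $f$ of a $d$-generic $Y$-mesh satisfies $d_P(f) \leq d+1$. Thus $(d+1)$-genericity amounts to the further open condition $d_P(f)=d+1$ for every $(d+1)$-fractal, cut out by the non-vanishing of a maximal minor of the coordinate matrix of $f$. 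The $\mathbb{Z}^2$-shift action on $X_{D,S}$ permutes $(d+1)$-fractals transitively, so only one representative condition must be analyzed; a Baire-type argument (valid in an irreducible variety over an uncountable field) then handles the countable intersection over all shift-translates.

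To exhibit the needed witness, my strategy is a reduction to the extremal case $D=D(S)$. A generic linear projection $\pi: \mathbb{RP}^{D(S)} \dashrightarrow \mathbb{RP}^D$ preserves collinearity and coplanarity and therefore sends $Y$-meshes of type $S$ in dimension $D(S)$ to $Y$-meshes of type $S$ in dimension $D$; it also preserves the dimension of every affine span of dimension at most $D$. Consequently a $D(S)$-generic $Y$-mesh in dimension $D(S)$, projected generically, yields a $D$-generic $Y$-mesh in dimension $D$. This reduces the conjecture to the case $D=D(S)$.

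The main obstacle is therefore the top-dimensional case. Unpacking the proof of Proposition \ref{thm:dgen}, the equality $d_P(f)=d+1$ holds whenever the four $d$-fractals $f_a, f_b, f_c, f_d$ belonging to $f$ do not all share a common $d$-dimensional affine span. Even though the $|H_0|=D+1$ initial points then form a projective frame (rigid up to projective equivalence), the recursive construction of Proposition \ref{propfilter} still contributes one or two free projective parameters at each circuit drawn from (L1), (L2), or (L3). My plan is to induct on the depth of $f$ in the filtration $(H_t)$, showing that each new parameter genuinely enlarges the span of the partial $Y$-mesh beyond the constraints required for coincidence of two of the four subfractals; combined with Lemma \ref{lem:dgen} applied to subfractals of $f_a, f_b, f_c, f_d$, this forces at least two of the four $d$-dimensional spans to be distinct. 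Making this algebraic independence precise --- either by a direct tangent-space computation along the sequence of recursive choices, or by specialization to small explicit examples such as the quiver of Figure \ref{figquiver} as a testing ground --- is the principal technical difficulty I foresee, and the main reason the statement is currently only a conjecture rather than a theorem.
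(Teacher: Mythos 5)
The paper does not prove this statement: it is stated as a conjecture and left open, so there is no proof of record to compare against. Your proposal, by your own admission, is also not a proof but a reduction strategy, and the point at which it stops is precisely the content of the conjecture.

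Concretely, the two reduction steps are plausible. Openness: within the locus of $d$-generic meshes, Proposition \ref{thm:dgen} gives $d_P(f) \leq d+1$ for every $(d+1)$-fractal, so $(d+1)$-genericity is the non-vanishing of a maximal minor for each of countably many fractals. Projection: a generic linear projection $\mathbb{RP}^{D(S)} \dashrightarrow \mathbb{RP}^{D}$ has center of dimension $D(S)-D-1$, which generically avoids each fixed span of dimension at most $D$, and over $\mathbb{R}$ a countable intersection of such generic conditions is still satisfiable. Together these would reduce the problem to exhibiting one $D(S)$-generic $Y$-mesh in dimension $D(S)$. But openness buys nothing without a witness, and your plan for the witness --- that each new parameter in the recursion of Proposition \ref{propfilter} "genuinely enlarges the span beyond the constraints required for coincidence of two of the four subfractals" --- is exactly the assertion requiring proof. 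Each new point is placed generically \emph{on the span of a prescribed circuit}, a strong constraint; a top-dimensional fractal $f$ is not contained in $H_t \cup H_{t+1}$ for any single $t$, so its points accumulate over many stages and the per-step freedom does not localize to $f$; and the separation of $\overline{f}_a,\dots,\overline{f}_d$ must hold simultaneously for infinitely many translates of $f$. You correctly identify this as the principal difficulty and do not resolve it, so the argument has a genuine gap coinciding with the open problem itself. Two secondary issues: $X_{D,S}$ is infinite-dimensional (one point per $i \in \mathbb{Z}$ per row), so the language of irreducible varieties and Zariski-dense open sets needs to be routed through finite truncations or twisted configurations; and the projected configuration must also be checked against the distinctness requirements of Definition \ref{defmesh} (four distinct points on each line, four distinct lines through each point), another countable family of genericity conditions. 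Neither is fatal, but neither supplies the missing existence step.
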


The fractals are quite useful for discovering low order $S$-maps in various dimensions.  Suppose for some $m$ that we want to express the point $P_{0,m+1}$ of a $Y$-mesh in terms of the $P_{i,j}$ with $1 \leq j \leq m$.  One approach is to look for some fractals $f_1,\ldots, f_k$ each containing $(0,m+1)$ such that
\begin{itemize}
\item each of $\overline{f}_1,\ldots, \overline{f}_k$ is spanned by some subset of the $P_{i,j}$ with $1 \leq j \leq m$
\item $P_{0,m+1} = \overline{f}_1 \cap \overline{f}_2 \cap \ldots \cap \overline{f}_k$
\end{itemize}
Conjecture \ref{conjfract} gives a generic sense of the dimension of the fractal subspaces, which helps determine when these conditions hold.  Once an appropriate construction is found in this manner, it can be verified geometrically.  We illustrate this technique in Section \ref{seczoo}.

\section{Zoo} \label{seczoo}
This section examines a zoo of $Y$-pins $S$ in order to illustrate the great variety of geometric constructions that ensue.
The examples covered are summarized in Table \ref{tabzoo}.

\begin{table}
\begin{tabular}{c|c|c|c}
Picture & $S$ & $D(S)$ & Name / Citation \\
\hline
\begin{pspicture}(0,0)(1.5,.7)
\psdots(0,0)(.5,0)(0,.5)(.5,.5)
\end{pspicture} & \{(0,0), (1,0), (0,1), (1,1)\} & 1 & lower pentagram \cite{GSTV} \\
\hline
\begin{pspicture}(0,0)(1.5,.7)
\psdots(0,0)(1,0)(0,.5)(.5,.5)
\end{pspicture} & \{(0,0), (2,0), (0,1), (1,1)\} & 2 & pentagram \cite{S1} \\
\hline
\begin{pspicture}(0,0)(1.5,.7)
\psdots(0,0)(1.5,0)(.5,.5)(1,.5)
\end{pspicture} & \{(0,0), (3,0), (1,1), (2,1)\} & 3 & higher pentagram \cite{GSTV} \\
\hline
\begin{pspicture}(0,0)(1.5,1.2)
\psdots(0,0)(.5,0)(.5,.5)(0,1)
\end{pspicture} & \{(0,0), (1,0), (1,1), (0,2)\} & 2 &  sideways pentagram \\
\hline
\begin{pspicture}(0,0)(1.5,1.2)
\psdots(0,0)(1,0)(.5,.5)(.5,1)
\end{pspicture} & \{(-1,0), (1,0), (0,1), (0,2)\} & 3 & short diagonal hyperplane \cite{KS1} \\
\hline
\begin{pspicture}(0,0)(1.5,1.2)
\psdots(.5,0)(1,.5)(0,1)(.5,1)
\end{pspicture} & \{(1,0), (2,1), (0,2), (1,2)\} & 3 & dented pentagram \cite{KS2} \\
\hline
\begin{pspicture}(0,0)(2,1.7)
\psdots(.5,0)(1,0)(1,.5)(1.5,1.5)
\end{pspicture} & \{(0,0), (1,0), (1,1), (2,3)\} & 2 &  gopher \\
\hline
\begin{pspicture}(0,0)(2,1.7)
\psdots(.5,0)(1,0)(.5,1)(.5,1.5)
\end{pspicture} & \{(0,0), (1,0), (0,2), (0,3)\} & 2 &  penguin \\
\hline
\begin{pspicture}(0,0)(1.5,1.7)
\psdots(0,0)(1,.5)(.5,1)(.5,1.5)
\end{pspicture} & \{(-1,0), (1,1), (0,2), (0,3)\} & 4 &  rabbit \\
\hline
\begin{pspicture}(0,0)(1.5,1.7)
\psdots(0,0)(1,0)(.5,.5)(1,1.5)
\end{pspicture} & \{(0,0), (2,0), (1,1), (2,3)\} & 5 &  giraffe \\
\hline
\begin{pspicture}(0,0)(1.5,2.2)
\psdots(1,2)(0,1)(.5,.5)(.5,0)
\end{pspicture} & \{(1,0), (1,1), (0,2), (2,4)\} & 5 &  kangaroo \\
\hline
\begin{pspicture}(0,0)(1.5,1.7)
\psdots(1.5,1.5)(0,1)(.5,.5)(.5,0)
\end{pspicture} & \{(1,0),(1,1),(0,2),(3,3)\} & 6 &  elephant \\
\hline
\end{tabular}
\caption{A zoo of $Y$-pins.}
\label{tabzoo}
\end{table}

\subsection{Lower pentagram} The simplest possible $Y$-pin is $$S=\{(0,0),(1,0),(0,1),(1,1)\}.$$  
The corresponding recurrence \eqref{eqmain} and quiver $Q_S$ were first studied by Gekhtman, Shapiro, Tabachnikov, and Vainshtein \cite{GSTV}.  
Note that $D(S)=1$, but we have only defined $Y$-meshes in dimensions two and higher.  
However, the lower pentagram map \cite{GSTV} provides a geometric interpretation on the projective line and motivates a definition of a one dimensional $Y$-mesh for general $S$, including those with $D(S) > 1$.  We pursue this matter in Section \ref{sec1d}.

\subsection{Pentagram and higher pentagram}
The cases $$S= \{(0,0), (2,0), (0,1), (1,1)\} \text{ and } S=\{(0,0), (3,0), (1,1), (2,1)\}$$ were discussed in Example \ref{exGSTV}.  The former gives rise to the standard pentagram map.  
The latter $S$ (previously expressed as $S = \{(0,0), (3,0), (0,1),(1,1)\}$ which is equivalent as per Remark \ref{rmkequiv}) corresponds to the higher pentagram map on corrugated polygons 
in three dimensions \cite{GSTV}.

\subsection{Sideways pentagram} Now let $$S=\{(0,0), (1,0), (1,1), (0,2)\}$$ and $D=2$.  The corresponding $S$-map $T_{2,S}$ is defined on the space $X_{2,S}$ of pairs $A, B$ of polygons satisfying
\begin{displaymath}
A_{i-1}, A_{i}, B_i
\end{displaymath}
collinear for all $i \in \mathbb{Z}$.  The map is $T_{2,S}(A,B) = (B,C)$ where
\begin{displaymath}
C_i = \meet{\join{A_i}{A_{i+1}}}{\join{B_{i-1}}{B_i}}.
\end{displaymath}
The map $T_{2,S}$ is related to the original pentagram map as follows.  Begin with any polygon and apply the pentagram map to obtain a sequence of polygons.  Let $A_i$ denote vertex 1 of the $i$th polygon in the sequence.  Let $B_i$ denote vertex 2 of the $i$th polygon of the sequence.  Then $(A,B) \in X_{2,S}$ and $T_{2,S}(A,B) = (B,C)$ where $C_i$ is vertex 3 of the $i$th polygon in the sequence.  The relationship between the two systems can be seen on the level of the $Y$-pins which are 90 degree rotations of each other.

\subsection{Short diagonal hyperplane and dented pentagram}
The cases $$S=\{(-1,0),(1,0),(0,1),(0,2)\} \text{ and } S=\{(1,0), (2,1), (0,2), (1,2)\}$$ were discussed in Section \ref{secex}.  
The corresponding systems can be thought of respectively as square roots of the short diagonal hyperplane map \cite{KS1} and the dented pentagram map \cite{KS2}, both in dimension three.

\subsection{Gopher}

Let $S=\{(0,0), (1,0), (1,1), (2,3)\}$.  In this case we have $p=1$ and $q=3$ in the notation of Section \ref{secex}. We also have $D(S)=2$. This dimension is smaller than $p+q$, thus the methods of Section \ref{secex} do not apply.

This system has the following elegant geometric description.  Let $A, B, C$ be polygons such that $B_{i+1}$ lies on $\langle A_i, A_{i+1} \rangle$ and $C_{i+1}$ lies on $\langle B_i, B_{i+1} \rangle$ for all $i$. Then the next 
layer of $D_i$'s is obtained as follows:
$$D_{i+2} = \langle A_i, A_{i+1} \rangle \cap \langle C_{i+1}, C_{i+2} \rangle.$$
An example of the gopher map is given in Figure \ref{figgopher}.

\begin{figure}
\psset{unit=.5cm}
\begin{pspicture}(5,-16)(18,-1)
\pnode(11.7,-3.86){A1}
\pnode(15.86,-7.56){A2}
\pnode(14.16,-12.48){A3}
\pnode(8.24,-11.68){A4}
\pnode(6.88,-6.82){A5}
\pnode(9.53,-5.19){B1}
\pnode(13.86,-5.78){B2}
\pnode(15.14,-9.65){B3}
\pnode(11.38,-12.1){B4}
\pnode(7.56,-9.25){B5}
\pnode(8.12,-8.09){C1}
\pnode(11.11,-5.41){C2}
\pnode(14.34,-7.22){C3}
\pnode(13.82,-10.51){C4}
\pnode(9.72,-10.87){C5}
\pnode(6.4,-5.09){D1}
\pnode(15.23,-1.69){D2}
\pnode(17.44,-8.97){D3}
\pnode(12.97,-15.91){D4}
\pnode(5.15,-11.26){D5}
\pspolygon[showpoints=true](A1)(A2)(A3)(A4)(A5)
\pspolygon[showpoints=true](B1)(B2)(B3)(B4)(B5)
\pspolygon[showpoints=true](C1)(C2)(C3)(C4)(C5)
\pspolygon[showpoints=true](D1)(D2)(D3)(D4)(D5)
%\psdots(A1)(A2)(A3)(A4)(A5)
%\psdots(B1)(B2)(B3)(B4)(B5)
%\psdots(C1)(C2)(C3)(C4)(C5)
%\psdots(D1)(D2)(D3)(D4)(D5)
\psline[linestyle=dashed](A1)(D2)
\psline[linestyle=dashed](A2)(D3)
\psline[linestyle=dashed](A3)(D4)
\psline[linestyle=dashed](A4)(D5)
\psline[linestyle=dashed](A5)(D1)
\psline[linestyle=dashed](C1)(D1)
\psline[linestyle=dashed](C2)(D2)
\psline[linestyle=dashed](C3)(D3)
\psline[linestyle=dashed](C4)(D4)
\psline[linestyle=dashed](C5)(D5)
\uput[l](A1){$A_1$}
\uput[u](A2){$A_2$}
\uput[ur](A3){$A_3$}
\uput[d](A4){$A_4$}
\uput[240](A5){$A_5$}
\uput[150](B1){$B_1$}
\uput[ur](B2){$B_2$}
\uput[r](B3){$B_3$}
\uput[d](B4){$B_4$}
\uput[dl](B5){$B_5$}
\uput[r](C1){$C_1$}
\uput[d](C2){$C_2$}
\uput[dl](C3){$C_3$}
\uput[ul](C4){$C_4$}
\uput[ur](C5){$C_5$}
\uput[ul](D1){$D_1$}
\uput[r](D2){$D_2$}
\uput[r](D3){$D_3$}
\uput[r](D4){$D_4$}
\uput[dl](D5){$D_5$}
\end{pspicture}
\caption{The gopher map $(A,B,C) \mapsto (B,C,D)$.}
\label{figgopher}
\end{figure}
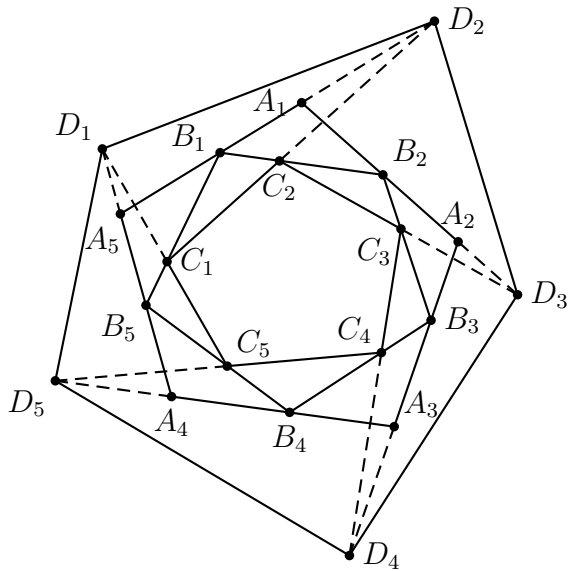

\subsection{Penguin}
Let $S = \{(0,0), (1,0), (0,2), (0,3)\}$.  In this case we have $p=2$ and $q=3$ in the notation of Section \ref{secex}. We also have $D(S)=2$. This dimension is smaller than $p+q$, thus the methods of Section \ref{secex} do not apply.

This system has the following elegant geometric description. Let $A, B, C$ be polygons such that $C_i$ lies on $\langle A_i, A_{i+1} \rangle$ for all $i$. Then the next 
layer of $D_i$'s is obtained as follows:
$$D_i = \langle A_i, A_{i+1} \rangle \cap \langle B_i, B_{i+1} \rangle.$$
An example of the penguin map is given in Figure \ref{figpenguin}.

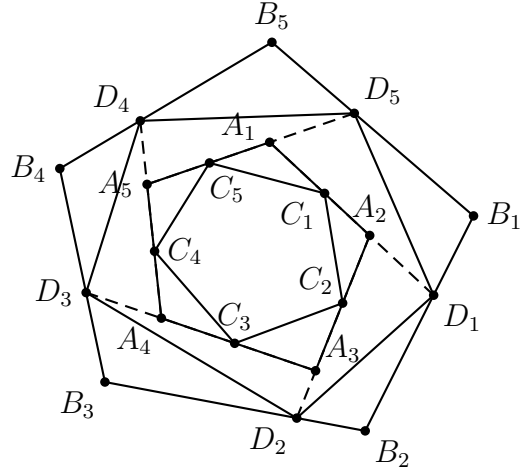
\begin{figure}
\psset{unit=.5cm}
\begin{pspicture}(7,-13)(19,-1)
\pnode(12.72,-4.7){A1}
\pnode(15.38,-7.18){A2}
\pnode(13.94,-10.78){A3}
\pnode(9.84,-9.38){A4}
\pnode(9.46,-5.82){A5}
\pnode(18.14,-6.66){B1}
\pnode(15.26,-12.38){B2}
\pnode(8.34,-11.08){B3}
\pnode(7.14,-5.4){B4}
\pnode(12.78,-2.04){B5}
\pnode(14.18,-6.06){C1}
\pnode(14.66,-8.98){C2}
\pnode(11.79,-10.05){C3}
\pnode(9.66,-7.6){C4}
\pnode(11.12,-5.25){C5}
\pnode(17.08,-8.77){D1}
\pnode(13.44,-12.04){D2}
\pnode(7.84,-8.7){D3}
\pnode(9.28,-4.13){D4}
\pnode(14.97,-3.93){D5}
\pspolygon[showpoints=true](A1)(A2)(A3)(A4)(A5)
\pspolygon[showpoints=true](B1)(B2)(B3)(B4)(B5)
\pspolygon[showpoints=true](C1)(C2)(C3)(C4)(C5)
\pspolygon[showpoints=true](D1)(D2)(D3)(D4)(D5)
%\psdots(A1)(A2)(A3)(A4)(A5)
%\psdots(B1)(B2)(B3)(B4)(B5)
%\psdots(C1)(C2)(C3)(C4)(C5)
%\psdots(D1)(D2)(D3)(D4)(D5)
\psline[linestyle=dashed](A1)(D1)
\psline[linestyle=dashed](A2)(D2)
\psline[linestyle=dashed](A3)(D3)
\psline[linestyle=dashed](A4)(D4)
\psline[linestyle=dashed](A5)(D5)
\uput[150](A1){$A_1$}
\uput[u](A2){$A_2$}
\uput[ur](A3){$A_3$}
\uput[dl](A4){$A_4$}
\uput[l](A5){$A_5$}
\uput[r](B1){$B_1$}
\uput[dr](B2){$B_2$}
\uput[dl](B3){$B_3$}
\uput[l](B4){$B_4$}
\uput[u](B5){$B_5$}
\uput[dl](C1){$C_1$}
\uput[ul](C2){$C_2$}
\uput[u](C3){$C_3$}
\uput[r](C4){$C_4$}
\uput[300](C5){$C_5$}
\uput[dr](D1){$D_1$}
\uput[dl](D2){$D_2$}
\uput[l](D3){$D_3$}
\uput[ul](D4){$D_4$}
\uput[ur](D5){$D_5$}
\end{pspicture}
\caption{The penguin map $(A,B,C) \mapsto (B,C,D)$.}
\label{figpenguin}
\end{figure}

\subsection{Rabbit}
A first example of a nonhorizontal $Y$-pin (one with $a_2<b_2<c_2<d_2$) is $$S = \{(-1,0), (1,1), (0,2), (0,3)\}.$$  
We considered the $D=2$ case in Examples \ref{exF2} and \ref{exF2cont} showing that the order of the $S$-map can be reduced from three to two using the techniques of Section \ref{secorder}.  The map in the plane inputs two generic polygons $A$ and $B$ and builds another $C$ via
\begin{equation} \label{eqrabbit}
C_i = \meet{\join{A_{i-1}}{B_{i+1}}}{\join{A_{i+1}}{B_i}}.
\end{equation}
The rabbit map in the plane is pictured in Figure \ref{figrabbit}.

\begin{figure}
\psset{unit=.5cm}
\begin{pspicture}(6,-12)(20,0)
\pnode(12.2,-3.18){A1}
\pnode(16.2,-5.92){A2}
\pnode(14.96,-10.2){A3}
\pnode(10,-10){A4}
\pnode(8.66,-5.76){A5}
\pnode(11.48,-.76){B1}
\pnode(18,-5){B2}
\pnode(17.02,-10.78){B3}
\pnode(9.3,-11.86){B4}
\pnode(6.72,-5.76){B5}
\pnode(15.54,-5.2){C1}
\pnode(15.77,-8.81){C2}
\pnode(11.29,-10.14){C3}
\pnode(8.78,-6.87){C4}
\pnode(11,-3.74){C5}
%\pspolygon[showpoints=true](A1)(A2)(A3)(A4)(A5)
%\pspolygon[showpoints=true](B1)(B2)(B3)(B4)(B5)
%\pspolygon[showpoints=true](C1)(C2)(C3)(C4)(C5)
\psdots(A1)(A2)(A3)(A4)(A5)
\psdots(B1)(B2)(B3)(B4)(B5)
\psdots(C1)(C2)(C3)(C4)(C5)
\psline[linestyle=dashed](A1)(B3)
\psline[linestyle=dashed](A2)(B4)
\psline[linestyle=dashed](A3)(B5)
\psline[linestyle=dashed](A4)(B1)
\psline[linestyle=dashed](A5)(B2)
\psline[linestyle=dashed](A1)(B5)
\psline[linestyle=dashed](A2)(B1)
\psline[linestyle=dashed](A3)(B2)
\psline[linestyle=dashed](A4)(B3)
\psline[linestyle=dashed](A5)(B4)
\uput[u](A1){$A_1$}
\uput[r](A2){$A_2$}
\uput[r](A3){$A_3$}
\uput[120](A4){$A_4$}
\uput[ur](A5){$A_5$}
\uput[u](B1){$B_1$}
\uput[r](B2){$B_2$}
\uput[dr](B3){$B_3$}
\uput[dl](B4){$B_4$}
\uput[l](B5){$B_5$}
\uput[ur](C1){$C_1$}
\uput[r](C2){$C_2$}
\uput[d](C3){$C_3$}
\uput[dl](C4){$C_4$}
\uput[ul](C5){$C_5$}

\end{pspicture}
\caption{The rabbit map $(A,B) \mapsto (B,C)$.}
\label{figrabbit}
\end{figure}
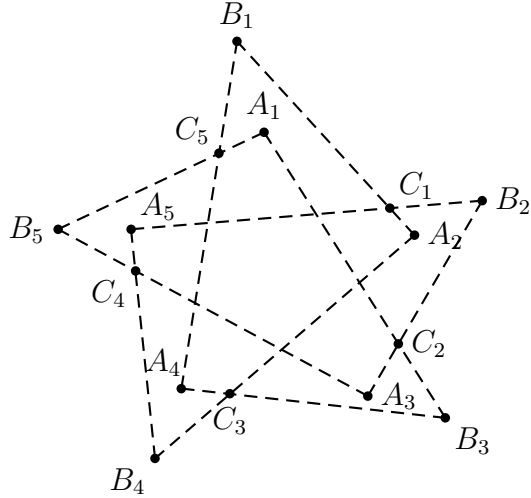

Now $D(S)=4$, and we argue that this order 2 map can be lifted to dimensions $D=3$ and $D=4$.  A natural definition would be to work with pairs $A$, $B$ of polygons with  $A_{i-1}, B_{i+1}, A_{i+1}, B_i$ coplanar so that $\eqref{eqrabbit}$ can still be applied.  However, this coplanarity condition will not by itself propagate (i.e. continue to hold for the pair $B,C$ of polygons).  It turns out that the pair of conditions  
$$A_{i-1}, B_{i+1}, A_{i+1}, B_i \text { are coplanar; } A_{i-1}, B_{i-1}, A_{i+2}, B_{i+1} \text { are coplanar}$$ does propagate.
Indeed, lines $\join{C_{i-1}}{B_{i-1}}$ and $\join{C_{i+1}}{B_{i+2}}$ intersect at $A_i$, according to the way we create $C_{i-1}$ and $C_{i+1}$. Thus, those 
four points are coplanar. On the other hand, line $\langle C_i, B_{i+1} \rangle$ coincides with line $\langle A_{i-1}, B_{i+1} \rangle$, while point $C_{i+1}$ lies on line $\langle B_{i+1},A_{i+2} \rangle$. 
This implies that both 
lines $\langle C_i, B_{i+1} \rangle$ and $\langle B_{i-1}, C_{i+1} \rangle$ lie in the plane $\langle A_{i-1}, B_{i-1}, A_{i+2}, B_{i+1} \rangle$ and thus intersect. Therefore, those four points are also coplanar. 

The mysterious extra condition that $A_{i-1}, B_{i-1}, A_{i+2}, B_{i+1} \text { are coplanar}$ is in fact what we would use to propagate the $Y$-mesh in the opposite time direction using \eqref{eqG2inv}.

\subsection{Giraffe}
Let $$S = \{(0,0), (2,0), (1,1), (2,3)\}.$$  Then $S$ is horizontal with $p=1$ and $q=3$ in the notation of Section \ref{secex}.  
By Proposition \ref{propzontal} the $S$-map in dimension $D=p+q=4$ is related to the $(I,J)$-map with $I=(2,2,2)$, $J = (-3,2,2)$.  
It is equivalent up to index shifts to take $J=(2,1,1)$.  More concretely, a $Y$-mesh $P$ in dimension 4 will satisfy
\begin{equation} \label{eqp1q3}
B_i = \meet{\join{A_{i-2}}{A_i}}{\langle A_{i-3}, A_{i-1}, A_{i+1}, A_{i+3} \rangle}
%P_{i,3} = \meet{\join{P_{i-2,0}}{P_{i,0}}}{\langle P_{i-3,0}, P_{i-1,0}, P_{i+1,0}, P_{i+3,0} \rangle}.
\end{equation}
where $A_i = P_{i,0}$ and $B_i = P_{i,3}$.  Now $D(S) = 5$ so the $S$-map can be defined in that dimension.  It is natural to try to define the corresponding $(I,J)$-map for $D=5$ as well.  Unlike in dimension 4 it is unexpected for a line and a three-space to meet.  To apply \eqref{eqp1q3} then we need the six points $A_{i-3}, A_{i-2}, A_{i-1}, A_i, A_{i+1}, A_{i+3}$  to lie in a 4-space.  
%The conclusion is that the six points on the right hand side of \eqref{eqp1q3} actually lie on a common four-space.  This fact is supported by the 4-fractal, pictured in Figure \ref{figfract}.  The fourth row from the bottom contains six points arranged in the same manner as those on the right hand side of \eqref{eqp1q3}.

\begin{figure}
\begin{pspicture}(6,6)
\psdots(0,0)(1,0)(2,0)(3,0)(4,0)(.5,.5)(1.5,.5)(2.5,.5)(3.5,.5)(1,1)(2,1)(3,1)(1,1.5)(1.5,1.5)(2,1.5)(2.5,1.5)(3,1.5)(4,1.5)(1.5,2)(2,2)(2.5,2)(3.5,2)(2,2.5)(3,2.5)(2,3)(2.5,3)(3,3)(4,3)(2.5,3.5)(3.5,3.5)(3,4)(3,4.5)(4,4.5)(3.5,5)(4,6)
\end{pspicture}  
\caption{The 4-fractal for giraffe $S = \{(0,0), (2,0), (1,1), (2,3)\}$.}
\label{figfract}
\end{figure}
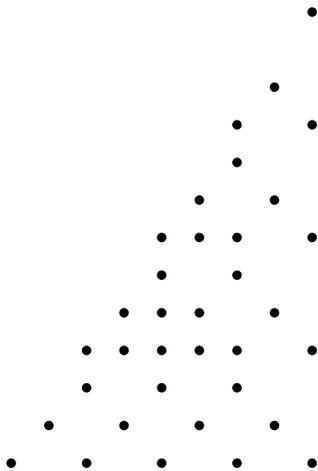

Amazingly, the condition 
$$A_{i-3}, A_{i-2}, A_{i-1}, A_i, A_{i+1}, A_{i+3} \text { lie on one $4$-space}$$ self-propagates under \eqref{eqp1q3}. Indeed, one can easily check that 
the $4$-space passing through $A_{i-5}, A_{i-3}, A_{i-1}, A_{i+1}, A_{i+3}$ passes through all six of $B_{i-3}$, $B_{i-2}$, $B_{i-1}$, $B_{i}$, $B_{i+1}$, $B_{i+3}$.
For example, $B_{i-3}$ lies on the line $\join{ A_{i-5}}{ A_{i-3}}$, while $B_{i-2}$ lies on the $3$-space $\langle A_{i-5}, A_{i-3}, A_{i-1}, A_{i+1} \rangle$. 

This sort of lift of the $(I,J)$-map can be performed anytime $D(S) > p+q$.  The simplest example is the lift of the higher diagonal map in the plane ($I=(k), J=(1)$ for $k \geq 3$) to the higher pentagram map on corrugated polygons \cite{GSTV}.  One way to see the needed extra condition is from the $(p+q)$-fractal.  For example, in the giraffe case the fourth row from the bottom of the $4$-fractal, pictured in Figure \ref{figfract}, indicates which six points of $A$ must lie on a $4$-space.

\subsection{Kangaroo}
Let $$S = \{(1,0), (1,1), (0,2), (2,4)\}.$$ Example \ref{exkangaroo} describes an $S$-map of order $3$ in the plane.  Now $D(S)=5$, and we show the order can be
reduced to $2$ for any $3 \leq D \leq 5$ as
follows.

Looking at the $2$-fractal (see Figure \ref{figkangaroo}), we see that
$$P_{i,k+2} = \langle P_{i+1,k+1},P_{i+1,k}\rangle \cap \langle
P_{i,k+1},P_{i-3,k+1},P_{i-2,k}\rangle \cap \langle
P_{i-2,k+1},P_{i-2,k},P_{i-1,k}\rangle.$$
Already in $3$ dimensions, and even more so in $4$ or $5$, such
triples of plane-plane-line do not have to intersect in general. Thus,
it is natural to ask if
this intersection property propagates.

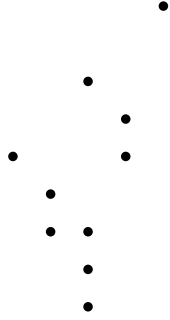
\begin{figure}
\begin{pspicture}(2,4)
\psdots(1,0)(1,.5)(.5,1)(1,1)(.5,1.5)(0,2)(1.5,2)(1.5,2.5)(1,3)(2,4)
\end{pspicture}  
\caption{The 2-fractal for kangaroo $S = \{(1,0), (1,1), (0,2), (2,4)\}$.}
\label{figkangaroo}
\end{figure}

Indeed it does. One checks from the propagation rule that all three of
the planes  $\langle P_{i,k+1},P_{i-3,k+1},P_{i-2,k}\rangle$, $\langle
P_{i-2,k+1},P_{i-2,k},P_{i-1,k}\rangle$
and $\langle P_{i-2,k},P_{i+1,k+1},P_{i+1,k}\rangle$ have a line
$\langle P_{i-2,k},P_{i-1,k-1}\rangle$ in common. This means that the
line along which the first two of those planes intersect must
intersect the line $\langle P_{i+1,k+1},P_{i+1,k}\rangle$, which is exactly the desired condition one time iteration later. 

\subsection{Elephant}
Let $$S = \{(1,0),(1,1),(0,2),(3,3)\}.$$ Then $D(S)=6$, and we can reduce the order of our system to $1$ for $D=6$, even though this is not a horizontal $Y$-pin.  The 4-fractal for $S$ is given in Figure \ref{figelephant}.

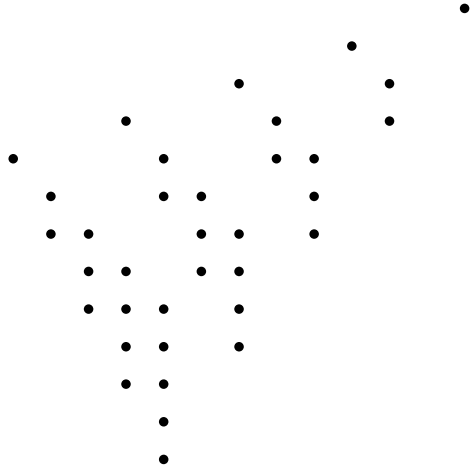
\begin{figure}
\begin{pspicture}(6,6)
\psdots
(2,0)
(2,0.5)
(1.5,1)(2,1)
(1.5,1.5)(2,1.5)(3,1.5)
(1,2)(1.5,2)(2,2)(3,2)
(1,2.5)(1.5,2.5)(2.5,2.5)(3,2.5)
(.5,3)(1,3)(2.5,3)(3,3)(4,3)
(.5,3.5)(2,3.5)(2.5,3.5)(4,3.5)
(0,4)(2,4)(3.5,4)(4,4)
(1.5,4.5)(3.5,4.5)(5,4.5)
(3,5)(5,5)
(4.5,5.5)
(6,6)
\end{pspicture}  
\caption{The 4-fractal for elephant $S = \{(1,0),(1,1),(0,2),(3,3)\}$.}
\label{figelephant}
\end{figure}

We see that one of the rows in the middle has $5$ points in it, which means that those points determine the $4$-space which is the affine span of the fractal in a $Y$-mesh. Looking 
at the next row up, we see that each point in $\mathbb Z^2$ is an intersection of four $4$-spaces determined by the points of the previous row. In $6$ dimensions this is certainly enough to determine a point.
Thus, we arrive to the following order $1$ description of the $Y$-mesh.
$$B_{i+8} = \langle A_{i+1},A_{i+2},A_{i+5},A_{i+6},A_{i+8} \rangle \cap \langle A_{i+4},A_{i+5},A_{i+8},A_{i+9},A_{i+11} \rangle$$ $$\cap \langle A_{i+5},A_{i+6},A_{i+9},A_{i+10},A_{i+12} \rangle \cap \langle A_{i+8},A_{i+9},A_{i+12},A_{i+13},A_{i+15} \rangle.$$
Note that in a $6$-space four $4$-spaces do not necessarily have a common point of intersection. Thus, this assumption needs to be made about the initial data, and a natural question arises: does this condition propagate?
We present the following argument for the fact that it does. Note that as usually, at several points in this argument we need to make additional genericity assumptions.

First we claim that lines $\join{A_{i+9}}{B_{i+9}}$ and $\join{A_{i+6}}{B_{i+5}}$ intersect. Indeed, they both lie in $4$-spaces $\langle A_{i+2},A_{i+3},A_{i+6},A_{i+7},A_{i+9} \rangle$ and 
$\langle A_{i+5},A_{i+6},A_{i+9},A_{i+10},A_{i+12} \rangle$. In a $6$-space those two $4$-spaces generically intersect in a plane, and any two lines in the same plane intersect. 
Denote $C'_{i+8}$ this point of intersection. We shall argue that $C'_{i+8}=C_{i+8}$, in other words that $C'_{i+8}$ is the common point of the four $4$-spaces that are supposed to define $C_{i+8}$.

In a similar fashion one can check that lines $\langle A_{i+5} B_{i+8} \rangle$ and $\langle A_{i+6} B_{i+5} \rangle$ intersect. Now, applying those two auxiliary claims several times, we see that affine spans 
$\langle A_{i+5},A_{i+8},B_{i+4},B_{i+8} \rangle$, $\langle A_{i+5},A_{i+6},B_{i+5},B_{i+8} \rangle$, $\langle A_{i+8},A_{i+9},B_{i+8},B_{i+11} \rangle$, and $\langle A_{i+6},A_{i+9},B_{i+5},B_{i+9} \rangle$
are planes. The second and third intersect the first among them along lines, so the union of those three belongs to a $4$-space. The last plane is contained in this $4$-space because points 
$B_{i+5}, A_{i+6}, A_{i+9}$ are. Therefore, all nine points involved, the four $A$'s and the five $B$'s, have affine span of dimension $4$. 

This means however that $A_{i+9}$ lies on $\langle B_{i+8},B_{i+9},B_{i+12},B_{i+13},B_{i+15} \rangle$, then so does $C'_{i+8}$. Similarly, $A_{i+6}$ and $A_{i+9}$ lie on 
$\langle B_{i+4},B_{i+5},B_{i+8},B_{i+9},B_{i+11} \rangle$ and also on $\langle B_{i+5},B_{i+6},B_{i+9},B_{i+10},B_{i+12} \rangle$, and thus so does $C'_{i+8}$. Finally, 
$A_{i+6}$ lies on $\langle B_{i+1},B_{i+2},B_{i+5},B_{i+6},B_{i+8} \rangle$, and thus so does $C'_{i+8}$. The argument is complete. 

\section{Periodic two dimensional quivers} \label{secquiver}
In this section, we review background on cluster algebras and then present a generalization of work of Fordy and Marsh \cite{FM}.

Cluster algebras, introduced by S. Fomin and A. Zelevinsky \cite{FZ1} are commutative rings endowed with certain combinatorial structure.  We will restrict ourselves to the case when this extra
structure can be encoded by a quiver. A \emph{quiver} $Q$ is a directed graph without loops or oriented two-cycles. 

Let $v$ be a vertex in a quiver $Q$. The quiver mutation $\mu_v$ transforms $Q$ into the new quiver $\mu_v(Q)$ defined as follows:
\begin{itemize}
 \item For each pair of directed edges $u \rightarrow v$ and $v \rightarrow w$, add a new edge $u \rightarrow w$. 
 \item Reverse directions of all edges incident to $v$.
 \item Remove all oriented two-cycles.
\end{itemize}

A \emph{seed} is a pair $(Q,\x)$ where $\x= (x_v)_{v \in V}$ is indexed by the vertex set $V$ of $Q$.  Given a vertex $v$, the \emph{seed mutation} $\mu_v$ transforms $(Q, \x)$ into the seed
$(Q', \x') = \mu_v(Q, \x)$ where $Q'= \mu_v(Q)$ as defined above, $x_u'=x_u$ whenever $u \neq v$, and
\begin{equation} \label{eqmux}
%x_u' = \begin{cases}
x_vx_v' =
\displaystyle\prod_{v \rightarrow w} x_w + \displaystyle\prod_{w \rightarrow v} x_w%, & u = v \\
%x_u, & \textrm{ otherwise}
%\end{cases}
\end{equation}

There is an auxiliary notion of $Y$-seeds $(Q,\y)$ with $\y=(y_v)_{v\in V}$ and an associated mutation rule \cite{FZ2}.  Specifically, given a vertex $v$, $\mu_v$ transforms $(Q,\y)$ to the $Y$-seed $(Q',\y')=\mu_v(Q,\y)$ where $Q'=\mu_v(Q)$ and
\begin{equation} \label{eqmuy}
y_u' = \begin{cases}
(y_v)^{-1} & u = v \\
y_u\left(\displaystyle\prod_{u \rightarrow v} (1+y_v)\right)\left(\displaystyle\prod_{v \rightarrow u} (1+y_v^{-1})\right)^{-1} , & \textrm{ otherwise}
\end{cases}
\end{equation}
Figure \ref{figmutate} gives an example of seed and $Y$-seed mutations.

\begin{figure}
\begin{pspicture}(0,1.5)(13,6)
\SpecialCoor
\rput(0,3){
% The vertices
\cnode(1,1){.25}{v10}
\rput(v10){1}
\cnode(2.5,1){.25}{v20}
\rput(v20){2}
\cnode(4,1){.25}{v30}
\rput(v30){3}
\cnode(1,2.5){.25}{v40}
\rput(v40){4}
\cnode(2.5,2.5){.25}{v50}
\rput(v50){5}
\cnode(4,2.5){.25}{v60}
\rput(v60){6}
% The edges
\psset{arrowsize=5pt}
\psset{arrowinset=0}
\ncline{->}{v10}{v20}
\ncline{->}{v30}{v20}
\ncline{->}{v40}{v10}
\ncline{->}{v20}{v50}
\ncline{->}{v60}{v30}
\ncline{->}{v50}{v40}
\ncline{->}{v50}{v60}
}
\rput(7,3){
% The vertices
\cnode(1,1){.25}{v11}
\rput(v11){1}
\cnode(2.5,1){.25}{v21}
\rput(v21){2}
\cnode(4,1){.25}{v31}
\rput(v31){3}
\cnode(1,2.5){.25}{v41}
\rput(v41){4}
\cnode(2.5,2.5){.25}{v51}
\rput(v51){5}
\cnode(4,2.5){.25}{v61}
\rput(v61){6}
% The edges
\psset{arrowsize=5pt}
\psset{arrowinset=0}
\ncline{->}{v11}{v51}
\ncline{->}{v31}{v51}
\ncline{->}{v21}{v11}
\ncline{->}{v21}{v31}
\ncline{->}{v41}{v11}
\ncline{->}{v51}{v21}
\ncline{->}{v61}{v31}
\ncline{->}{v51}{v41}
\ncline{->}{v51}{v61}
}
\psset{arrowsize=1.5pt 2}
\psline{->}(5,4.75)(7,4.75)
\uput[u](6,4.75){$\mu_2$}
\rput(2.5,3){$(x_1,x_2,x_3,x_4,x_5,x_6)$}
\psline{->}(4.5,3)(7,3) 
\rput(9.5,3){$(x_1,\frac{x_1x_3+x_5}{x_2},x_3,x_4,x_5,x_6)$}
\rput(2.5,2){$(y_1,y_2,y_3,y_4,y_5,y_6)$}
\psline{->}(4.5,2)(6,2) 
\rput(10,2){$\left(y_1(1+y_2),\frac{1}{y_2},y_3(1+y_2),y_4,\frac{y_5}{1+y_2^{-1}},y_6\right)$}
\end{pspicture}
\caption{An example of a quiver mutation, along with the corresponding birational maps of the $x$ and $y$ variables.}
\label{figmutate}
\end{figure}
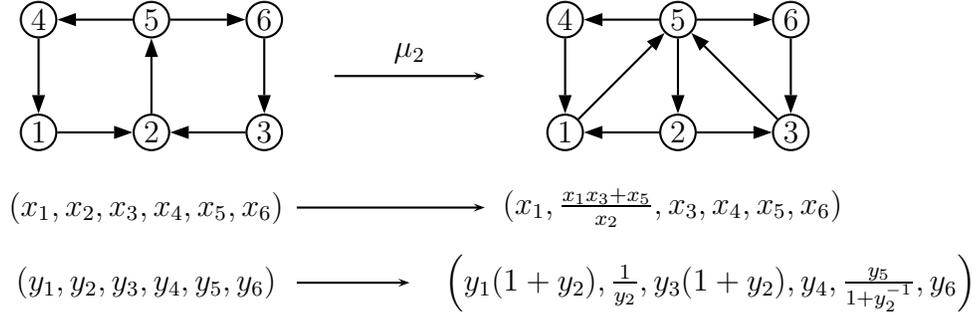

\begin{defin}[Fordy and Marsh \cite{FM}]
Say a quiver $Q$ on vertex set $V=\{1,2,\ldots, m\}$ has \emph{period one} if $\mu_1(Q) = \rho(Q)$ where $\rho$ denotes the procedure of relabeling the vertices according to $\rho(j) = j+1$ for $1 \leq j < m$ and $\rho(m)=1$.
\end{defin}
Begin with an initial seed $(\x,Q)$ with $\x = (x_j)_{j=1,\ldots, m}$ and $Q$ period one.  Perform mutations in the following order: $\mu_1, \mu_2,\ldots, \mu_m,\mu_1,\mu_2,\ldots$.  Call the single new variable produced by the $k$th mutation in this sequence $x_{m+k}$.  Then the periodicity of the quiver ensures 
\begin{displaymath}
x_{j+m}x_j = \displaystyle\prod_{1 \rightarrow k+1} x_{j+k} + \prod_{k+1 \rightarrow 1} x_{j+k}
\end{displaymath}
for all $j \in \mathbb{Z}$ where the products are over arrows in the original quiver $Q$.

It is more difficult to describe the $Y$-dynamics of a period one quiver, because (potentially) each variable changes at each step.  Let $Q$ be period one as before, but now take a $Y$-seed $(\y,Q)$ where
\begin{displaymath}
\y = (y_1^{(m)}, y_2^{(m-1)}, \ldots, y_m^{(1)}).
\end{displaymath}
The superscripts indicate how many steps have passed since the corresponding vertex of the quiver was mutated.  The subscripts behave as for the $x$-variables: when a vertex is mutated the corresponding variable has its subscript increased by $m$.  For example, the first mutation is
\begin{displaymath}
\mu_1(Q, (y_1^{(m)}, y_2^{(m-1)}, \ldots, y_m^{(1))}) = \{\rho(Q), (y_{m+1}^{(1)}, y_2^{(m)}, y_3^{(m-1)}, \ldots, y_m^{(2)})
\end{displaymath}
where $y_{m+1}^{(1)} = 1/y_1^{(m)}$ and
\begin{displaymath}
y_j^{(k+1)} = y_j^{(k)}\left(\displaystyle\prod_{j \longrightarrow 1} (1+y_1^{(m)})\right)\left(\displaystyle\prod_{1 \longrightarrow j} (1+(y_1^{(m)})^{-1})\right)^{-1}
\end{displaymath}
for all $j=2,\ldots, m$ and $k=m-j+1$.  As before, the period one property ensures that variables continue to transform in this manner.  The result is a family $y_j^{(k)}$ with $j \in \mathbb{Z}$ and $k=1,\ldots, m$ subject to
\begin{equation}
y_{j+m}^{(1)} = (y_j^{(m)})^{-1}
\end{equation}
and
\begin{equation} \label{eqystep}
y_{j}^{(k+1)} = y_j^{(k)}\frac{\prod_{(m-k+1) \rightarrow 1} (1+y_{j-m+k}^{(m)})}{\prod_{1 \rightarrow (m-k+1)} (1+(y_{j-m+k}^{(m)})^{-1})}
\end{equation}
for $k=1,2,\ldots, m-1$ where the products are over arrows in the original quiver $Q$.

It is not hard to see from the above that all $y_j^{(k)}$ can be determined by the $y_j^{(m)}$, i.e. the variables at vertices that are about to be mutated.  As such we define $y_j = y_j^{(m)}$ and throw out the rest of the variables.  

\begin{prop}
Let $Q$ be a period one quiver and define $y_j$ for $j \in \mathbb{Z}$ as above.  Then
\begin{displaymath}
y_{j+m}y_j = \frac{\prod_{(k+1) \rightarrow 1}(1+y_{j+m-k})}{\prod_{1 \rightarrow (k+1)}(1+y_{j+m-k}^{-1})}
\end{displaymath}
\end{prop}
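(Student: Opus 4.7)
The plan is to derive the stated identity by iterating \eqref{eqystep} with initial index shifted by $m$ and telescoping, then performing a change of summation variable to match the form of the claim.

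First I would replace $j$ by $j+m$ in \eqref{eqystep} to obtain
\begin{displaymath}
\frac{y_{j+m}^{(k+1)}}{y_{j+m}^{(k)}} = \frac{\prod_{(m-k+1) \rightarrow 1}(1+y_{j+k})}{\prod_{1 \rightarrow (m-k+1)}(1+y_{j+k}^{-1})}
\end{displaymath}
for $k=1,\ldots,m-1$, using that $y_{j+m-m+k}^{(m)} = y_{j+k}^{(m)} = y_{j+k}$. The right-hand side is already expressed purely in terms of the "about to mutate" variables $y_i$, so no further simplification is needed at this stage.

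Next I would telescope: taking the product of the above ratios over $k = 1, \ldots, m-1$ gives
\begin{displaymath}
\frac{y_{j+m}^{(m)}}{y_{j+m}^{(1)}} = \prod_{k=1}^{m-1} \frac{\prod_{(m-k+1) \rightarrow 1}(1+y_{j+k})}{\prod_{1 \rightarrow (m-k+1)}(1+y_{j+k}^{-1})}.
\end{displaymath}
Substituting the boundary condition $y_{j+m}^{(1)} = (y_j^{(m)})^{-1} = y_j^{-1}$ and $y_{j+m}^{(m)} = y_{j+m}$ yields
\begin{displaymath}
y_{j+m}\,y_j = \prod_{k=1}^{m-1} \frac{\prod_{(m-k+1) \rightarrow 1}(1+y_{j+k})}{\prod_{1 \rightarrow (m-k+1)}(1+y_{j+k}^{-1})}.
\end{displaymath}

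Finally, I would reindex by letting $k' = m-k$, so that $k+1$ becomes $m-k'+1$ replaced by $k'+1$ in the arrow specification, and $j+k$ becomes $j+m-k'$. As $k$ ranges over $1,\ldots,m-1$, so does $k'$, and the product rearranges into the claimed form. The step I expect to be most error-prone is verifying that the boundary case $y_{j+m}^{(1)} = y_j^{-1}$ combines correctly with the reindexing—there is no "$k=0$" or "$k=m$" factor to worry about precisely because the factor $1/y_j$ on the left side absorbs the inverted initial data and vertex $1$ is excluded from the product indices (only vertices $2,\ldots,m$, i.e.\ $k'+1$ with $k' \geq 1$, appear). Everything else is a routine bookkeeping exercise given that the formula \eqref{eqystep} already packages the $Y$-mutation \eqref{eqmuy} into the desired telescoping shape, with the period-one property of $Q$ ensuring the arrow-sets $\{(m-k+1)\to 1\}$ and $\{1 \to (m-k+1)\}$ remain those of the original quiver at every step.
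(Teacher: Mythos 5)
Your proposal is correct and follows essentially the same route as the paper: apply \eqref{eqystep} with $j$ shifted by $m$, telescope the product over $k=1,\ldots,m-1$, substitute $y_{j+m}^{(1)}=(y_j^{(m)})^{-1}=y_j^{-1}$, and reindex $k\mapsto m-k$ to arrive at the stated form. The bookkeeping you flag as potentially error-prone (the boundary factor and the reindexing) works out exactly as you describe, and matches the paper's computation.
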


\begin{proof}
Applying \eqref{eqystep} recursively yields
\begin{align*}
y_{j+m} = y_{j+m}^{(m)} &= y_{j+m}^{(1)}\prod_{k=1}^{m-1}\left(\frac{\prod_{(m-k+1) \rightarrow 1} (1+y_{j+k})}{\prod_{1 \rightarrow (m-k+1)} (1+y_{j+k}^{-1})}\right) \\
&= y_{j+m}^{(1)}\frac{\prod_{(k+1) \rightarrow 1} (1+y_{j+m-k})}{\prod_{1 \rightarrow (k+1)} (1+y_{j+m-k}^{-1})}.
\end{align*}
The result follows since $y_j = 1/y_{j+m}^{(1)}$.
\end{proof}

We now generalize the notion of period one quivers in order to model certain two dimensional recurrences.  For this purpose, it is convenient to allow infinite quivers.   We will require them however to be {\it {locally finite}}, meaning that each vertex is connected to only finitely many other vertices.  Under this assumption, mutations can be defined as before.  In fact, if $U$ is an infinite set of vertices no two of which are connected by an arrow, then we can define a simultaneous mutation $\mu_U$ at the vertices of $U$ thanks to the following property.

\begin{prop}
 Assume vertices $u$ and $v$ in $Q$ are not connected by an edge. Then the seed mutations (or $Y$-seed mutations) $\mu_u$ and $\mu_v$ commute. 
\end{prop}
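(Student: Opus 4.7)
The plan is to verify that $\mu_u$ and $\mu_v$ operate on disjoint pieces of the seed: the quiver-modification performed by $\mu_u$ leaves every arrow incident to $v$ unchanged, and the variable-update of $\mu_u$ leaves $x_v$ (respectively $y_v$) and every variable attached to a neighbor of $v$ unchanged, and symmetrically for $\mu_v$. Once this is in hand, commutativity is essentially automatic.

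For the quiver, I would go through the three steps of $\mu_u$ in turn. The reversal step flips only arrows incident to $u$ and so misses every arrow at $v$. The composition step creates an arrow $a \to b$ for each length-two path $a \to u \to b$, which can have $a = v$ or $b = v$ only if an arrow between $u$ and $v$ already exists, contradicting the hypothesis. Two-cycle cancellation acts locally on the multiset of arrows between fixed pairs of vertices and thus introduces no new arrow at $v$ either. From this, $\mu_v \mu_u(Q) = \mu_u \mu_v(Q)$ follows: the two reversals are carried out on disjoint arrow sets, and the two collections of composition arrows are computed, in either order, from the same original neighborhoods of $u$ and $v$, so the final multiset of arrows (and hence the result after two-cycle cancellation) is symmetric in the order of the mutations.

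For the $x$-seed statement, equation \eqref{eqmux} shows that $\mu_u$ changes only $x_u$ and uses only the variables at vertices adjacent to $u$; since $v$ and every neighbor of $v$ lie outside this set by hypothesis, both $x_v$ and all the data required for the subsequent $\mu_v$-mutation at $v$ are preserved. Hence the two updates commute. For the $Y$-seed statement, the $\mu_u$-correction to $y_v$ coming from \eqref{eqmuy} is $(1+y_u)^{a}(1+y_u^{-1})^{-b}$, where $a$ counts arrows $v \to u$ and $b$ counts arrows $u \to v$; both vanish by hypothesis, so $y_v$ is unchanged. For a third vertex $w$, the $\mu_u$- and $\mu_v$-corrections to $y_w$ are multiplicative factors depending respectively only on $y_u$ (and on arrows between $u$ and $w$) and only on $y_v$ (and on arrows between $v$ and $w$); since $y_u$ and $y_v$ are each preserved by the opposite mutation, and since (as shown above) the arrows at $u$ and at $v$ in the quiver are also preserved, these two multiplicative corrections commute as scalar operations on $y_w$.

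The only real labor in carrying this out is the bookkeeping of multiplicative factors applied to common neighbors of $u$ and $v$; there is no substantive obstacle, since everything is confined to local neighborhoods that the hypothesis forces to interact trivially with one another.
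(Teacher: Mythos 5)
Your argument is correct. For the record, the paper offers no proof of this proposition at all: it is stated as a known, standard fact about quiver mutations (it is needed only to justify the simultaneous mutation $\mu_U$ at a set of pairwise non-adjacent vertices), so there is no "paper's approach" to compare against. Your verification supplies exactly the content the authors leave implicit: the hypothesis $b_{uv}=0$ forces the composition step of $\mu_u$ to create no arrow incident to $v$ and the reversal step to touch no arrow incident to $v$, so the local data each mutation consumes is invariant under the other, and the updates to the variables are either disjoint (the $x$-case, where only $x_u$ and $x_v$ change) or commuting multiplicative corrections (the $Y$-case). The one spot where your write-up waves its hands slightly is the interaction of the two-cycle cancellations in $\mu_v\mu_u$ versus $\mu_u\mu_v$; your remark that the final multiset of arrows between any pair $w,w'$ is a symmetric signed count is right, and the cleanest way to make it airtight is to phrase the quiver mutation on the skew-symmetric exchange matrix, $b'_{xy} = b_{xy} + \tfrac{1}{2}\bigl(|b_{xu}|\,b_{uy} + b_{xu}\,|b_{uy}|\bigr)$ for $x,y\neq u$, where commutativity under $b_{uv}=0$ is a one-line check. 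As written, your proof is complete enough to stand in for the omitted one.
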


Now fix vertex set $V = \mathbb{Z} \times \{0,\ldots, l-1\}$.  Let $Q$ be a quiver on this set and let $\mu_{*,j}$ denote mutation at the set $\{(i,j) : i \in \mathbb{Z}\}$ when defined.  Say that $Q$ is \emph{period one} if 
\begin{itemize}
\item $Q$ is invariant under the translation $(i,j) \mapsto (i+1,j)$ of its vertices,
\item no two vertices $\{(i,0)\}$ are connected by an arrow, and
\item $\mu_{*,0}(Q) = \rho(Q)$ where $\rho$ denotes the permutation of the vertices 
\begin{displaymath}
\rho(i,j) = \begin{cases}
(i,j+1), & 0 \leq j < l-1\\
(i+i_0,1), & j=l-1
\end{cases}
\end{displaymath}
for some fixed shift $i_0 \in \mathbb{Z}$.  
\end{itemize}

Similarly to the one dimensional case, it is natural to consider the mutation sequence $\mu_{*,0}, \mu_{*,1}, \ldots, \mu_{*,l-1}, \mu_{*,0}, \ldots$.  Given an initial seed $(Q,\x)$ we can define $x_{i+i_0,j+l}$ to be the variable that $x_{i,j}$ transforms to during the $j$th mutation in this sequence.  Alternately, for an initial $Y$-seed $(Q, \y)$ with
\begin{displaymath}
\y = \{y_{i,j}^{(l-j)} : i \in \mathbb{Z}, j=0,\ldots, l-1\}
\end{displaymath}
say at each step that $y_{i,j}^{(k-1)}$ transforms to $y_{i,j}^{(k)}$ for $1 < k \leq l$ and $y_{i,j}^{(l)}$ transforms to $y_{i+i_0,j+l}^{(1)}$.  Let $y_{i,j} = y_{i,j}^{(l)}$.  Arguing exactly as before, the $x_{i,j}$ and $y_{i,j}$ satisfy recurrences as follows.

\begin{prop} \label{propexchange}
\begin{align}
\label{eqexchangex} 
x_{u+(i_0,l)}x_{u} &= \prod_{(0,0) \rightarrow v}x_{u+v} + \prod_{v \rightarrow (0,0)}x_{u+v} \\
\label{eqexchangey}
y_{u+(i_0,l)}y_u &= \frac{\prod_{v \rightarrow (0,0)}(1+y_{u+(i_0,l)-v})}{\prod_{(0,0) \rightarrow v}(1+y_{u+(i_0,l)-v}^{-1})}
\end{align}
\end{prop}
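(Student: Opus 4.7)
The plan is to run the natural two-dimensional analogue of the Fordy--Marsh argument recalled earlier in the section, with the period-one identity $\mu_{*,0}(Q)=\rho(Q)$ and the translation invariance of $Q$ as the structural inputs. First I would show by induction on $t$ that after $t$ steps of the infinite mutation sequence the current quiver equals $\rho^t(Q)$; the inductive step uses the identity $\mu_{*,j}(\rho(Q'))=\rho(\mu_{*,j-1}(Q'))$ for any quiver $Q'$ on $V$, which holds because $\rho$ is a vertex relabeling sending row $j-1$ to row $j$. Since $\rho^l$ is pure column translation by $(i_0,0)$ and $Q$ is column-translation invariant, $Q_l=Q$, so the quiver evolution itself has period $l$, matching the periodicity of the mutation pattern.

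Next I would track, at each time $t=cl+j_0$ with $0\le j_0<l$, which variable sits at each vertex of $V$. Iterating the update convention ``$x_{i,j}\mapsto x_{i+i_0,j+l}$ whenever its vertex is mutated'' gives that the variable at $(i,k)\in V$ is $x_{i+ci_0,k+cl}$ for $k\ge j_0$ and $x_{i+(c+1)i_0,k+(c+1)l}$ for $k<j_0$. In particular, the vertex mutated at step $t+1$ is $(i,j_0)$, currently carrying $x_u$ with $u=(i+ci_0,j_0+cl)$ and passing to $x_{u+(i_0,l)}$, which supplies the left-hand side of \eqref{eqexchangex}.

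For the right-hand side, arrows $(i,j_0)\to v$ in the current quiver $\rho^{j_0}(Q)$ biject with arrows $(0,0)\to w$ in $Q$ via $v=\rho^{j_0}((i,0)+w)$, using translation invariance and the defining action of $\rho$. I would then check that the current variable at $v$ equals $x_{u+w}$: this is immediate in the non-wrap case $w_2+j_0<l$ from the tracking above, and in the wrap-around case $w_2+j_0\ge l$ one verifies that the column shift $\pm i_0$ picked up by $\rho$ in crossing from row $l-1$ back to row $0$ cancels exactly against the $i_0$-shift baked into the index convention for already-advanced variables. Once this dictionary is in place, \eqref{eqexchangex} is \eqref{eqmux} read at vertex $(i,j_0)$. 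The identity \eqref{eqexchangey} is proved by the same template applied to \eqref{eqmuy}; the subscript $u+(i_0,l)-v$ rather than $u+v$ arises because the $Y$-mutation rule expresses the new variables in terms of the inverse of the old one at the mutated vertex. The main obstacle is precisely the wrap-around bookkeeping just mentioned, where signs and magnitudes of column shifts must be matched against those hidden in the index convention; this becomes transparent on the universal cover of $Q$ (cf.\ Section~\ref{seclift}), on which $Q$ pulls back to a fully translation-equivariant quiver on $\mathbb{Z}^2$ and one can simply read \eqref{eqmux} and \eqref{eqmuy} off at an arbitrary vertex with no modular arithmetic.
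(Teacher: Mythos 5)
Your argument is correct and is essentially the paper's: the paper offers no written proof of Proposition \ref{propexchange} beyond the phrase ``arguing exactly as before,'' and what you write out --- the induction giving $Q_t=\rho^t(Q)$, the variable tracking, the arrow dictionary at the mutated row, and the telescoping for the $y$-variables --- is precisely the intended two-dimensional rerun of the Fordy--Marsh computation. The only delicate point is the one you flag yourself: the sign of the $i_0$-shift in $\rho$ must be taken opposite to the one in the reindexing convention $x_{i,j}\mapsto x_{i+i_0,j+l}$ for the wrap-around cancellation to occur (the paper's displayed formula for $\rho$ contains a typo here in any case), but this is a matter of fixing conventions rather than a gap in your argument.
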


In order to match \eqref{eqmain} and \eqref{eqexchangey}, we will need a period one quiver with prescribed arrows in and out of $(0,0)$.  The following Proposition guarantees the existence of such a quiver, under certain conditions on these arrows which is satisfied in our case.  

\begin{prop} \label{propQexists}
Let $m_v$ be a collection of integers indexed by $v \in \mathbb{Z}^2$ with $1 \leq v_2 \leq l-1$ such that all but finitely many $m_v$ are zero and $m_v = m_{(i_0,l)-v}$ for all $v$.  Then there exists a period one quiver $Q$ such that for all $v$, there are $|m_v|$ arrows from $(0,0)$ to $v$ (resp. from $v$ to $(0,0))$ if $m_v \geq 0$ (resp. $m_v \leq 0$).
\end{prop}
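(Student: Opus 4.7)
The plan is to work at the level of the exchange matrix of $Q$. Horizontal translation invariance means the quiver is encoded by functions $f_{j,k}:\mathbb{Z}\to\mathbb{Z}$ for $j,k\in\{0,\ldots,l-1\}$, with $f_{j,k}(b-a)$ recording the signed arrow count from $(a,j)$ to $(b,k)$. The row-$0$ hypotheses and skew-symmetry determine the ``boundary'' data
\[
f_{0,k}(n) = m_{(n,k)} \ (k\geq 1),\qquad f_{0,0}\equiv 0,\qquad f_{k,0}(n) = -m_{(-n,k)}.
\]
I will define the interior values $f_{j,k}$ with $j,k\geq 1$ inductively on $\min(j,k)$ by the recursion forced by the period-one condition applied to pairs of vertices both outside row $0$:
\[
f_{j,k}(n) = f_{j-1,k-1}(n) - \mathrm{Corr}(j,k,n),
\]
where
\[
\mathrm{Corr}(j,k,n) = \sum_{c\in\mathbb{Z}} \Bigl([f_{j,0}(c)]_+[f_{0,k}(n-c)]_+ - [-f_{j,0}(c)]_+[-f_{0,k}(n-c)]_+\Bigr)
\]
depends only on the already-specified boundary data. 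Finite support of the $m_v$ makes each correction a finite sum, so the $f_{j,k}$ are finitely supported and the resulting $Q$ is locally finite.

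Two consistency conditions then require verification. First, skew-symmetry $f_{j,k}(n)=-f_{k,j}(-n)$ will follow by induction on $\min(j,k)$ once one checks the direct identity $\mathrm{Corr}(j,k,n)=-\mathrm{Corr}(k,j,-n)$, obtained by a change of variables $c\mapsto -c$ in the defining sum. Second, the period-one relation must be verified in the instances where $\rho^{-1}$ carries at least one of the vertices onto row $l-1$; these are not visibly enforced by the recursion. Iterating the recursion back to the row-$0$ boundary will convert each such instance into the assertion that
\[
\Sigma_j(n) := \sum_{s=1}^{j} \mathrm{Corr}(l-1-j+s,\;s,\;n) = 0 \qquad (j\in\{1,\ldots,l-1\}),
\]
where the top case $j=l-1$ is the self-consistency condition $f_{l-1,l-1}\equiv 0$.

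Showing $\Sigma_j\equiv 0$ is the main obstacle, and it is exactly the step that requires the reflection hypothesis $m_v=m_{(i_0,l)-v}$. The plan is to make the substitution $c\mapsto c-i_0$ in the sum defining $\mathrm{Corr}(l-j,l-k,n)$ and invoke the reflection identity on the $m_v$; this should collapse that sum to $-\mathrm{Corr}(k,j,n)$. Combined with the skew identity for $\mathrm{Corr}$ above, this yields the key cancellation rule
\[
\mathrm{Corr}(l-s,\;j+1-s,\;n) = -\mathrm{Corr}(l-1-j+s,\;s,\;n).
\]
Pairing $s\leftrightarrow j+1-s$ in $\Sigma_j$ then cancels each matched pair, and the only possibly unmatched central term $s=(j+1)/2$ (occurring for $j$ odd) satisfies $\mathrm{Corr}(l-s,s,n)=-\mathrm{Corr}(l-s,s,n)$ and so vanishes. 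Hence $\Sigma_j\equiv 0$ for every $j$, and the construction produces a locally finite period-one quiver with the prescribed arrows incident to $(0,0)$.
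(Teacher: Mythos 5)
Your argument is correct in substance but takes a genuinely different route from the paper's. The paper constructs $Q$ explicitly in closed form: it copies the $|m_v|$ prescribed arrows to every row $r$ with $0 \leq r \leq l-1-v_2$, adds $\epsilon_{v,w}=\tfrac12\bigl(|m_w|m_v-m_w|m_v|\bigr)$ composite arrows from $(k,r)+v$ to $(k,r)+w$ whenever $\epsilon_{v,w}>0$ and $v_2+w_2\leq l-1$, cancels two-cycles, and then verifies $\mu_{*,0}(Q)=\rho(Q)$ by listing directly what the mutation does to each family of arrows. You instead solve for $Q$: periodicity forces the recursion $f_{j,k}=f_{j-1,k-1}-\mathrm{Corr}(j,k,\cdot)$, and existence reduces to the wrap-around consistency $\Sigma_j\equiv 0$. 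I checked the two steps you leave as ``should'' statements and both hold: $\mathrm{Corr}(k,j,-n)=-\mathrm{Corr}(j,k,n)$ follows from skew-symmetry of the boundary data together with the shift $c\mapsto c-n$ of the summation variable (the substitution $c\mapsto -c$ alone does not align the arguments), and the reflection hypothesis does yield $\mathrm{Corr}(l-j,l-k,n)=-\mathrm{Corr}(k,j,n)$, so the pairing $s\leftrightarrow j+1-s$ annihilates $\Sigma_j$, the self-paired middle term vanishing when $j$ is odd. One point you pass over silently: the reflection hypothesis is needed a second time, before the cancellation argument, merely to put the wrap-around relations in the form $\Sigma_j\equiv 0$ --- the boundary value $f_{l-k,0}(n)=-m_{(-n,\,l-k)}$ reached by iterating the recursion down to row $0$ agrees with the correspondingly shifted $m$-value demanded by the relabeling $\rho$ only because $m_v=m_{(i_0,l)-v}$. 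As for what each approach buys: yours yields uniqueness of $Q$ for free, since the recursion determines every entry of the exchange matrix from the prescribed data; the paper's yields the explicit list of arrow types of $Q_S$ (the compass directions of Table \ref{tabcompass}) that the embedding arguments of Section \ref{seclift} rely on.
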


\begin{ex}
Let $(i_0,l)=(0,3)$.  Let $m_{1,1}=m_{-1,2} = 1$, $m_{-1,1}=m_{1,2}=-1$, and $m_v=0$ for all other $v$.  Figure \ref{figquiver} provides the corresponding periodic quiver.  We will see that this quiver relates to the $S$-map for $S=\{(-1,1),(1,1),(0,2),(0,3)\}$ (see Example \ref{exKS}).
\end{ex}

\begin{proof}
 Let $\epsilon_{v,w} = \frac{1}{2}(|m_w|m_v-m_w|m_v|)$. Construct the quiver as follows:
 \begin{itemize}
  \item For any $0 \leq r \leq l-1-v_2$ and $k \in \mathbb Z$, add $|m_v|$ arrows from $(k,r)$ to $(k,r)+v$ (resp. from $(k,r)+v$ to $(k,r))$ if $m_v \geq 0$ (resp. $m_v \leq 0$).
  \item For any pair $v,w$ such that $\epsilon_{v,w} > 0$ and $v_2 + w_2 \leq l-1$, add $\epsilon_{v,w}$ arrows from $(k,r)+v$ to $(k,r)+w$ for all $0 \leq r \leq l-1-v_2-w_2$ and 
  $k \in \mathbb Z$.
  \item if some of the resulting arrows cancel out, cancel them out, so that the quiver does not have any $2$-cycles.
 \end{itemize}
We claim that the result is the desired period one quiver. It is clear it is symmetric under shift $(i,j) \mapsto (i+1,j)$. 
Perform the mutation $\mu_{*,0}$ and rename each vertex $(i,0)$ as $(i+i_0,l)$.  We want to show the result is a shift of the quiver by $(i,j) \mapsto (i,j+1)$.  Here is the complete list of changes to the quiver caused by this mutation. 
\begin{itemize}
 \item $|m_v|$ arrows between $(k,0)$ and $(k,0)+v$ become $|m_v|$ arrows between $(k+i_0,l)-v$ and $(k+i_0,l)$.
 \item For $v,w$ such that $\epsilon_{v,w} > 0$ and $v_2 + w_2 \leq l-1$ total of $\epsilon_{v,w}$ arrows between $(k,0)+w$ and $(k,0)+v$ are created, canceling the same number of arrows in 
 the opposite direction. 
 \item For $v,w$ such that $\epsilon_{v,w} > 0$ and $v_2 + w_2 \leq l-1$ total of $\epsilon_{v,w}$ arrows between $(k+i_0,l)-w$ and $(k+i_0,l)-v$ are created. 
\end{itemize}
It is easy to see that this essentially shifts the quiver described above by $(i,j) \mapsto (i,j+1)$. This completes the proof. 
\end{proof}

\section{The cluster description of $S$-maps} \label{seccluster}
This section contains the proof of Theorem \ref{thmcluster}.  The first part of the Theorem, namely the verification of \eqref{eqmain}, is a purely geometric result.

Fix points $P_1,P_2$ and lines $L_1,L_2$ in the plane.  By way of notation, let
\begin{displaymath}
[P_1,L_1,P_2,L_2] = [P_1, \meet{\join{P_1}{P_2}}{L_1}, P_2, \meet{\join{P_1}{P_2}}{L_2}].
\end{displaymath}
Using similar triangles, it is easy to show
\begin{displaymath}
[P_1,L_1,P_2,L_2] = \frac{d(P_1,L_1)}{d(P_2,L_1)}\frac{d(P_2,L_2)}{d(P_1,L_2)},
\end{displaymath}
where $d(P_i,L_j)$ denotes the signed perpendicular distance from $P_i$ to $L_j$.  The signs are chosen so that, for example
\begin{displaymath}
\frac{d(P_1,L_1)}{d(P_2,L_1)}
\end{displaymath}
is positive if and only if $P_1,P_2$ lie on the same side of $L_1$.

\begin{lem} \label{lemmain}
Let $P_1, P_2, P_3, P_4$ be points and $L_1, L_2, L_3, L_4$ lines, all in the projective plane.  For $1 \leq i < k \leq 4$, let
\begin{displaymath}
y_{i,k} = [P_i, L_j, P_k, L_l]
\end{displaymath}
where $\{j,l\} = \{1,2,3,4\} \setminus \{i,k\}$ are chosen so that $i,j,k,l$ is an even permutation.  Then
\begin{displaymath}
\prod_{1 \leq i < k \leq 4} y_{i,j} = 1
\end{displaymath}
\end{lem}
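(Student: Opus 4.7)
The plan is to expand each factor $y_{i,k}$ using the signed-distance formula
\[
[P_a, L_b, P_c, L_d] = \frac{d(P_a, L_b)}{d(P_c, L_b)} \cdot \frac{d(P_c, L_d)}{d(P_a, L_d)}
\]
recalled just before the lemma, and then to verify that in the resulting product each signed distance $d(P_m, L_n)$ with $m \neq n$ appears exactly once in the numerator and once in the denominator, so that everything cancels.

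Each factor becomes $y_{i,k} = \dfrac{d(P_i, L_j)\, d(P_k, L_l)}{d(P_k, L_j)\, d(P_i, L_l)}$. A given $d(P_m, L_n)$ with $m \neq n$ appears in $y_{i,k}$ precisely when $m \in \{i,k\}$ and $n \in \{j, l\}$, that is, precisely when the unordered pair $\{i,k\}$ contains $m$ but not $n$. There are exactly two such pairs: $\{m,r\}$ and $\{m,s\}$, where $\{r,s\} = \{1,2,3,4\} \setminus \{m,n\}$. So $d(P_m, L_n)$ occurs in exactly two of the six factors.

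It remains to check that these two occurrences carry opposite exponents. Inside $y_{i,k}$, $d(P_m, L_n)$ lies in the numerator iff $(m,n) \in \{(i,j),(k,l)\}$, equivalently iff the positions of $m$ and $n$ within the permutation $(i,j,k,l)$ form one of the adjacent pairs $\{1,2\}$ or $\{3,4\}$; otherwise it lies in the denominator. Now compare the pair $\{i,k\}=\{m,r\}$ with the other eligible pair $\{m,s\}$: the underlying permutation changes by the transposition $r \leftrightarrow s$, which flips its parity. To restore evenness of $(i,j,k,l)$, the relative position of $n$ inside $\{j,l\}$ (first slot vs.\ last slot) must also swap, which is exactly what toggles whether the positions of $m$ and $n$ form an adjacent pair. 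Hence the sign of $d(P_m, L_n)$ flips between the two appearances, and the distance cancels out of the product.

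The main obstacle is the parity bookkeeping in the last step, but it boils down to a single observation about transpositions and can be dispatched by a case analysis on the ordering of $m,r,s$; no deeper geometric input is required beyond the given signed-distance identity. Since every $d(P_m, L_n)$ cancels, the six-factor product equals $1$.
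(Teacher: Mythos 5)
Your proof is correct and follows essentially the same route as the paper: expand each $y_{i,k}$ via the signed-distance formula and observe that each of the twelve distances $d(P_m,L_n)$, $m \neq n$, occurs exactly once in a numerator and once in a denominator. The only difference is that the paper simply asserts this cancellation, whereas you supply the parity bookkeeping (which one could also verify by direct enumeration of the six factors) that justifies it.
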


\begin{proof}
By the above,
\begin{displaymath}
y_{i,k} = \frac{d(P_i,L_j)}{d(P_k,L_j)}\frac{d(P_k,L_l)}{d(P_i,L_l)}
\end{displaymath}
In $\prod_{1 \leq i < k \leq 4} y_{i,j}$ there are twelve distinct factors, namely the distances $d(P_i,L_j)$ for $i \neq j$.  Each factor appears twice, once in a numerator and once in a denominator, and hence cancels out.  
\end{proof}

When $D \geq 3$, a more general version of Lemma \ref{lemmain} will be needed.  Let $P_1,\ldots, P_4$ be points in any projective space and $L_1,\ldots, L_4$ lines so that each line $L_i$ lies in the plane spanned by the $P_j$ with $j \neq i$.  Then the $y_{i,k}$ are still defined.  The result of Lemma \ref{lemmain} continues to hold via an identical argument.

\begin{prop} \label{propmain}
The $y_r = y_r(P)$ (see \eqref{eqdefy}) satisfy
\begin{displaymath} %\label{eqmain}
y_{r+a+b}y_{r+c+d} = \frac{(1+y_{r+a+c})(1+y_{r+b+d})}{(1+y_{r+a+d}^{-1})(1+y_{r+b+c}^{-1})}
\end{displaymath}
\end{prop}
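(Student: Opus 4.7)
The plan is to apply the higher-dimensional form of Lemma \ref{lemmain} to a specific configuration associated to $r$.  Writing $x_1=a$, $x_2=b$, $x_3=c$, $x_4=d$, I would take the four points $Q_i = P_{r+(a+b+c+d)-x_i}$ and the four lines $\ell_i = L_{r+2x_i}$ for $i=1,2,3,4$.  The $Y$-mesh property directly gives $\langle Q_i,Q_k\rangle = L_{r+x_l+x_m}$ whenever $\{l,m\}=\{1,2,3,4\}\setminus\{i,k\}$, and identifies the four $Y$-mesh points on this pair-line as $Q_i$, $Q_k$, $P_{r+2x_l+x_m}$, $P_{r+x_l+2x_m}$.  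To verify the coplanarity hypothesis of the Lemma, observe that for each $i$ and each $j\neq i$ the point $P_{r+2x_i+x_j}$ lies both on $\ell_i$ and on the pair-line $L_{r+x_i+x_j}\subset\langle Q_{m'}:m'\neq i\rangle$; this exhibits three distinct points of $\ell_i$ in the plane $\langle Q_{m'}:m'\neq i\rangle$, so $\ell_i$ itself lies in that plane.

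Next I would check that the two ``extra'' $Y$-mesh points $P_{r+2x_l+x_m}$ and $P_{r+x_l+2x_m}$ on $\langle Q_i,Q_k\rangle$ coincide with the intersections $\ell_l\cap\langle Q_i,Q_k\rangle$ and $\ell_m\cap\langle Q_i,Q_k\rangle$ called for in the Lemma.  Thus each $y_{i,k}$ is a cross ratio of exactly the same four collinear points that enter the definition of $y_{r+x_l+x_m}$, but possibly reordered.  Applying the $S_3$-action on cross ratios, whose six values are $\lambda$, $1-\lambda$, $1/\lambda$, $(\lambda-1)/\lambda$, $\lambda/(\lambda-1)$, $1/(1-\lambda)$, careful bookkeeping produces
\[
y_{3,4}=-y_{r+a+b},\quad y_{1,2}=-y_{r+c+d},\quad y_{1,3}=\tfrac{1}{1+y_{r+b+d}},\quad y_{2,4}=\tfrac{1}{1+y_{r+a+c}},
\]
\[
y_{1,4}=1+y_{r+b+c}^{-1},\quad y_{2,3}=1+y_{r+a+d}^{-1}.
\]
Substituting these into the Lemma's conclusion $\prod_{i<k}y_{i,k}=1$ and simplifying---the two minus signs cancel---yields precisely the identity \eqref{eqmain}.

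The main obstacle is the cross-ratio bookkeeping in the previous step.  Because the definitions of $y_{r+\sigma}$ and of $y_{i,k}$ order the four collinear points on each pair-line differently, they are generally related not by a simple sign flip but by a nontrivial element of the cross-ratio group $S_3=S_4/V_4$; this $S_3$-twisting is precisely what produces the asymmetric $(1+y)$ and $(1+y^{-1})$ factors on the right-hand side of \eqref{eqmain}.  Once the six identifications are carried out correctly, the identity is immediate from $\prod y_{i,k}=1$.
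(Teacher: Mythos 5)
Your proposal is correct and follows essentially the same route as the paper: both apply Lemma \ref{lemmain} to the four points $P_{r+a+b+c}, P_{r+a+b+d}, P_{r+a+c+d}, P_{r+b+c+d}$ and the four lines $L_{r+2a}, L_{r+2b}, L_{r+2c}, L_{r+2d}$, and then use the standard cross-ratio symmetries to match the six $y_{i,k}$ with the factors of \eqref{eqmain} (your six stated identifications do check out). The only difference is one of direction --- the paper rewrites the identity as a product of six cross ratios and recognizes the lemma, whereas you compute the $y_{i,k}$ directly --- and you are slightly more explicit about verifying the coplanarity hypothesis when $D\geq 3$.
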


\begin{proof}
Everything is translation invariant, so it suffices to consider the $r=0$ case, namely
\begin{align*}
&[P_{2a+b}, P_{a+b+c}, P_{a+2b}, P_{a+b+d}][P_{a+c+d}, P_{2c+d}, P_{b+c+d}, P_{c+2d}] \\
&= \frac{(1-[P_{2a+c}, P_{a+2c}, P_{a+b+c}, P_{a+c+d}])(1-[P_{a+b+d}, P_{b+c+d}, P_{2b+d}, P_{b+2d}])} {(1-[P_{2a+d}, P_{a+c+d}, P_{a+b+d}, P_{a+2d}]^{-1})(1-[P_{a+b+c}, P_{b+2c}, P_{2b+c}, P_{b+c+d}]^{-1})}
\end{align*}
In general
\begin{displaymath}
1 - [x_1,x_2,x_3,x_4] = [x_1,x_3,x_2,x_4]
\end{displaymath}
and
\begin{displaymath}
[x_1,x_2,x_3,x_4]^{-1} = [x_1,x_4,x_3,x_2].
\end{displaymath}
Applying these repeatedly, the desired identity can be expressed in the form that a product of six cross ratios equals 1.  These cross ratios are
\begin{align*}
&[P_{2a+b}, P_{a+b+c}, P_{a+2b}, P_{a+b+d}] \\
&[P_{a+c+d}, P_{2c+d}, P_{b+c+d}, P_{c+2d}] \\
&[P_{2a+c}, P_{a+c+d}, P_{a+2c}, P_{a+b+c}] \\
&[P_{a+b+d}, P_{b+2d}, P_{b+c+d}, P_{2b+d}] \\
&[P_{2a+d}, P_{a+b+d}, P_{a+2d}, P_{a+c+d}] \\
&[P_{a+b+c}, P_{2b+c}, P_{b+c+d}, P_{b+2c}]
\end{align*}
Also $[x_1,x_2,x_3,x_4] = [x_2,x_1,x_4,x_3] = [x_3,x_4,x_1,x_2] = [x_4,x_3,x_2,x_1]$ so the six cross ratios can be rewritten as
\begin{align*}
&[P_{a+b+c}, P_{2a+b}, P_{a+b+d}, P_{a+2b}] \\
&[P_{a+c+d}, P_{2c+d}, P_{b+c+d}, P_{c+2d}] \\
&[P_{a+b+c}, P_{a+2c}, P_{a+c+d},  P_{2a+c}] \\
&[P_{a+b+d}, P_{b+2d}, P_{b+c+d}, P_{2b+d}] \\
&[P_{a+b+d}, P_{2a+d}, P_{a+c+d},  P_{a+2d}] \\
&[P_{a+b+c}, P_{2b+c}, P_{b+c+d}, P_{b+2c}]
\end{align*}

Let $L_r$ denote the common line containing $P_{r+a},P_{r+b},P_{r+c},P_{r+d}$.  Then the fact that these cross ratios have product 1 amounts to Lemma \ref{lemmain} applied to the points
\begin{displaymath}
P_{a+b+c}, P_{a+b+d}, P_{a+c+d}, P_{b+c+d}
\end{displaymath}
and the lines
\begin{displaymath}
L_{2d}, L_{2c}, L_{2b}, L_{2a}.
\end{displaymath}
\end{proof}

\begin{proof}[Proof of Theorem \ref{thmcluster}]
Let $S= \{a,b,c,d\}$ be a $Y$-pin.  We have just finished proving \eqref{eqmain}. 
We build the corresponding quiver $Q_S$ using the results of Section \ref{secquiver}.  Let $(i_0,l)=c+d-(a+b)$.  
Define $m_v$ for $v \in \mathbb{Z}^2$, $0 < v_2 < l$ as follows.  Let $m_{c-a}=m_{d-b}=1$, $m_{c-b}= m_{d-a} = -1$, and $m_v=0$ for all other $v$.  By Proposition \ref{propQexists}, there is a period one quiver $Q_S$ on vertex set $\mathbb{Z} \times \{0,1,\ldots, l-1\}$ such that $(0,0)$ has outgoing arrows to $c-a$ and $d-b$, and incoming arrows from $c-b$ and $d-a$.  By Proposition \ref{propexchange}, the periodic mutations of this quiver produce $y$-variables $y_{i,j}$ satisfying
\begin{displaymath}
y_{u+(c+d)-(a+b)}y_u = \frac{(1+y_{u+d-a})(1+y_{u+c-b})}{(1+y_{u+d-b}^{-1})(1+y_{u+c-a}^{-1})}.
\end{displaymath}
which after shifting indices matches \eqref{eqmain}.
\end{proof}

The quivers $Q_S$ are infinite and periodic modulo translation by $(1,0)$ (one unit to the right).  Let $Q_{n,S}$ be the quotient of $Q_S$ by $(n,0)$.  Then $Q_{n,S}$ is a finite quiver with $mn$ vertices where $m=c_2+d_2-a_2-b_2$.  Mutations of this quiver model the finite dimensional system \eqref{eqmain} subject to $y_{i+n,j} = y_{i,j}$.  Geometrically, this system amounts to the restriction of the $S$-map to families of twisted polygons.  

 \section{One dimensional $Y$-meshes} \label{sec1d}
According to Definition \ref{defmesh}, a $Y$-mesh is a grid of points satisfying certain collinearity conditions.  The definition is only reasonable in dimension at least two, as in one dimension these conditions would be vacuous.  There is an alternate definition in one dimension which we explore in this section.

Suppose $k \geq 2$, $P_1,\ldots, P_{2k}$ are points in a projective space, and the triples 
\begin{displaymath}
\{P_1,P_2,P_3\}, \{P_3, P_4, P_5\}, \ldots, \{P_{2k-1}, P_{2k}, P_1\}
\end{displaymath}
are all collinear.  The \emph{multi-ratio} of the points is
\begin{displaymath}
[P_1, P_2, \ldots, P_{2k}] = \left(\frac{P_1P_2}{P_2P_3}\right)\left(\frac{P_3P_4}{P_4P_5}\right)\left(\frac{P_{2k-1}P_{2k}}{P_{2k}P_1}\right)
\end{displaymath}
where $P_iP_j$ denotes the signed distance from $P_i$ to $P_j$.  The multi-ratio is always a projective invariant.  The $k=2$ case is the cross ratio, and when $k=3$ the multi-ratio is sometimes called a \emph{triple ratio}.

\begin{prop}
Let $S$ be a $Y$-pin and $P$ a $Y$-mesh of type $S$ and dimension $D \geq 2$.  Then
\begin{displaymath}
[P_{r+a+d}, P_{r+a+c}, P_{r+a+b}, P_{r+b+c}, P_{r+b+d}, P_{r+c+d}] = -1
\end{displaymath}
for all $r \in \mathbb{Z}$.
\end{prop}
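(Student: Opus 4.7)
The plan is to realize the six points as the vertices of a triangle together with a transversal line, and then invoke Menelaus's theorem.

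First I would check that the multi-ratio is well-defined, i.e.\ that the three consecutive triples of points required by the multi-ratio construction are collinear. By the $Y$-mesh property the four points $P_{(r+a)+x}$ with $x\in\{a,b,c,d\}$ all lie on $L_{r+a}$, so the triple $P_{r+a+d},P_{r+a+c},P_{r+a+b}$ lies on $L_{r+a}$. The analogous statements put $P_{r+a+b},P_{r+b+c},P_{r+b+d}$ on $L_{r+b}$ and $P_{r+b+d},P_{r+c+d},P_{r+a+d}$ on $L_{r+d}$.

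Next I would argue that all six points are coplanar. The lines $L_{r+a}$ and $L_{r+b}$ meet at $P_{r+a+b}$, so they span a projective plane $\Pi$. The line $L_{r+d}$ contains $P_{r+a+d}\in L_{r+a}$ and $P_{r+b+d}\in L_{r+b}$, both in $\Pi$, so $L_{r+d}\subset\Pi$. Finally, $P_{r+c+d}\in L_{r+d}\subset\Pi$. (Equivalently, this is the 2-genericity already recorded for every $Y$-mesh applied to the 2-fractal based at $r$.) Inside $\Pi$ the three lines $L_{r+a}, L_{r+b}, L_{r+d}$ form a triangle whose vertices are exactly $P_{r+a+b}=L_{r+a}\cap L_{r+b}$, $P_{r+a+d}=L_{r+a}\cap L_{r+d}$, and $P_{r+b+d}=L_{r+b}\cap L_{r+d}$; and the fourth line $L_{r+c}$ meets its three sides at $P_{r+a+c}$, $P_{r+b+c}$, $P_{r+c+d}$ respectively.

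Menelaus's theorem applied to this triangle with transversal $L_{r+c}$ gives
\begin{displaymath}
\frac{P_{r+a+b}P_{r+a+c}}{P_{r+a+c}P_{r+a+d}}\cdot\frac{P_{r+a+d}P_{r+c+d}}{P_{r+c+d}P_{r+b+d}}\cdot\frac{P_{r+b+d}P_{r+b+c}}{P_{r+b+c}P_{r+a+b}}=-1.
\end{displaymath}
Comparing with the expansion of $[P_{r+a+d}, P_{r+a+c}, P_{r+a+b}, P_{r+b+c}, P_{r+b+d}, P_{r+c+d}]$, each of its three factors is obtained from the corresponding Menelaus factor by swapping numerator and denominator (so numerically they are reciprocals). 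Hence the multi-ratio equals $(-1)^{-1}=-1$.

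The only real work is the coplanarity step and the sign bookkeeping between Menelaus and the multi-ratio conventions; both are routine once the triangle-plus-transversal picture is identified, so I expect no serious obstacle. In fact, this gives a conceptual reason that $-1$ (rather than some other constant) appears: it is simply the Menelaus value for any triangle cut by a transversal.
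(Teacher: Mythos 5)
Your proof is correct and follows essentially the same route as the paper, which simply records that the four relevant triples (on $L_{r+a}$, $L_{r+b}$, $L_{r+d}$, and $L_{r+c}$) are collinear and cites Menelaus' Theorem. You have merely made explicit the coplanarity, the triangle-plus-transversal identification, and the sign bookkeeping that the paper leaves implicit.
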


\begin{proof}
By definition of a $Y$-mesh, the triples of points 
\begin{align*}
&\{P_{r+a+d}, P_{r+a+c}, P_{r+a+b}\}, \{P_{r+a+b}, P_{r+b+c}, P_{r+b+d}\}, \{P_{r+b+d}, P_{r+c+d}, P_{r+a+d}\}, \\
&\{P_{r+a+c}, P_{r+b+c}, P_{r+c+d}\}
\end{align*} are collinear.  The result follows from Menelaus' Theorem.
\end{proof}

The triple ratio condition can be used to define one dimensional $Y$-meshes.

\begin{defin} \label{defmesh1}
Fix a $Y$-pin $S$.  A \emph{$Y$-mesh} in dimension one is a grid of points $P_{i,j} \in \mathbb{RP}^1$ indexed by $i,j \in \mathbb{Z}$ satisfying
\begin{equation} \label{eqtriple}
[P_{r+a+d}, P_{r+a+c}, P_{r+a+b}, P_{r+b+c}, P_{r+b+d}, P_{r+c+d}] = -1
\end{equation}
for all $r \in \mathbb{Z}^2$.  
\end{defin}

Let $m=c_2+d_2-a_2-b_2$.  Then the points involved in the relation \eqref{eqtriple} are contained within $m+1$ consecutive rows of the grid.  Moreover, we can solve for $P_{r+c+d}$ and determine it from the other five points.  The analogue of an $S$-map, then, is a function $T_{1,S} : \mathcal{U}_{1,m} \to \mathcal{U}_{1,m}$ taking $(A^{(1)}, \ldots, A^{(m)}) \mapsto (A^{(2)}, \ldots, A^{(m+1)})$ where the $A^{(m+1)}_i = P_{i,m+1}$ are defined so that \eqref{eqtriple} holds for all $r$ with $r_2+c_2+d_2=m+1$.

Geometrically speaking, one dimensional $Y$-meshes seem somewhat different from their higher dimensional analogues.  The real justification for giving both objects the same name is that they both exhibit the same cluster dynamics given by \eqref{eqmain}.  

\begin{prop}
Let $P$ be a $Y$-mesh of dimension one.  As in the higher dimensional case, define $y_r$ for $r \in \mathbb{Z}^2$ by $y_r =-[P_{r+a}, P_{r+c}, P_{r+b}, P_{r+d}]$.  Then
\begin{equation} \label{eqmain1d}
y_{r+a+b}y_{r+c+d} = \frac{(1+y_{r+a+c})(1+y_{r+b+d})}{(1+y_{r+a+d}^{-1})(1+y_{r+b+c}^{-1})}
\end{equation}  
for all $r \in \mathbb{Z}^2$.
\end{prop}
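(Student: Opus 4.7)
Following the approach in the proof of Proposition~\ref{propmain}, the plan is first to use the standard cross ratio identities
\[
1-[x_1,x_2,x_3,x_4]=[x_1,x_3,x_2,x_4] \quad\text{and}\quad [x_1,x_2,x_3,x_4]^{-1}=[x_1,x_4,x_3,x_2]
\]
to convert~\eqref{eqmain1d} into the assertion that an explicit product of six cross ratios of quadruples of points $P_{r+e+f+g}$ equals $1$. These are exactly the six cross ratios that appeared in the 2D proof, now read as ordinary cross ratios of four points on $\mathbb{RP}^1$, involving the four ``vertex'' points $P_{r+a+b+c}, P_{r+a+b+d}, P_{r+a+c+d}, P_{r+b+c+d}$ and ``auxiliary'' points of the form $P_{r+2x+y}$.

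In the higher-dimensional setting Lemma~\ref{lemmain} established this product identity essentially for free, via a twelve-fold cancellation of signed distances from points to the four distinct lines $L_{2a},L_{2b},L_{2c},L_{2d}$. In dimension one no such lines exist: the auxiliaries appearing in different cross ratios are genuinely different points, and the product of the six cross ratios is not identically $1$ in arbitrary points of $\mathbb{RP}^1$. The required extra input is supplied by the four triple ratio relations of Definition~\ref{defmesh1} applied at $r+a,\,r+b,\,r+c,\,r+d$. Each such relation packages three auxiliaries sharing a common doubled index together with three of the four vertices into a Menelaus-type configuration, exactly mirroring the incidences $L_{2x}\cap L_{x+y}=P_{r+2x+y}$ that organized the auxiliaries in $\mathbb{RP}^D$.

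I would complete the argument via a lifting/projection step: when $D(S)\geq 2$, a generic central projection of any $D$-dimensional $Y$-mesh of type $S$ to a generic line yields a 1D $Y$-mesh with the same cross ratios $y_r$, since both triple ratios and cross ratios are projective invariants of collinear tuples. By Proposition~\ref{propmain} the projected $y_r$ satisfy~\eqref{eqmain1d}, and a dimension count showing that this projection map is dominant onto the variety of 1D $Y$-meshes of type $S$, together with Zariski continuity of~\eqref{eqmain1d}, extends the identity to all 1D $Y$-meshes. The residual case $D(S)=1$ forces $S$ to be equivalent to $\{(0,0),(1,0),(0,1),(1,1)\}$, the lower pentagram map, for which~\eqref{eqmain1d} is the cluster recurrence already established in~\cite{GSTV}. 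The main obstacle is justifying Zariski density of the projected meshes inside all 1D $Y$-meshes; a purely algebraic alternative that bypasses density expands the product of six cross ratios as a rational function in the sixteen points $P_{r+e+f+g}$ and reduces it to $1$ modulo the ideal generated by the four triple ratio relations, which is a finite but tedious polynomial verification.
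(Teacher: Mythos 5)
Your reduction of \eqref{eqmain1d} to the statement that a product of six cross ratios equals $1$ is correct and matches the paper, and you have correctly identified that the only available input in dimension one is the four multi-ratio relations \eqref{eqtriple} at $r+a$, $r+b$, $r+c$, $r+d$. But neither of the two routes you propose from there is carried out, so the proof is incomplete. The paper's argument closes the gap with a single algebraic observation that your proposal is missing: the condition $[x_1,\ldots,x_6]=-1$ is invariant under any permutation $\pi\in S_6$ with $|\pi(4)-\pi(1)|=|\pi(5)-\pi(2)|=|\pi(6)-\pi(3)|=3$, so each of the four defining relations can be rewritten in a judiciously permuted form; the product of the four rewritten triple ratios is $(-1)^4=1$, and its twenty-four signed-length factors regroup \emph{exactly} into the six cross ratios from the proof of Proposition \ref{propmain} (four groups of six become six groups of four). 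That regrouping is the whole proof, and it is what replaces the distance cancellation of Lemma \ref{lemmain} in dimension one. Your remark that each relation \eqref{eqtriple} ``packages three auxiliaries and three vertices into a Menelaus-type configuration'' is the right intuition, but you never assert or verify the factorization itself.

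Of your two substitute strategies, the projection-and-density argument has a real unresolved obstacle, which you acknowledge: a $1$-dimensional $Y$-mesh in the sense of Definition \ref{defmesh1} is built by solving \eqref{eqtriple} for one new point per step from otherwise free data, whereas projections of $D$-dimensional meshes inherit all the incidence constraints of the ambient mesh (fractal spans, etc.), and there is no evident reason their closure exhausts the variety of $1$-dimensional meshes; without that, Zariski continuity gives you nothing. (Central projection does preserve closed-chain multi-ratios and the cross ratios $y_r$, so the easy half of your argument is fine, but the dominance claim is the entire difficulty.) The fallback ``reduce the product of six cross ratios to $1$ modulo the ideal generated by the four relations'' is not a proof until the computation is exhibited; once you try to organize that computation you are led precisely to the paper's regrouping, at which point the ideal-theoretic framing is unnecessary.
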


\begin{proof}
Assume without loss of generality that $r=0$.  One can check for $x_1,\ldots, x_6 \in \mathbb{RP}^1$, that the condition $[x_1,\ldots, x_6]=-1$ is invariant under any permutation $\pi \in S_6$ satisfying
\begin{displaymath}
|\pi(4)-\pi(1)| = |\pi(5)-\pi(2)| = |\pi(6)-\pi(3)| = 3.
\end{displaymath}
As such, each relation \eqref{eqtriple} can be written in many different ways.  Making a particular choice for each of $r=a,b,c,d$ yields
\begin{align*}
[P_{2a+d}, P_{a+b+d}, P_{2a+b}, P_{a+b+c}, P_{2a+c}, P_{a+c+d}] &= -1 \\
[P_{2b+c}, P_{a+b+c}, P_{a+2b}, P_{a+b+d}, P_{2b+d}, P_{b+c+d}] &= -1 \\
[P_{a+2c}, P_{a+b+c}, P_{b+2c}, P_{b+c+d}, P_{2c+d}, P_{a+c+d}] &= -1 \\
[P_{a+2d}, P_{a+c+d}, P_{c+2d}, P_{b+c+d}, P_{b+2d}, P_{a+b+d}] &= -1
\end{align*}
so the product of these four triple ratios equals 1.  Regrouping the factors, this product of four triple ratios equals a product of six cross ratios, specifically the six cross ratios given in the proof of Proposition \ref{propmain}.  By said proof, this product equaling 1 is equivalent to \eqref{eqmain1d}.
\end{proof}

\section{Quivers on tori} \label{seclift}
We show that the quiver $Q_S$ for any $Y$-pin $S$ can be embedded on a cylinder.  Therefore the finite quivers $Q_{n,S}$ can be embedded on a torus.  Special cases of these results were proven by Gekhtman, Shapiro, Tabachnikov and Vainshtein \cite{GSTV}.

\begin{rmk}
The idea of this section will be to ``unfold'' $Q_{n,S}$ into certain infinite quivers embedded in the plane.  These infinite quivers, and indeed the unfolding process itself, have been studied by many others including R. Eager \cite{E}, S. Franco et. al \cite{FHMSVW}, and I. Jeong, G. Musiker, and S. Zhang \cite{JMZ}.  For completeness we give an independent presentation, but many of the ideas we use are similar to ones from these other works.
\end{rmk}

Let $S = \{a,b,c,d\}$ be a $Y$-pin.  Recall the quiver $Q_S$ is constructed by applying Proposition \ref{propQexists} with $(i_0,l) = c+d-(a+b)$, $m_{c-a}=m_{d-b} = 1$, and $m_{c-b}=m_{d-a}=-1$.  By the proof of Proposition \ref{propQexists}, the arrows of the quiver will come in six types, which we assign compass direction names (see Table \ref{tabcompass}) for future use.

\begin{table}
\begin{tabular}{l|l|l|l}
Name & Source & Target & Displacement  \\
\hline
North & $(i,j)+d-a$ & $(i,j)$ & $a-d$ \\
South & $(i,j)+c-b$ & $(i,j)$ & $b-c$ \\
East & $(i,j)$ & $(i,j)+d-b$ & $d-b$ \\
West & $(i,j)$ & $(i,j)+c-a$ & $c-a$ \\
Northeast & $(i,j)+c-a$ & $(i,j)+c-b$ & $a-b$ \\
Northwest & $(i,j)+d-b$ & $(i,j)+c-b$ & $c-d$ \\
\end{tabular}
\caption{The six types of arrows appearing in $Q_S$.}
\label{tabcompass}
\end{table}  

Specifically, the four main compass direction arrows are drawn wherever possible within the vertex set $V = \mathbb{Z} \times \{0,1,\ldots, l-1\}$.  A Northeast arrow from $(i,j)+c-a$ to $(i,j)+c-b$ is only drawn if it completes two oriented triangles, one with $(i,j)$ and one with $(i,j)+2c-a-b$.  A Northwest arrow from $(i,j)+d-b$ to $(i,j)+c-b$ is only drawn if it completes two oriented triangles, one with $(i,j)$ and one with $(i,j)+c+d-2b$.  Figure \ref{figCompass} shows the five types of arrows in the quiver $Q_S$ for $S=\{(-1,1),(1,1),(0,2),(0,3)\}$.  The Northwest arrow from $d$ to $c$ is not drawn because it never actually appears in the quiver.  The quiver itself appears in Figure \ref{figquiver}.

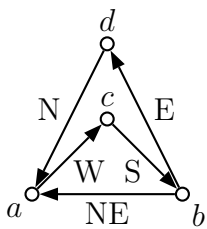
\begin{figure}
\begin{pspicture}(4,4)
%\pnode(1,1){a}
%\pnode(3,1){b}
%\pnode(2,2){c}
%\pnode(2,3){d}
%\psdots(a)(b)(c)(d)
\cnode(1,1){.1}{a}
\cnode(3,1){.1}{b}
\cnode(2,2){.1}{c}
\cnode(2,3){.1}{d}
\uput[dl](a){$a$}
\uput[dr](b){$b$}
\uput[u](c){$c$}
\uput[u](d){$d$}

\psset{arrowsize=5pt}
\psset{arrowinset=0}
\ncline{<-}{b}{c}
\Aput[1pt]{S}
\ncline{<-}{a}{d}
\Aput[2pt]{N}
\ncline{<-}{c}{a}
\Aput[1pt]{W}
\ncline{<-}{d}{b}
\Aput[2pt]{E}
\ncline{<-}{a}{b}
\Bput[2pt]{NE}
%\ncline{->}{c}{d}
\end{pspicture}
\caption{The five arrow types in $Q_S$ for $S=\{(-1,1),(1,1),(0,2),(0,3)\}$.}
\label{figCompass}
\end{figure}

%Identify $V = \mathbb{Z} \times \{0,1,\ldots, l-1\}$ with the quotient $\mathbb{Z}^2 / (c+d-(a+b))$ (recall $l = c_2+d_2-(a_2+b_2)$).  Define a map $\phi: \mathbb{Z}^2 \to V$ by $\phi(i,j) = (a-c)i + (c-b)j$ reduces modulo $c+d-(a+b)$.  Note that $\phi(0,1) = c-b$, $\phi(0,-1) = b-c = $

The compass directions are used to unfold $Q_S$ to an infinite quiver $\tilde{Q}_S$ on vertex set $\mathbb{Z}^2$.  Let $I$ be the principal ideal $I=(c+d-a-b) \subseteq \mathbb{Z}^2$.  Note that $V$ is a set of coset representatives for $I$ as $l = c_2+d_2-a_2-b_2$.  Define $\phi : \mathbb{Z}^2 \to V$ so that 
\begin{displaymath}
\phi(i,j) \equiv (d-b)i + (a-d)j \pmod{I}
\end{displaymath}
for all $i,j \in \mathbb{Z}$.

\begin{lem} \label{lemlift}
Let $\alpha$ be an arrow of $Q_S$ from $u$ to $v$.
\begin{itemize}
\item Suppose $\tilde{u} \in \phi^{-1}(u)$ and $\tilde{\alpha}$ is the arrow starting at $\tilde{u}$ with displacement dictated by the type of $\alpha$ (e.g. if $\alpha$ is North then $\tilde{\alpha}$ has displacement $(0,1)$, if $\alpha$ is Northwest then $\tilde{\alpha}$ has displacement $(-1,1)$).  Then the target $\tilde{v}$ of $\tilde{\alpha}$ satisfies $\phi(\tilde{v}) = v$.
\item Suppose $\tilde{v} \in \phi^{-1}(v)$ and $\tilde{\alpha}$ is the arrow with target $\tilde{v}$ and displacement dictated by the type of $\alpha$.  Then the source $\tilde{u}$ of $\tilde{\alpha}$ satisfies $\phi(\tilde{u}) = u$.
\end{itemize}
\end{lem}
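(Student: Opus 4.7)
The plan is to reduce both bullets of the lemma to a single linearity check on $\phi$. First, I would specify the lift $\tilde{Q}_S$ by assigning to each of the six compass directions in Table \ref{tabcompass} its natural displacement in $\mathbb{Z}^2$: $(0,\pm 1)$ for North and South, $(\pm 1,0)$ for East and West, $(1,1)$ for Northeast, and $(-1,1)$ for Northwest. With these conventions $\tilde{Q}_S$ is translation-invariant on $\mathbb{Z}^2$, which parallels the translation-invariance of $Q_S$ on $V$ under $(i,j) \mapsto (i+1,j)$.

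The key observation is that, under the identification of $V$ with a complete set of coset representatives for the ideal $I = (c+d-a-b)$, the map $\phi$ is induced by the $\mathbb{Z}$-linear map $\mathbb{Z}^2 \to \mathbb{Z}^2/I$ sending $(i,j)$ to $(d-b)i + (a-d)j$. Consequently, for any $\tilde{u}, w \in \mathbb{Z}^2$,
\begin{displaymath}
\phi(\tilde{u}+w) \equiv \phi(\tilde{u}) + (d-b)\,w_1 + (a-d)\,w_2 \pmod{I}.
\end{displaymath}
Both bullets of the lemma then reduce to a single assertion: for each compass type, if $w \in \mathbb{Z}^2$ is the $\tilde{Q}_S$-displacement and $w' \in \mathbb{Z}^2$ is the corresponding $Q_S$-displacement from Table \ref{tabcompass}, then $(d-b)w_1 + (a-d)w_2 \equiv w' \pmod{I}$. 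Indeed, once this congruence is verified, the forward direction of each bullet (compute the target from the source, or the source from the target) is immediate.

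The remaining work is six one-line verifications of this congruence, each using at most the single generating relation $c+d-a-b \equiv 0 \pmod{I}$. For example, North gives $a-d$ exactly, East gives $d-b$ exactly, and Northeast gives $(d-b)+(a-d) = a-b$ exactly; the Northwest case gives $-(d-b)+(a-d) = a+b-2d$, which differs from the tabulated $c-d$ by $-(c+d-a-b) \in I$; South and West are handled symmetrically by negating North and East. There is no substantive obstacle here — the lemma is essentially a bookkeeping statement that $\phi$ is compatible with the natural planar lifts of the compass directions — but care is needed with signs and with the one case (Northwest) where the ideal relation is actually used rather than trivially avoided.
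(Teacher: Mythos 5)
Your proposal is correct and follows essentially the same route as the paper: both arguments reduce the two bullets, for each compass type, to checking that the linear map $(i,j)\mapsto (d-b)i+(a-d)j$ sends the planar displacement to the tabulated $Q_S$-displacement modulo $I=(c+d-a-b)$. One small correction to your bookkeeping: Northwest is not the only case where the generating relation is needed --- South requires $d-a\equiv b-c$ and West requires $b-d\equiv c-a$ (the latter is precisely the computation the paper writes out), so three of the six cases use the relation and three are exact equalities; this does not affect the validity of your argument, but "negating North and East" alone does not literally produce the South and West entries of Table \ref{tabcompass}.
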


\begin{proof}
The different compass directions are handled case by case.  Suppose $\alpha$ is an East arrow, meaning $v = u+d-b$.  If $\tilde{v} = \tilde{u} + (1,0)$ then clearly
\begin{align*}
&u \equiv \tilde{u}_1(d-b) + \tilde{u}_2(a-d) \pmod{I} \\
&\Longleftrightarrow v \equiv \tilde{v}_1(d-b) + \tilde{v}_2(a-d) \pmod{I}
\end{align*}
so $\phi(\tilde{u}) = u \Leftrightarrow \phi(\tilde{v}) = v$.  Now suppose $\alpha$ is a West arrow, so $v = u + c-a$.  If $\tilde{v} = \tilde{u} + (-1,0)$ then
\begin{align*}
&u \equiv \tilde{u}_1(d-b) + \tilde{u}_2(a-d) \pmod{I} \\
&\Longleftrightarrow v \equiv \tilde{u}_1(d-b) + \tilde{u}_2(a-d) + (c-a) \pmod{I} \\
&\Longleftrightarrow v \equiv \tilde{u}_1(d-b) + \tilde{u}_2(a-d) + (b-d) \pmod{I} \\
&\Longleftrightarrow v \equiv \tilde{v}_1(d-b) + \tilde{v}_2(a-d) \pmod{I}
\end{align*}
The other cases are similar.
\end{proof}

In the context of Lemma \ref{lemlift}, say that $\tilde{\alpha}$ is a lift of $\alpha$.  The unfolded quiver $\tilde{Q}_S$ is defined to be the vertex set $\mathbb{Z}^2$ together with all lifts of all arrows of $Q_S$.  Figure \ref{figlift} shows the unfolding of the quiver $Q_S$ from Figure \ref{figquiver}.  

\begin{figure}
\begin{pspicture}(-1,-1)(7,7)
\cnode(1,1){.1}{v11}
\cnode(2,1){.1}{v21}
\cnode(3,1){.1}{v31}
\cnode(4,1){.1}{v41}
\cnode(5,1){.1}{v51}
\cnode(1,2){.1}{v12}
\cnode(2,2){.1}{v22}
\cnode(3,2){.1}{v32}
\cnode(4,2){.1}{v42}
\cnode(5,2){.1}{v52}
\cnode(1,3){.1}{v13}
\cnode(2,3){.1}{v23}
\cnode(3,3){.1}{v33}
\cnode(4,3){.1}{v43}
\cnode(5,3){.1}{v53}
\cnode(1,4){.1}{v14}
\cnode(2,4){.1}{v24}
\cnode(3,4){.1}{v34}
\cnode(4,4){.1}{v44}
\cnode(5,4){.1}{v54}
\cnode(1,5){.1}{v15}
\cnode(2,5){.1}{v25}
\cnode(3,5){.1}{v35}
\cnode(4,5){.1}{v45}
\cnode(5,5){.1}{v55}

\uput[l](v11){$4$}
\uput[d](v21){$3''$}
\uput[d](v31){$2'$}
\uput[d](v41){$1$}
\uput[r](v51){$0''$}
\uput[l](v12){$3'$}
\uput[dl](v22){$2$}
\uput[dl](v32){$1''$}
\uput[dr](v42){$0'$}
\uput[r](v52){$-1$}
\uput[l](v13){$2''$}
\uput[dr](v23){$1'$}
\uput[dr](v33){$0$}
\uput[ur](v43){$-1''$}
\uput[r](v53){$-2'$}
\uput[l](v14){$1$}
\uput[ul](v24){$0''$}
\uput[ul](v34){$-1'$}
\uput[ur](v44){$-2$}
\uput[r](v54){$-3''$}
\uput[l](v15){$0'$}
\uput[u](v25){$-1$}
\uput[u](v35){$-2''$}
\uput[u](v45){$-3'$}
\uput[r](v55){$-4$}

\psset{arrowsize=5pt}
\psset{arrowinset=0}
\ncline{<-}{v11}{v12}
\ncline{<-}{v12}{v13}
\ncline{->}{v13}{v14}
\ncline{<-}{v14}{v15}
\ncline{->}{v21}{v22}
\ncline{<-}{v22}{v23}
\ncline{<-}{v23}{v24}
\ncline{->}{v24}{v25}
\ncline{<-}{v31}{v32}
\ncline{->}{v32}{v33}
\ncline{<-}{v33}{v34}
\ncline{<-}{v34}{v35}
\ncline{<-}{v41}{v42}
\ncline{<-}{v42}{v43}
\ncline{->}{v43}{v44}
\ncline{<-}{v44}{v45}
\ncline{->}{v51}{v52}
\ncline{<-}{v52}{v53}
\ncline{<-}{v53}{v54}
\ncline{->}{v54}{v55}

\ncline{->}{v11}{v21}
\ncline{<-}{v21}{v31}
\ncline{<-}{v31}{v41}
\ncline{->}{v41}{v51}
\ncline{<-}{v12}{v22}
\ncline{->}{v22}{v32}
\ncline{<-}{v32}{v42}
\ncline{<-}{v42}{v52}
\ncline{<-}{v13}{v23}
\ncline{<-}{v23}{v33}
\ncline{->}{v33}{v43}
\ncline{<-}{v43}{v53}
\ncline{->}{v14}{v24}
\ncline{<-}{v24}{v34}
\ncline{<-}{v34}{v44}
\ncline{->}{v44}{v54}
\ncline{<-}{v15}{v25}
\ncline{->}{v25}{v35}
\ncline{<-}{v35}{v45}
\ncline{<-}{v45}{v55}

\ncline{<-}{v23}{v12}
\ncline{<-}{v34}{v23}
\ncline{<-}{v45}{v34}
\ncline{<-}{v42}{v31}
\ncline{<-}{v53}{v42}

\psdots(5.5,5.5)(6,6)(6.5,6.5)
\psdots(5.5,0.5)(6,0)(6.5,-0.5)
\psdots(0.5,5.5)(0,6)(-0.5,6.5)
\psdots(0.5,0.5)(0,0)(-0.5,-0.5)

\end{pspicture}
\caption{The lift $\tilde{Q}_S$ of the quiver $Q_S$ in Figure \ref{figquiver}.  Each vertex $v$ is labeled by $\phi(v)$ using the identification $i=(i,0)$, $i'=(i,1)$, $i''=(i,2)$.}
\label{figlift}
\end{figure}
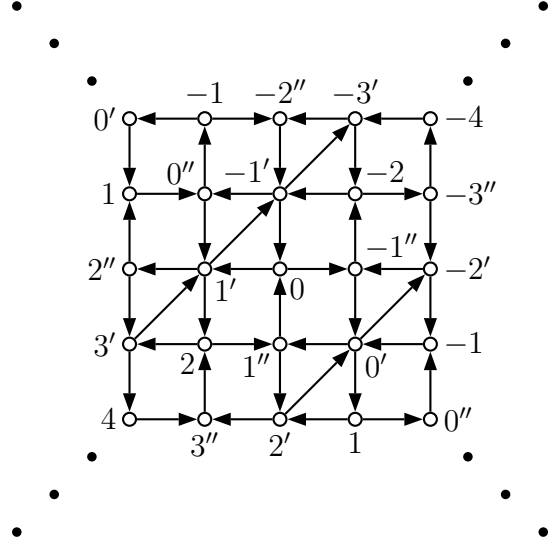

By Lemma \ref{lemlift}, the quiver $\tilde{Q}_S$ looks locally near each $(i,j) \in \mathbb{Z}^2$ like $Q_S$ does around $\phi(i,j)$.  To get a global picture of the relationship between the quivers, it is necessary to better understand the function $\phi$.

\begin{lem}
Let $S$ be a $Y$-pin and define $\phi: \mathbb{Z}^2 \to V$ as above.  Then $\phi$ is surjective.  Moreover, $\tilde{I} = \phi^{-1}(0,0) \subseteq \mathbb{Z}^2$ is a nonzero principal ideal and $\phi(i,j) = \phi(i',j')$ if and only if $(i',j') - (i,j) \in \tilde{I}$.
\end{lem}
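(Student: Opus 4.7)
The plan is to factor $\phi$ through the quotient homomorphism $\mathbb{Z}^2 \to \mathbb{Z}^2/I$, which reduces the whole lemma to a statement about the $\mathbb{Z}$-linear map
\begin{displaymath}
\psi : \mathbb{Z}^2 \to \mathbb{Z}^2/I, \qquad \psi(i,j) = i(d-b) + j(a-d) + I.
\end{displaymath}
Because $V$ is a complete set of coset representatives for $I$, the canonical projection $\mathbb{Z}^2 \to \mathbb{Z}^2/I$ composed with the identification $\mathbb{Z}^2/I \cong V$ is a bijection, and $\phi$ is this bijection composed with $\psi$. Consequently $\phi$ is surjective precisely when $\psi$ is, $\tilde{I} = \ker \psi$, and $\phi(i,j) = \phi(i',j')$ precisely when $(i'-i,\, j'-j) \in \ker \psi$; so the last clause of the lemma follows automatically once the kernel is understood.

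For surjectivity of $\psi$, it suffices to check that the subgroup of $\mathbb{Z}^2$ generated by $d-b$, $a-d$, and the generator $c+d-a-b$ of $I$ is all of $\mathbb{Z}^2$. From these three vectors one directly recovers
\begin{displaymath}
d-a = -(a-d), \qquad b-a = -(d-b)-(a-d), \qquad c-a = (c+d-a-b)-(d-b),
\end{displaymath}
and by Definition \ref{defpin} the triple $b-a, c-a, d-a$ already generates $\mathbb{Z}^2$.

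It remains to show $\tilde{I}$ is principal and nonzero. By the first isomorphism theorem $\mathbb{Z}^2/\tilde{I} \cong \mathbb{Z}^2/I$, and the right-hand side is an infinite abelian group of rank one (concretely, isomorphic to $\mathbb{Z} \oplus \mathbb{Z}/g\mathbb{Z}$ where $g = \gcd(i_0, l)$, since $I$ is the cyclic subgroup generated by $(i_0,l)$). Hence $\tilde{I}$ has rank one as a subgroup of $\mathbb{Z}^2$, and every rank-one subgroup of $\mathbb{Z}^2$ is freely generated by a single nonzero vector, which is the desired principal generator. The one nontrivial step in the whole argument is the spanning calculation above; the rest is formal manipulation via the first isomorphism theorem.
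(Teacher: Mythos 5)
Your proof is correct. The structural reduction (factor $\phi$ through $\mathbb{Z}^2\to\mathbb{Z}^2/I$, so that surjectivity, the identification $\tilde I=\ker\psi$, and the coset statement all become formal) matches the paper's setup, and your surjectivity computation is the same in substance as the paper's: both ultimately invoke the $Y$-pin condition that $b-a$, $c-a$, $d-a$ generate $\mathbb{Z}^2$, you by exhibiting these three vectors inside the subgroup generated by $d-b$, $a-d$, $c+d-a-b$, the paper by solving $(j+k)a+(-i+k)b+(-k)c+(i-j-k)d=v$ for $(i,j,k)$. Where you genuinely diverge is in proving that $\tilde I$ is nonzero and principal: you argue abstractly via the first isomorphism theorem, the rank-one computation $\mathbb{Z}^2/I\cong\mathbb{Z}\oplus\mathbb{Z}/\gcd(i_0,l)\mathbb{Z}$, and the fact that a rank-one subgroup of $\mathbb{Z}^2$ is free cyclic. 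The paper instead rewrites the kernel condition $(d-b)i+(a-d)j=k(c+d-a-b)$ as a convex relation $(j+k)a+(-i+k)b+(-k)c+(i-j-k)d=0$ on $S$ and identifies $\tilde I$ with the (infinite cyclic) group of convex relations studied in Section \ref{seclattice}. Your route is shorter and self-contained, but the paper's buys an explicit generator of $\tilde I$ coming from the primitive convex relation, which is exactly what is used in the example immediately following the lemma (where $a+b-4c+2d=0$ yields the period $(3,-3)$) and ties the unfolding of $Q_S$ to the lattice $\Lambda(S)$ and $M(S)$. If you adopt your version you should still record how to extract the generator, since later computations rely on it.
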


\begin{proof}
Note that $\phi$ is the composition of the linear map
\begin{displaymath}
(i,j) \mapsto (d-b)i + (a-d)j
\end{displaymath}
followed by reduction modulo $I$.  It follows that $\tilde{I}$ is an ideal and that the nonempty inverse images $\phi^{-1}(v)$ for $v \in V$ are its cosets.  An expression $\phi(i,j) = (0,0)$ is equivalent to
\begin{displaymath}
(d-b)i + (a-d)j = k(c+d-a-b)
\end{displaymath}
for some $k \in \mathbb{Z}$ which can be rewritten as
\begin{equation} \label{eqkernel}
(j+k)a + (-i+k)b + (-k)c + (i-j-k)d = 0.
\end{equation}
Note the coefficients sum to 0 so \eqref{eqkernel} is a convex relation on $S$ (see Section \ref{seclattice}).  It is easy to see any such relation in turn comes from a unique $(i,j) \in \phi^{-1}(0,0)$.  In particular, a minimal convex relation comes from a nonzero generator of $\tilde{I}$.

For general $v \in V$, we get by a similar calculation that $\phi(i,j)= v$ if and only if there exists $k \in \mathbb{Z}$ with
\begin{displaymath}
(j+k)a + (-i+k)b + (-k)c + (i-j-k)d = v.
\end{displaymath}
For each $v$ there is a solution in $i,j,k$ since $b-a,c-a,d-a$ generate $\mathbb{Z}^2$.
\end{proof}

\begin{ex}
Let $a=(-1,1), b=(1,1), c=(0,2), d=(0,3)$.  A minimal convex relation is
\begin{displaymath}
a + b - 4c + 2d = 0.
\end{displaymath}
To match \eqref{eqkernel} requires $i=3,j=-3,k=4$.  Therefore $\phi$ is periodic modulo $(i,j) = (3,-3)$ as can be seen in the labeling of Figure \ref{figlift}.  The original quiver $Q_S$ is recovered from $\tilde{Q}_S$ via quotienting by this vector. 
\end{ex}

In general, we can obtain $Q_S$ by taking the quotient of $\tilde{Q}_S$ by $\tilde{I}$.  Since $\tilde{I}$ is generated by a single vector, the result is a drawing of $Q_S$ on the cylinder.  Moreover, this drawing of $Q_S$ is an embedding provided that $\tilde{Q}_S$ is embedded in the plane.  Before proving this last fact, we collect some properties of $\tilde{Q}_S$ that follow directly from properties of $Q_S$.

\begin{prop} \label{propQtilde}
In the quiver $\tilde{Q}_S$:
\begin{enumerate}
\item for each $i,j \in \mathbb{Z}$ there is a single arrow with some orientation connecting $(i,j)$ and $(i+1,j)$,
\item for each $i,j \in \mathbb{Z}$ there is a single arrow with some orientation connecting $(i,j)$ and $(i,j+1)$,
\item there is an arrow from $(i+1,j)$ to $(i,j+1)$ if and only if it breaks the surrounding primitive square into two oriented triangles,
\item there is an arrow from $(i,j)$ to $(i+1,j+1)$ if and only if it breaks the surrounding primitive square into two oriented triangles.
\end{enumerate}
\end{prop}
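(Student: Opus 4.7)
The plan is to analyze the arrows of $\tilde{Q}_S$ by tracking each of the six compass-direction arrow types of $Q_S$ through the lift. First, by a direct computation with the linear map $\phi$, I would show that North, South, East, West, Northeast, and Northwest arrows lift to arrows with displacements $(0,1)$, $(0,-1)$, $(1,0)$, $(-1,0)$, $(1,1)$, and $(-1,1)$ in $\mathbb{Z}^2$ respectively. Hence every arrow of $\tilde{Q}_S$ is horizontal, vertical, or a diagonal of one of the two types appearing in the proposition.

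To prove (1), let $u = \phi(i,j)$ and note $\phi(i+1,j) \equiv u + (d-b) \pmod{I}$. Since the two candidate representatives $u+(d-b)$ and $u-(c-a)$ differ by the generator of $I$, exactly one lies in $V$: the former if $u_2 < c_2-a_2$, the latter if $u_2 \geq c_2-a_2$. In the former case $Q_S$ contains the East arrow $u \to u+(d-b)$, lifting to the rightward arrow $(i,j) \to (i+1,j)$; in the latter case $Q_S$ contains the West arrow $u-(c-a) \to u$, lifting to the leftward arrow $(i+1,j) \to (i,j)$. Uniqueness follows from a short check that no other compass type in $Q_S$ can connect a pair of $\phi$-values differing by $d-b$ modulo $I$. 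Statement (2) I would prove by the symmetric analysis using North and South arrows, branching on $u_2 \gtrless d_2-a_2$.

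For (3) and (4) I would treat (4); statement (3) is analogous with NE replaced by NW. Writing a candidate NE source as $v = w+(c-a)$, the construction of $Q_S$ places the NE arrow $w+(c-a) \to w+(c-b)$ precisely when $w$, $w+(c-a)$, $w+(c-b)$, $w+(2c-a-b)$ all lie in $V$, equivalently $v_2 \in [c_2-a_2,\,d_2-a_2-1]$. A careful computation then identifies the four corners $(i,j), (i+1,j), (i+1,j+1), (i,j+1)$ of the surrounding primitive square with $\phi$-values $v,\,w,\,w+(c-b),\,w+(2c-a-b)$ respectively, so that the two triangles used by the $Q_S$-construction to justify the NE arrow lift precisely to the two triangles of the primitive square split by the NE diagonal. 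This gives the ``only if'' direction of (4). For the converse, I would translate the NE-friendly orientations (bottom leftward, left downward, right downward, top leftward) using the case analysis of (1) and (2), observing that the first two give exactly the NE existence range on $v_2$ and the remaining two are then forced by $a_2 \leq b_2 < c_2 \leq d_2$.

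The hard part will be the $\phi$-value bookkeeping in (3) and (4): one must check that on the NE existence range the corner representatives in $V$ align exactly with the four vertices appearing in the $Q_S$ triangle configuration (in particular that $\phi(i,j+1) = w+(2c-a-b)$ rather than $w+(c-d)$), and each of the automatic side-orientation implications has to be verified by unpacking the $Y$-pin inequalities. Once this bookkeeping is established the equivalence between existence and the ``two oriented triangles'' condition drops out cleanly from the construction of $Q_S$ and the local nature of the lift guaranteed by Lemma~\ref{lemlift}.
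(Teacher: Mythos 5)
Your proposal is correct and amounts to exactly the verification the paper leaves implicit: the paper states Proposition \ref{propQtilde} with no proof at all, asserting that it ``follows directly'' from the description of $Q_S$ given after Table \ref{tabcompass} (main compass arrows drawn wherever both endpoints lie in $V$, diagonal arrows exactly when they complete two oriented triangles) together with Lemma \ref{lemlift}. Your case analysis on the second coordinate of $\phi(i,j)$ for parts (1) and (2), and the identification of the four corners of the primitive square with the four vertices of the two-triangle configuration in $Q_S$ for parts (3) and (4), is precisely the bookkeeping the paper intends the reader to supply.
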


\begin{prop}
Let $S$ be a $Y$-pin.  Then $\tilde{Q}_S$ is embedded in the plane.  Therefore $Q_S$ can be embedded on a cylinder and $Q_{n,S}$ can be embedded on a torus for all $n$.
\end{prop}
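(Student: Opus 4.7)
The plan is to draw $\tilde Q_S$ with vertices at the lattice points $\mathbb{Z}^2\subset\mathbb{R}^2$ and each arrow realised as the straight segment between its endpoints. By Proposition \ref{propQtilde}, every arrow is either horizontal between $(i,j)$ and $(i+1,j)$, vertical between $(i,j)$ and $(i,j+1)$, or a diagonal of a single primitive unit square. Horizontal and vertical segments meet each other only at shared lattice points, and each diagonal lies strictly inside the open interior of its primitive square, so it meets any horizontal or vertical segment only at a shared endpoint. Hence the only candidate for a non-endpoint crossing in the drawing is the presence of both diagonals in one and the same primitive square.

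The crux of the proof is to rule out this last case. Label the corners of a primitive square $v_1=(i,j)$, $v_2=(i+1,j)$, $v_3=(i,j+1)$, $v_4=(i+1,j+1)$. If the NE diagonal $v_1\to v_4$ is present, Proposition \ref{propQtilde}(4) makes the triangle on $v_1,v_2,v_4$ a directed $3$-cycle; the only $3$-cycle containing the edge $v_1\to v_4$ is $v_1\to v_4\to v_2\to v_1$, so the bottom side must be oriented $v_2\to v_1$. Symmetrically, if the NW diagonal $v_2\to v_3$ is present, Proposition \ref{propQtilde}(3) forces the triangle on $v_1,v_2,v_3$ to be the $3$-cycle $v_2\to v_3\to v_1\to v_2$, whence the bottom side must be oriented $v_1\to v_2$. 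By Proposition \ref{propQtilde}(1) the bottom side carries a single arrow, and these two orientations are incompatible. Consequently at most one diagonal is drawn in each primitive square, and the entire drawing is an embedding of $\tilde Q_S$ in $\mathbb{R}^2$.

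For the two corollaries I would argue as follows. By construction $\tilde Q_S$ is invariant under translation by any vector in the rank-one sublattice $\tilde I\subset\mathbb{Z}^2$, so the planar embedding descends to an embedding of the quotient $\mathbb{R}^2/\tilde I$, which is topologically a cylinder; by the preceding lemmas this quotient of $\tilde Q_S$ is identified with $Q_S$. For $Q_{n,S}$ one further quotients by any lift of $(n,0)\in V$, and since $(n,0)\neq 0$ in $V$ for $n\geq 1$, such a lift is linearly independent from the generator of $\tilde I$; the two together span a rank-two sublattice of $\mathbb{Z}^2$ whose quotient of $\mathbb{R}^2$ is a torus, and the embedding of $\tilde Q_S$ descends to an embedding of $Q_{n,S}$ on this torus. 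The only genuinely subtle point in the whole proof is the incompatibility of the two diagonal orientations, which is handled by the short cycle-counting argument above.
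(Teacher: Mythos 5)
Your proof is correct and follows essentially the same route as the paper: the only possible crossing is the pair of diagonals of a single primitive square, and Proposition \ref{propQtilde} forces the Northeast diagonal to require a West arrow on the shared horizontal edge while the Northwest diagonal requires an East arrow there, which is incompatible with that edge carrying a single arrow. Your treatment of the descent to the cylinder and torus is slightly more detailed than the paper's, but the substance is the same.
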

\begin{proof}
The only arrows that could potentially cross would be a Northeast arrow from $(i,j)$ to $(i+1,j+1)$ and a Northwest arrow from $(i+1,j)$ to $(i,j+1)$.  Proposition \ref{propQtilde} implies that these arrows cannot actually both be present.  Indeed, the Northeast arrow can only exist if $(i,j)$, $(i+1,j)$ are joined by a West arrow, whereas the Northwest arrow can only exist if $(i,j)$, $(i+1,j)$ are joined by an East arrow.
\end{proof}

\begin{rmk}
In general, nonequivalent $S$ and $S'$ give rise to different quivers $Q_S$ and $Q_{S'}$, but they may lift to identical $\tilde{Q}_S = \tilde{Q}_{S'}$.  More specifically, the lifted quiver depends only on $p=d_2-b_2$, $q=c_2-b_2$, and $m=c_2+d_2-a_2-b_2$.  There is a simple, direct description of $\tilde{Q}_S$ in terms of these parameters.  Label each $(i,j) \in \mathbb{Z}^2$ by $pi+qj$ reduced modulo $m$ to $\{0,1,\ldots, m-1\}$.  For each $i,j$, draw an arrow connecting $(i,j)$ with $(i+1,j)$ oriented towards the point with the bigger label.  Also for each $i,j$, draw an arrow connecting $(i,j)$ with $(i,j+1)$ oriented towards the point with the smaller label.  Lastly, add in diagonal arrows according to the last two parts of Proposition \ref{propQtilde}.  Up to differing conventions, this matches the construction by Jeong, Musiker, and Zhang \cite{JMZ} of quivers they denote $\tilde{Q}_N^{(r,s)}$.
\end{rmk}

\section{Geometric interpretation of general $y$-variables} \label{secratios}
Let $S$ be a $Y$-pin.  We have seen that the recurrence \eqref{eqmain} is satisfied both by certain $y$-variables under periodic mutations of the quiver $Q_S$ and by certain cross ratios of the points in a $Y$-mesh.  These $y$-variables $y_{i,j} = y_{i,j}^{(m)}$ (where $m=c_2+d_2-a_2-b_2$) are only a subset of the $y_{i,j}^{(k)}$, $1 \leq k \leq m$ that arise under periodic mutation, which in turn represent only a small portion of the full $Y$-pattern.  In this section, we find geometric interpretations for some of the other $y$-variables, including all of the $y_{i,j}^{(k)}$.

In all cases discussed, the $y$-variable ends up being (up to sign) a multi-ratio.  For convenience, define
\begin{displaymath}
[P_1,L_1,\ldots, P_k,L_k] = [P_1, \meet{\join{P_1}{P_2}}{L_1}, P_2, \ldots, P_k, \meet{\join{P_k}{P_1}}{L_k}]
\end{displaymath}
for points $P_1,\ldots, P_k$ and lines $L_1,\ldots, L_k$ with the property that $\join{P_i}{P_{i+1}}$ intersects $L_i$ for all $i$.  As explained in the cross ratio case in Section \ref{seccluster}, such a ratio can be expressed in terms of perpendicular distances as
\begin{displaymath}
[P_1,L_1,\ldots, P_k,L_k] = (-1)^k\frac{d(P_1,L_1)}{d(P_2,L_1)}\frac{d(P_2,L_2)}{d(P_3,L_2)} \cdots \frac{d(P_k,L_k)}{d(P_1,L_k)}
\end{displaymath}

Fix a $Y$-mesh $P$ and let 
\begin{displaymath}
y_r = y_r^{(m)} = -[P_{r+a}, P_{r+c}, P_{r+b}, P_{r+d}]
\end{displaymath}
for all $r \in \mathbb{Z}^2$.

To simplify notation we describe the $y_{c+d}^{(k)}$ for all $k = 1,2,\ldots, m$ as well as some related variables.  General $y_r^{(k)}$ can be founded by shifting subscripts in the formulas that follow.  As explained in Section \ref{secquiver}, all $y_{c+d}^{(k)}$ can be expressed in terms of certain $y_r$.  The two simplest cases are
\begin{align*}
y_{c+d}^{(1)} &= 1/y_{a+b} = -1/[P_{2a+b}, P_{a+b+c}, P_{a+2b}, P_{a+b+d}]\\
&= -[P_{a+b+c}, P_{a+2b}, P_{a+b+d}, P_{2a+b}]
\end{align*}  
and 
\begin{displaymath}
y_{c+d}^{(m)} = -[P_{a+c+d}, P_{2c+d}, P_{b+c+d}, P_{c+2d}].
\end{displaymath}
These two variables are related by \eqref{eqmain} as follows
\begin{displaymath}
y_{c+d}^{(m)} = y_{c+d}^{(1)}\frac{(1+y_{a+c})(1+y_{b+d})}{(1+y_{a+d}^{-1})(1+y_{b+c}^{-1})}
\end{displaymath}
More specifically, starting with $y_{c+d}^{(1)}$ each of the four factors
\begin{displaymath}
1+y_{a+c}, 1+y_{b+d}, \frac{1}{1+y_{a+d}^{-1}}, \frac{1}{1+y_{b+c}^{-1}}
\end{displaymath}
gets multiplied in during one of the next $m-1$ steps ultimately resulting in $y_{c+d}^{(m)}$.  As such, each $y_{c+d}^{(k)}$ equals $y_{c+d}^{(1)}$ multiplied by some subset of these factors.  Which subset corresponds to which $k$ depends on $S$.  Amazingly, whichever of the sixteen subsets is chosen, the result is a multi-ratio of the points of $P$.  

As a first example, 
\begin{align*}
y_{c+d}^{(1)}(1+y_{a+c}) &= -[P_{a+b+c}, P_{a+2b}, P_{a+b+d}, P_{2a+b}](1-[P_{2a+c}, P_{a+2c}, P_{a+b+c}, P_{a+c+d}]) \\
&= -[P_{a+b+c}, P_{a+2b}, P_{a+b+d}, P_{2a+b}][P_{a+c+d}, P_{a+2c}, P_{a+b+c}, P_{2a+c}] \\
&= -[P_{a+b+c}, L_{2b}, P_{a+b+d}, L_{2a}][P_{a+c+d}, L_{2c}, P_{a+b+c}, L_{2a}] \\
&= [P_{a+b+c}, L_{2b}, P_{a+b+d}, L_{2a}, P_{a+c+d}, L_{2c}] \\
&= [P_{a+b+c}, P_{a+2b}, P_{a+b+d}, P_{2a+d}, P_{a+c+d}, P_{a+2c}]
\end{align*}

We lack a simple rule to describe which multi-ratio to take in each case, and it would be a bit cumbersome to list all sixteen formulas.  The following are the only quantities different from $y_{c+d}^{(1)}$ and $y_{c+d}^{(m)}$ that can actually occur among the $y_{c+d}^{(k)}$.

\begin{prop} \label{propothery}
\begin{align*}
&y_{c+d}^{(1)}(1+y_{a+c}) = [P_{a+b+c}, P_{a+2b}, P_{a+b+d}, P_{2a+d}, P_{a+c+d}, P_{a+2c}] \\
&y_{c+d}^{(1)}\frac{1+y_{a+c}}{1+y_{a+d}^{-1}} = [P_{a+b+c}, P_{a+2b}, P_{a+b+d}, P_{a+2d}, P_{a+c+d}, P_{a+2c}] \\
&y_{c+d}^{(1)}\frac{1+y_{a+c}}{1+y_{b+c}^{-1}} = [P_{b+c+d}, P_{2b+d}, P_{a+b+d}, P_{2a+d}, P_{a+c+d}, P_{2c+d}] \\
&y_{c+d}^{(1)}\frac{1+y_{a+c}}{(1+y_{a+d}^{-1})(1+y_{b+c}^{-1})} = [P_{b+c+d}, P_{2b+d}, P_{a+b+d}, P_{a+2d}, P_{a+c+d}, P_{2c+d}]
\end{align*}
\end{prop}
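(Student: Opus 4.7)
The plan is to verify each of the four identities separately, following the template that the text already exhibited for $y_{c+d}^{(1)}(1+y_{a+c})$. First I would expand every factor into a single cross ratio of points of the $Y$-mesh. We already have $y_{c+d}^{(1)} = -[P_{a+b+c},P_{a+2b},P_{a+b+d},P_{2a+b}]$ and $y_r = -[P_{r+a},P_{r+c},P_{r+b},P_{r+d}]$; the elementary identities
\begin{displaymath}
1 - [x_1,x_2,x_3,x_4] = [x_1,x_3,x_2,x_4], \qquad [x_1,x_2,x_3,x_4]^{-1} = [x_1,x_4,x_3,x_2]
\end{displaymath}
together with the symmetry $[x_1,x_2,x_3,x_4]=[x_4,x_3,x_2,x_1]$ immediately let me rewrite each of $1+y_r$ and $1/(1+y_r^{-1})$ as a single cross ratio of four points of $P$.

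Next, I would recast each such cross ratio in the $[P_i,L_j,P_k,L_l]$ notation, by observing that the two middle points of every relevant quadruple are intersections of the line through the two outer points with auxiliary lines of the form $L_{2x}$, $x \in S$. This works because each point $P_{r+x+y}$ lies on both $L_{r+x}$ and $L_{r+y}$; for example $P_{a+2b}$ lies on $L_{a+b} = \join{P_{a+b+c}}{P_{a+b+d}}$ and on $L_{2b}$, so it is precisely $L_{a+b}\cap L_{2b}$, exactly as used in the sample calculation. When needed, I would use the symmetry above to swap which two of the four points are considered outer, so that the remaining two always lie on one of the four doubled lines $L_{2a}, L_{2b}, L_{2c}, L_{2d}$.

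Finally, with every factor written in $[P,L,P,L]$ form, I would multiply out via the signed-distance formula
\begin{displaymath}
[P_1,L_1,P_2,L_2] = \frac{d(P_1,L_1)}{d(P_2,L_1)}\cdot\frac{d(P_2,L_2)}{d(P_1,L_2)}
\end{displaymath}
and telescope. Adjacent brackets always share a common doubled line, and the corresponding pair of distances (one from each bracket) cancels; what survives is exactly the alternating product of distances defining the claimed multi-ratio on the right-hand side. The sign works out because each $y_r$ contributes a factor $-1$ and the multi-ratio definition absorbs a global $(-1)^k$, which together reproduce the sign of the final answer.

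The main obstacle I expect is pure bookkeeping. Each of the four identities chains together up to four distinct cross ratios, and one must choose the orderings of the factors so that consecutive brackets share exactly one line $L_{2x}$ and so that the outer-point chain closes up into the desired six-point hexagon. Because all the needed cancellations are Menelaus-style and rely on nothing beyond Lemma \ref{lemmain} and the elementary cross-ratio identities above, once the first identity is written out in full the remaining three differ only in which subset of $\{1+y_{a+c},\,1+y_{b+d},\,1/(1+y_{a+d}^{-1}),\,1/(1+y_{b+c}^{-1})\}$ is multiplied in and in the corresponding routing of the four doubled lines around the final hexagon of points.
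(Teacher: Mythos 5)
Your proposal is correct and follows essentially the same route as the paper: the authors prove the first identity by exactly this chain (rewrite $y_{c+d}^{(1)}$ and $1+y_{a+c}$ as single cross ratios via $1-[x_1,x_2,x_3,x_4]=[x_1,x_3,x_2,x_4]$ and $[x_1,x_2,x_3,x_4]^{-1}=[x_1,x_4,x_3,x_2]$, pass to the $[P,L,P,L]$ form with the doubled lines $L_{2a},L_{2b},L_{2c}$, and telescope the signed distances into a six-term multi-ratio), and then declare the remaining three cases similar. Your bookkeeping plan, including using the dihedral symmetries to place the doubled points in the middle slots so consecutive brackets share a point-line pair, is precisely the omitted "similar" work.
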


\begin{proof}
The first equation is proven above, and we omit the remaining proofs which are similar.
\end{proof}

For variety, we list the two cases for which the answers are quadruple ratios.

\begin{prop}
\begin{align*}
&y_{c+d}^{(1)}(1+y_{a+c})(1+y_{b+d}) = -[P_{a+b+c}, P_{2b+c}, P_{b+c+d}, P_{b+2d}, P_{a+b+d}, P_{2a+d}, P_{a+c+d}, P_{a+2c}] \\
&y_{c+d}^{(1)}\frac{1}{(1+y_{a+d}^{-1})(1+y_{b+c}^{-1})} = -[P_{a+b+c}, P_{b+2c}, P_{b+c+d}, P_{2b+d}, P_{a+b+d}, P_{a+2d}, P_{a+c+d}, P_{2a+c}] 
\end{align*}
\end{prop}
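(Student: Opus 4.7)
The plan is to continue the distance-ratio bookkeeping from the proof of Proposition \ref{propothery}, carrying one additional factor through the computation in each identity.

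I would first rewrite each of the five elementary pieces as a four-term multi-ratio of the form $[P,L,P,L]$. Using only the cross-ratio identities $1-[x_1,x_2,x_3,x_4]=[x_1,x_3,x_2,x_4]$ and $[x_1,x_2,x_3,x_4]^{-1}=[x_1,x_4,x_3,x_2]$ to shuffle the two intersection points into positions two and four, and then identifying those intersections as $\meet{L_r}{L_{2x}}$ for an appropriate $x \in \{a,b,c,d\}$, one obtains
\begin{align*}
y_{c+d}^{(1)} &= -[P_{a+b+c},L_{2b},P_{a+b+d},L_{2a}], \\
1+y_{a+c} &= [P_{a+b+c},L_{2a},P_{a+c+d},L_{2c}], \\
1+y_{b+d} &= [P_{a+b+d},L_{2b},P_{b+c+d},L_{2d}], \\
(1+y_{a+d}^{-1})^{-1} &= [P_{a+c+d},L_{2a},P_{a+b+d},L_{2d}], \\
(1+y_{b+c}^{-1})^{-1} &= [P_{a+b+c},L_{2c},P_{b+c+d},L_{2b}].
\end{align*}

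Next I would expand each four-term multi-ratio as a product of two signed distance ratios via $[P_1,L_1,P_2,L_2] = \tfrac{d(P_1,L_1)}{d(P_2,L_1)}\cdot\tfrac{d(P_2,L_2)}{d(P_1,L_2)}$, multiply together the three factors relevant to each identity, and track cancellations. For the first identity the distance $d(P_{a+b+d},L_{2b})$ cancels between $y_{c+d}^{(1)}$ and $1+y_{b+d}$, while $d(P_{a+b+c},L_{2a})$ cancels between $y_{c+d}^{(1)}$ and $1+y_{a+c}$; an analogous pair of cancellations along $L_{2a}$ and $L_{2b}$ handles the second identity. In both cases these two cancellations reduce the product of twelve distance ratios to one of exactly eight.

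Finally, the surviving eight-distance product assembles into a single eight-term multi-ratio $[P_{i_1},L_{j_1},\ldots,P_{i_4},L_{j_4}]$. I would choose the cyclic starting point so that $P_{a+b+c}$ occupies position one, then convert each auxiliary line $L_{2x}$ back into the vertex of the $Y$-mesh at which $L_{2x}$ meets the adjacent line $L_r$, recovering the eight-term multi-ratios of vertices asserted in the statement. The overall $-$ sign on the right-hand sides comes solely from the $-$ in the expression for $y_{c+d}^{(1)}$, since every conversion between a multi-ratio and its distance-ratio expansion contributes a factor $(-1)^k = 1$ for $k$ even.

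The main obstacle is purely combinatorial: verifying that the two cancellations leave an eight-factor product whose arguments can be read off as a single cyclic sequence $(P_{i_1},L_{j_1},\ldots,P_{i_4},L_{j_4})$ with $\join{P_{i_k}}{P_{i_{k+1}}}$ meeting $L_{j_k}$ at the expected vertex of the $Y$-mesh. Beyond that, the only geometric input is the identification of each $\meet{L_r}{L_{2x}}$ with a unique vertex of the $Y$-mesh, which is guaranteed by the genericity built into Definition \ref{defmesh}.
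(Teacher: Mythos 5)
Your proposal is correct and follows exactly the method the paper demonstrates for $y_{c+d}^{(1)}(1+y_{a+c})$ (the paper omits the proofs of the remaining identities as ``similar''): rewrite each factor as a $[P,L,P,L]$ multi-ratio along the appropriate line $L_{x+y}$, splice them via the signed-distance expansion, and convert the intersections $\meet{L_{x+y}}{L_{2z}}$ back to mesh vertices. I verified your five elementary identifications and both cancellation patterns; they assemble into precisely the two stated eight-term multi-ratios with the correct sign.
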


The cases not appearing in Proposition \ref{propothery} do not correspond to any $y_{c+d}^{(k)}$ but they do have cluster significance.  Specifically, these other quantities appear in certain $Y$-seeds different from (but close in the exchange graph to) the ones reached by periodic mutation.  Understanding more distant $y$-variables is open.

There is a simple graphical representation that captures the relative positions of the points used in each multi-ratio.  Starting from some $[P_{v_1}, \ldots, P_{v_{2k}}]$, compute the differences $v_2-v_1, v_3-v_2, v_4-v_3, \ldots, v_1-v_{2m}$.  It turns out that each is either one of the six displacements from Table \ref{tabcompass}, or equals $b-a$ or $d-c$.  As such, each can be converted into a compass direction via the Table and the new rules that $b-a$ and $d-c$ correspond to Southwest and Southeast respectively.  The multi-ratio is illustrated by a cycle consisting of taking one step in each of these directions successively.  The steps $v_{2i}-v_{2i-1}$ are drawn solid and the $v_{2i+1}-v_{2i}$ dashed to differentiate which distances appear in the numerator versus the denominator.

\begin{ex}
Consider the triple ratio $[P_{a+b+c}, P_{a+2b}, P_{a+b+d}, P_{2a+d}, P_{a+c+d}, P_{a+2c}]$.  The differences in the subscripts are $b-c$, $d-b$, $a-b$, $c-a$, $c-d$, $b-c$.  The corresponding compass directions are South, East, Northeast, West, Northwest, South.  The representations for this triple ratio and several other quantities appear in Figure \ref{figcycles}.
\end{ex}

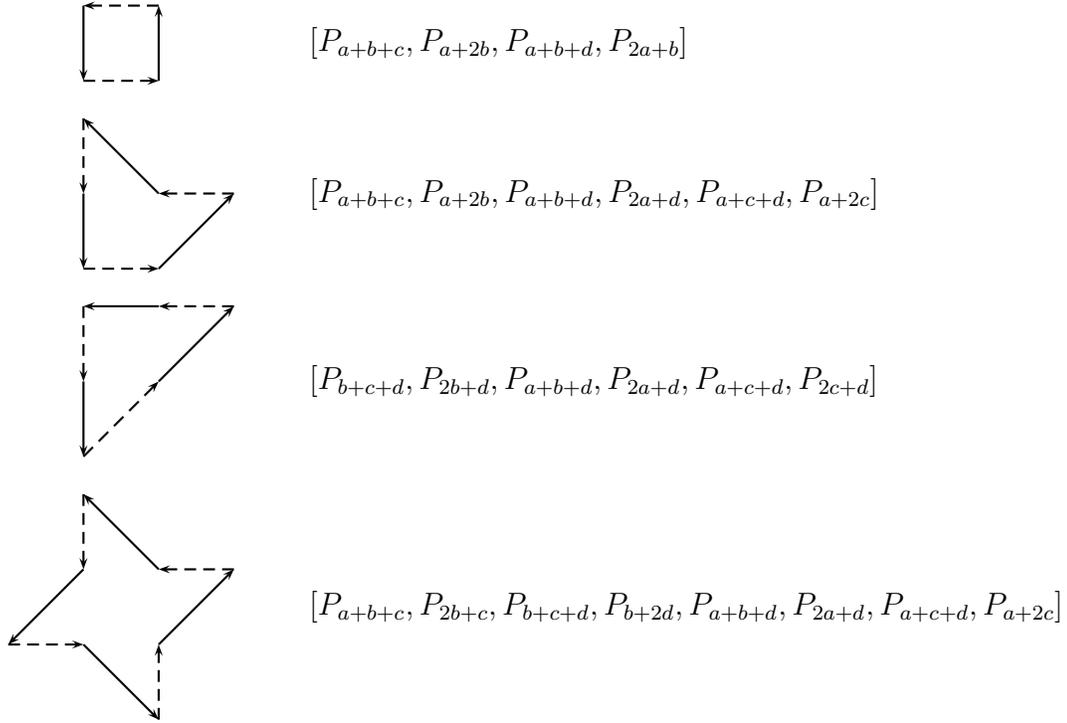
\begin{figure}
\begin{pspicture}(12,11)

\rput(-.5,8.5){
\psline{->}(1,2)(1,1)
\psline[linestyle=dashed]{->}(1,1)(2,1)
\psline{->}(2,1)(2,2)
\psline[linestyle=dashed]{->}(2,2)(1,2)
\rput[l](4,1.5){$[P_{a+b+c}, P_{a+2b}, P_{a+b+d}, P_{2a+b}]$}
}

\rput(-.5,6){
\psline{->}(1,2)(1,1)
\psline[linestyle=dashed]{->}(1,1)(2,1)
\psline{->}(2,1)(3,2)
\psline[linestyle=dashed]{->}(3,2)(2,2)
\psline{->}(2,2)(1,3)
\psline[linestyle=dashed]{->}(1,3)(1,2)
\rput[l](4,2){$[P_{a+b+c}, P_{a+2b}, P_{a+b+d}, P_{2a+d}, P_{a+c+d}, P_{a+2c}]$}
}

\rput(-.5,3.5){
\psline{->}(1,2)(1,1)
\psline[linestyle=dashed]{->}(1,1)(2,2)
\psline{->}(2,2)(3,3)
\psline[linestyle=dashed]{->}(3,3)(2,3)
\psline{->}(2,3)(1,3)
\psline[linestyle=dashed]{->}(1,3)(1,2)
\rput[l](4,2){$[P_{b+c+d}, P_{2b+d}, P_{a+b+d}, P_{2a+d}, P_{a+c+d}, P_{2c+d}]$}
}

\rput(-.5,0){
\psline{->}(1,3)(0,2)
\psline[linestyle=dashed]{->}(0,2)(1,2)
\psline{->}(1,2)(2,1)
\psline[linestyle=dashed]{->}(2,1)(2,2)
\psline{->}(2,2)(3,3)
\psline[linestyle=dashed]{->}(3,3)(2,3)
\psline{->}(2,3)(1,4)
\psline[linestyle=dashed]{->}(1,4)(1,3)
\rput[l](4,2.5){$[P_{a+b+c}, P_{2b+c}, P_{b+c+d}, P_{b+2d}, P_{a+b+d}, P_{2a+d}, P_{a+c+d}, P_{a+2c}]$}
}
\end{pspicture}
\caption{Cycles representing the formulas for some of the $y$-variables.}
\label{figcycles}
\end{figure}

\begin{rmk}
The $y_{i,j}$ with $0 \leq j < m$ can be taken as initial conditions for the recurrence \eqref{eqmain}.  Interpreting these variables as cross ratios, they can be thought of roughly as coordinates on the space of $Y$-meshes of type $S$.  However, from a cluster algebra point of view, it is more natural to consider a collection of variables that comprise a $Y$-seed, for instance
\begin{displaymath}
\{y_{i,j}^{(m-j)}: i,j \in \mathbb{Z}, 0 \leq j < m\}.
\end{displaymath}
Such coordinates possess nice properties, such as being log canonical with respect to a compatible Poisson bracket (see \cite{GSV}).  An upshot of the results of this section is they provide a concrete description of these coordinates.
\end{rmk}

We conclude this section by describing its results in the context of the lifted quiver $\tilde{Q}_S$ defined in Section \ref{seclift}.  Begin from $\tilde{Q}_S$ and perform some mutations, all at vertices of degree 4.  It is not hard to show that the local structure around each vertex $(i,j)$ will satisfy the following properties:
\begin{itemize}
\item $(i,j)$ is connected by an arrow to each of its four neighbors $(i \pm 1, j), (i, j \pm 1)$.
\item $(i,j)$ is not connected by an arrow to any other vertices, except perhaps some of the $(i \pm 1, j \pm 1)$.
\item The arrows incident to $(i,j)$ in cyclic order alternate between incoming and outgoing. 
\end{itemize}

There are sixteen possible local configurations around $(i,j)$ that are consistent with the above rules.  The sixteen types of $y$-variables described in this section are precisely those that arise via mutations in $\tilde{Q}_S$ at degree 4 vertices, and the type of variable at each vertex is determined by the local configuration of the quiver.  

We have observed, but can only prove case by case, that the cycles representing the various multi-ratios are related to these local configurations.  More precisely, given a $y$-variable at some vertex $(i,j) \in \mathbb{Z}^2$, it can be expressed as a multi-ratio which is represented by some cycle.  The solid arrows of the cycle end up matching the outgoing arrows from $(i,j)$ and the dashed arrows match the incoming arrows.  For example, one can obtain a variable $y^{(1)}_r(1+y_{r+a-d})$ by a single mutation in $\tilde{Q}_S$.  The variable will be at a vertex with outgoing arrows pointing South, Northeast, and Northwest, and incoming arrows pointing East, West, and South.  These six arrows can be placed head to tail in an interlacing fashion to obtain the corresponding cycle, the second from the top in Figure \ref{figcycles}. 

\medskip

\textbf{Acknowledgments.} We thank Boris Khesin, Gregg Musiker, and Bernd Sturmfels for comments and suggestions regarding a previous draft of this paper.

\end{document}